\documentclass[12pt]{amsart}
\usepackage{ amsmath, amsthm, amsfonts, amssymb, color}
 \usepackage{mathrsfs}
\usepackage{amsfonts, amsmath}
 \usepackage{amsmath,amstext,amsthm,amssymb,amsxtra}
 \usepackage{txfonts}
 \usepackage[colorlinks, citecolor=blue,pagebackref,hypertexnames=false]{hyperref}
 \allowdisplaybreaks
 \usepackage{pgf,tikz}
 \usepackage{fancybox}
\usepackage{tikz}
\usepackage{graphicx}
\usepackage{graphics}

 \textheight=23.6cm
 \textwidth = 6.38 true in
\marginparsep=0cm
\oddsidemargin=-0.0cm
\evensidemargin=0.0cm
\headheight=13pt
\headsep=0.8cm
\parskip=0pt
\hfuzz=6pt
\widowpenalty=10000
 \setlength{\topmargin}{-0.6cm}

\begin{document}
\baselineskip 17pt
\hfuzz=6pt

\newtheorem{theorem}{Theorem}[section]
\newtheorem{proposition}[theorem]{Proposition}
\newtheorem{coro}[theorem]{Corollary}
\newtheorem{lemma}[theorem]{Lemma}
\newtheorem{definition}[theorem]{Definition}
\newtheorem{example}[theorem]{Example}
\newtheorem{remark}[theorem]{Remark}
\newcommand{\ra}{\rightarrow}
\renewcommand{\theequation}
{\thesection.\arabic{equation}}
\newcommand{\ccc}{{\mathcal C}}
\newcommand{\one}{1\hspace{-4.5pt}1}

\newtheorem*{TheoremA}{Theorem A}

\newtheorem*{TheoremB}{Theorem B}

 \def \Lips  {{   \Lambda}_{L}^{ \alpha,  s }(X)}
\def\BL {{\rm BMO}_{L}(X)}
\def\HAL { H^p_{L,{at}, M}(X) }
\def\HML { H^p_{L, {mol}, M}(X) }
\def\HM{ H^p_{L, {mol}, 1}(X) }
\def\Ma { {\mathcal M} }
\def\MM { {\mathcal M}_0^{p, 2, M, \epsilon}(L) }
\def\dMM { \big({\mathcal M}_0^{p,2, M,\epsilon}(L)\big)^{\ast} }
  \def\RR {  {\mathbb R}^n}
\def\HSL { H^p_{L, S_h}(X) }
\newcommand\mcS{\mathcal{S}}
\newcommand\mcB{\mathcal{B}}
\newcommand\D{\mathcal{D}}
\newcommand\C{\mathbb{C}}
\newcommand\N{\mathcal{N}}
\newcommand\R{\mathbb{R}}
\newcommand\Rf{\mathfrak{R}}
\newcommand\G{\mathbb{G}}
\newcommand\T{\mathbb{T}}
\newcommand\Z{\mathbb{Z}}
\newcommand\lp{L^p(M,\mu)}

\newcommand\CC{\mathbb{C}}
\newcommand\NN{\mathbb{N}}
\newcommand\ZZ{\mathbb{Z}}

\newcommand\bmo{{\rm BMO}_L(M)}

\renewcommand\Re{\operatorname{Re}}
\renewcommand\Im{\operatorname{Im}}

\newcommand{\mc}{\mathcal}

\def\SL{\sqrt[m] L}
\newcommand{\la}{\lambda}
\def \l {\lambda}
\newcommand{\eps}{\varepsilon}
\newcommand{\pl}{\partial}
\newcommand{\supp}{{\rm supp}{\hspace{.05cm}}}
\newcommand{\x}{\times}
\newcommand{\mar}[1]{{\marginpar{\sffamily{\scriptsize
        #1}}}}

\newcommand\wrt{\,{\rm d}}

\title[BMO space associated with operators on manifolds with ends ]
{BMO spaces associated to operators with generalised Poisson bounds on non-doubling manifolds with ends}

\author[P. Chen, X.T. Duong, J. Li, L. Song and L.X. Yan]{Peng Chen, \ Xuan Thinh Duong, \
 Ji Li, \ Liang Song \ and \ Lixin Yan
}
\address{Peng Chen, Department of Mathematics, Sun Yat-sen (Zhongshan)
University, Guangzhou, 510275, P.R. China}
\email{achenpeng1981@163.com}
\address{Xuan Thinh Duong, Department of Mathematics, Macquarie University, NSW 2109, Australia}
\email{xuan.duong@mq.edu.au}
\address{Ji Li, Department of Mathematics, Macquarie University, NSW, 2109, Australia}
\email{ji.li@mq.edu.au}
\address{Liang Song, Department of Mathematics, Sun Yat-sen (Zhongshan)
University, Guangzhou, 510275, P.R. China}
\email{songl@mail.sysu.edu.cn}
\address{
Lixin Yan, Department of Mathematics, Sun Yat-sen (Zhongshan) University, Guangzhou, 510275, P.R. China}
\email{mcsylx@mail.sysu.edu.cn
}

\date{\today}
\subjclass[2010]{42B35, 42B20, 35K08,  46E15}
\keywords{BMO spaces, non-doubling manifold with ends, John-Nirenberg inequality, interpolation, singular integrals }

\begin{abstract}
	Consider  a non-doubling manifold with ends $M = \mathfrak{R}^{n}\sharp\,  {\R}^{m}$
	where $\mathfrak{R}^n=\mathbb{R}^n\times \mathbb{S}^{m-n}$ for $m> n \ge 3$.
	We say that an operator $L$ has a generalised Poisson kernel if $\sqrt{ L}$ generates a semigroup $e^{-t\sqrt{L}}$
	whose kernel $p_t(x,y)$ has an upper bound similar to the kernel of $e^{-t\sqrt{\Delta}}$ where $\Delta$ is the Laplace-Beltrami
	operator on $M$. An example for operators with generalised Gaussian bounds is the Schr\"odinger operator $L = \Delta + V$
	where $V$ is an arbitrary non-negative locally integrable potential. In this paper, our aim is to introduce the BMO space $\bmo$ associated
	to operators with generalised Poisson bounds which serves as an appropriate setting for certain singular integrals with
	rough kernels to be bounded from $L^{\infty}(M)$ into this new $\bmo$. On our $\bmo$ spaces, we show that the
	John--Nirenberg inequality holds  and
	we show an interpolation theorem for a holomorphic family of operators which interpolates between	 $L^q(M)$
	 and $\bmo$. As an application, we show that the holomorphic functional calculus $m(\sqrt{L})$ is bounded
	from $L^{\infty}(M)$ into $\bmo$, and bounded on $L^p(M)$ for $1 < p < \infty$.

\end{abstract}

\maketitle

 %\tableofcontents

\section{Introduction}
\setcounter{equation}{0}

\subsection{Background and statement of main results}
The space BMO of functions of bounded mean oscillation   on ${\RR}$,  which was
%(space of functions satisfying a Bounded Mean Oscillation) was
 originally introduced by John and Nirenberg \cite{JN}  in the context
of partial differential equations, has been identified as the dual of classical Hardy space $H^1$ in the celebrated
work by Fefferman and Stein \cite{FS}.
Since then  the BMO function space and its predual $H^1$
are considered as the natural substitutions for the Lebesgue spaces $L^{\infty}$ and $L^{1}$ respectively in the
study of singular integrals and they
  are well established for spaces of homogeneous type $(X, d, \mu)$, i.e.   the underlying measure  $\mu$ satisfies the doubling
  (volume) property
  % when the space $X$ satisfies the doubling (volume) property
\begin{equation}\label{Doubing}
\mu(B(x, 2r)) \le C \mu (B(x,r))
\end{equation}
for all the balls $B(x, r)$  with centre $x$ and radius $r$.
In this case,
assume that a singular integral operator $T$ is bounded on $L^2(X)$ and its associated kernel $k(x,y)$ satisfies the well known
H\"ormander condition, i.e. there exist constants $C>0$ and $c>1$ so that
\begin{equation} \label{Hormander2}
\int_{d(x,y) \ge c d(x,x_1)} |k(x,y) - k(x_1,y)| \,d\mu (y) \le C\nonumber
\end{equation}
for all $x, x_1\in X$ and
\begin{equation} \label{Hormander1}
\int_{d(x,y) \ge c d(y,y_1)} |k(x,y) - k(x,y_1)| \,d\mu (x) \le C\nonumber
\end{equation}
for all $y, y_1\in X$,  then $T$ is bounded from $L^{\infty}(X)$ into the  space ${\rm BMO}(X)$  and from its predual $H^1(X)$
into $L^1(X)$. By  interpolation, $T$ is bounded on $L^p(X)$ for
all $1 < p < \infty$ (see \cite{CW, Chr}). For various applications of BMO,
we refer the reader to     Stein's monograph  \cite[Chapter IV]{St2}  and references therein.

In the last two decades, the study of singular integrals beyond the Calder\'on--Zygmund operators has been extensive
and carried out in two directions: singular integrals on non-doubling spaces and singular integrals with rough kernels.

 \smallskip

{\it (i) Singular integrals on non-doubling spaces.}
The doubling property of the
underlying measure is a basic condition in the classical Calder\'on-Zygmund
theory of harmonic analysis. Recently, more attention has been paid to non-doubling
measures. The works of Nazarov, Treil, Volberg, Tolsa, H\"ytonen and others
have shown that a number of estimates for singular integrals for doubling spaces are still true for non-homogeneous spaces,
i.e. when the space $(X,d, \mu)$ might be non-doubling but it satisfies a polynomial bound on volume growth:
\begin{equation}\label{Non-homo}
\mu(B(x, r)) \le C r^m.
\end{equation}
 See for example, \cite{NTV1, NTV2, Tol6}.

The   BMO space for non-homogeneous spaces was introduced and studied in \cite{NTV, MMNO, Tol, Hyt, HYY, BD}.
However, we note that for the regularized BMO spaces introduced and studied by Tolsa, Hytonen, Bui, Duong and others,
a sufficient condition for an $L^2$ bounded operator $T$  to be bounded from $L^{\infty}(X)$ into the regularized $BMO(X)$
is that the associated kernel $k(x,y)$ of $T$ satisfies the  upper bound
$$ | k(x,y) |  \le {C \over d(x,y)^m }$$
where $m$ is the upper bound on the dimension in (\ref{Non-homo}) and that $k(x,y)$ is H\"older continuous in variable $x$, i.e.
$$| k(x,y) - k(x_1, y) |  \le {C d(x, x_1)^{\alpha} \over d(x,y)^{m+\alpha}} $$
for some $\alpha > 0$ when $d(x,y) \ge c d(x, x_1)$.
%See \cite{Tol, Hyt}.

Note that the above conditions with $d(x,y)^m$ and $d(x,y)^{m+\alpha}$ on the upper bounds of $|k(x,y)|$ and $| k(x,y) - k(x_1, y) |$ respectively,
are quite strong since in general $d(x,y)^m$ is strictly greater than the volume of the ball with radius $r = d(x,y)$ as in
the standard case of spaces of homogeneous type. Indeed, these required estimates do not hold for large classes of singular
operators on certain non-homogeneous spaces.

%We refer the reader to \cite{NTV, MMNO, Tol1, BD} for more properties of  the BMO spaces  on non doubling measures.

 \smallskip

{\it(ii) Singular integrals with rough kernels.}
It is now understood that there
are important situations in which
the classical Calder\'on-Zygmund theory is not
applicable, and these situations are tied to the theory of partial differential
operators   generalizing the Laplacian (e.g., the Schr\"odinger operators $L=-\Delta +V$).
Operators based on the operator $L$, such as the Riesz transform, the pure imaginary
powers $L^{is}$, $s\in {\mathbb R}$ or square functions, may lie beyond the scope of the Calder\'on-Zygmund
theory,   whose kernels do not
satisfy
  the H\"ormander condition.
  Weak type $(1,1)$ estimate was obtained for some of these operators under weaker condition than the
H\"ormander condition, see \cite{CD, DM}.
The study of singular integrals with rough kernels also
lead to the   BMO  space  associated to operators which has been a very successful approach
in recent progress of harmonic analysis.  The main feature of this  BMO space is that
it is  adapted to the operator $L$ through the heat semigroup $e^{-t {L}}$ (or the Poisson semigroup $e^{-t\sqrt{L}}$)
which plays the role of the (generalised) approximation to the identity.
This topic  has attracted a lot of attention in the last   decades,
 and  has been a  very active research topic  in harmonic analysis -- see for example,
    \cite{Aus, ADM, ACDH, AMR, CD, CD2, CL, DL, DM, DY1, DY2, HLMMY, HM} and the references therein.

	 \smallskip
	
	 The present paper can be viewed as a continuation of the above body of work to
	 introduce a space of type
 BMO which is adapted to    operators to
	  study certain singular integrals with rough kernels on some non-doubling spaces.
 Our model of non-homogeneous spaces is  to   consider finite connected sums of the $\mathfrak{R}^n$
and $\R^m$
 %the non-doubling manifolds with ends
$$
    M= \mathfrak{R}^{n}\sharp\,  {\R}^{m}.
$$
 for $m> n \ge 3$, where   the manifold $\mathfrak{R}^n$  is given by
$$
 \mathfrak{R}^n=\mathbb{R}^n\times \mathbb{S}^{m-n}.
$$
Here $\mathbb{S}^{m-n}$ is the unit sphere in $\R^{m-n}$. On such a manifold, there is a compact set $K$ with
smooth boundary which connects $\mathfrak{R}^n$ and $\R^m$. It is called the center of $M$. In \cite{GS1,  GS}, Grigor'yan and Saloff-Coste
started a project on the heat kernel bounds for the  heat semigroup $e^{-t\Delta}$ generated by the
Laplace-Beltrami operator $\Delta$ on $M $
  and
obtained  sharp upper bound and lower estimates of heat kernels on $M$, see also  \cite{ GS2}.
The bound of the heat kernel is essentially different from the classical Gaussian upper bound and it depends heavily on the distance of the points $x,y$ from the center part $K$  (see Theorem A in Section 2).
 However, no further information is known on the heat kernels such as estimates on the time derivatives
or the spacial derivatives of the heat kernels.

 Our model case of a differential  operator on underlying space $M$   is the Schr\"odinger operator $L = \Delta + V$ where
 $V$ is a non-negative locally integrable potential.   From the work of  Grigor'yan and Saloff-Coste \cite{GS}, the semigroup $e^{-tL}$
 has the same upper bound as the semigroup $e^{-t {\Delta}}$,
 %the only known lower bound for the kernels   of $e^{-t\sqrt{L}}$ is zero and
however,  the kernel of $e^{-t {L}}$ can be discontinuous due to the effect of the
 potential $V$. Therefore standard pointwise estimates on the spacial derivatives of the kernel of $e^{-t {L}}$
  are not true and operators like $L^{is}, s\in \mathbb R$ do have rough kernels.

  %See Section 5.

 Throughout the paper,  let $e^{-t \sqrt L}$ be the Poisson semigroup of $L$ on $M$, and
   $A_t f(x)=e^{-t \sqrt L}f(x)$.  Let $\mathcal T$ be the set of functions defined as
$$ \mathcal T:=\{ f\in L^1_{loc}(M): \sup_{x\in M, t>0} |A_t(f)(x)|<\infty \} .
$$
We are now ready to introduce the space  $\bmo$
associated to the operator $L$ on $M$ in the following way:

%where $A_t f(x)=e^{-t \sqrt L}f(x)$.
\begin{definition}\label{def-BMO0}
We say that $f\in \mathcal T$ is in $\bmo$, the {\rm BMO} space associated with $L$, if there exists constant $C>0$ such that

\begin{itemize}
\item[(i)] For every ball $B(x_B,r_B)\subset \R^m$ or $B(x_B,r_B)\subset \Rf^n$,
\begin{eqnarray}\label{BMO-def10}
\frac{1}{\mu(B)}\int_B |f(x)-A_{r_B}f(x)| \,d\mu(x)\leq C;
\end{eqnarray}

\item[(ii)] For all $x\in M$ and $s,t>0$,
\begin{eqnarray}\label{BMO-def20}
|A_s(f)(x)-A_t(f)(x)|\leq C\left(1+\big|\log\frac{s}{t}\big|\right).
\end{eqnarray}
\end{itemize}
When $f\in \bmo$, we define the $\bmo$ norm by the infimum of all the constants $C$ such that \eqref{BMO-def10} and \eqref{BMO-def20} hold.
\end{definition}
%Note that the behaviour of heat kernel $p_t(x,y)$ on the manifold with ends when $x,y\in \Rf^n$ or $x,y\in \R^m$ is quite similar to the classical heat kernel on $\Rf^n$ or $\R^m$; see \cite{GS} or Theorem~\ref{} in Section 2.
Note that $A_t$ acts as an approximation to identity, so the condition~\eqref{BMO-def10} is what we can expect.
The condition~\eqref{BMO-def20} is also quite natural on homogeneous spaces (with doubling measure) if we want to get the John-Nirenberg inequality and other properties for BMO spaces associated with operators; see \cite{DY1}. See \cite{Tol} for a similar definition for BMO spaces with non-doubling measure, where a similar condition as \eqref{BMO-def20} was proposed with $A_t$ replaced by the average over balls.

We also note that our definition of $\bmo$ mainly focuses on the behaviour of Poisson kernel, but not the non-doubling property. So
comparing with the original definition of BMO space on non-doubling spaces introduced in \cite{Tol}, we do not use $\mu(\alpha B)$ (for some $\alpha>1$) in
\eqref{BMO-def10}.
%
%even with some kind of non-doubling balls (in the meaning of \cite{Tol}) $B\in \Rf^n$, we still use $\mu(B)$, not $\mu(\alpha B)$ for some $\alpha>1$.
%
%The interesting part is what the estimate is for other balls, that is $K\subset B$.
Then the natural question is that how  we deal with the non-doubling measure.  In fact, we handle the non-doubling
measure by making good use of the upper bound of Poisson kernel (as in Proposition \ref{prop2.1}) and by a new classification of the balls in $M$ as follows.

We classify all balls into two classes.
Denote by $r_{B^m}$ the radius of $B\cap \R^m$. For a fixed $0<\rho<n$, we define a set $\mathcal{B}^\rho_0$ of balls
\begin{eqnarray}\label{b1}
\mathcal{B}^\rho_0:=\{B(x_B,r_B): x_B\in \Rf^n, r_B\geq 2, K\subset B,  r_B^{\frac{n-\rho}{m-\rho}}<r_{B^m}<r_B \}.
\end{eqnarray}
Denote $\mathcal{B}^\rho_1$ the set of all other balls, that is,
\begin{eqnarray}\label{b2}
\mathcal{B}^\rho_1:=\{B(x_B,r_B): B\notin \mathcal{B}^\rho_0\}.
\end{eqnarray}
Throughout the paper  we write $\mathcal{B}_0$ and $\mathcal{B}_1$ in place of  $\mathcal{B}^1_0$ and $\mathcal{B}^1_1$, respectively.
It can be seen that  the classification   of these balls  $\mathcal{B}^\rho_0$ and $\mathcal{B}^\rho_1$ plays an important  role
in our approach.

With this classification of the balls in $M$, we establish the following equivalent characterization of the space $\bmo$
by  providing another definition of $\bmo$ where a new and explicit term ``$\log r_B$'' is  introduced for the balls in  $\mathcal{B}^\rho_0$.
To be more precise, we have
\begin{definition}\label{def-BMO}  Suppose $0<\rho<n$.
We say that $f\in \mathcal T$ is in ${\rm BMO}_{L}^{\rho}(M)$, the {\rm BMO} space associated with $L$, if there exists constant $C>0$ such that:\\
for all $B\in \mathcal{B}^\rho_1$,
\begin{eqnarray}\label{BMO-def1}
\frac{1}{\mu(B)}\int_B |f(x)-A_{r_B}f(x)| \,d\mu(x)\leq C;
\end{eqnarray}
for all $B\in \mathcal{B}^\rho_0$,
\begin{eqnarray}\label{BMO-def2}
\frac{1}{\mu(B)\log r_B}\int_B |f(x)-A_{r_B}f(x)| \,d\mu(x)\leq C.
\end{eqnarray}
When $f\in {\rm BMO}_L^\rho(M)$, we define the ${\rm BMO}_L^\rho(M)$ norm by
$$
\|f\|_{{\rm BMO}_L^\rho(M)}:=\max\left\{\sup_{B\in \mathcal{B}^\rho_1} \frac{1}{\mu(B)}\int_B |f(x)-A_{r_B}f(x)| \, d\mu(x),
\sup_{B\in \mathcal{B}^\rho_0} \frac{1}{\log r_B\mu(B)}\int_B |f(x)-A_{r_B}f(x)| \, d\mu(x)\right\}
$$
with $r_B$  the radius of the ball $B\subset M$.
\end{definition}

Then we prove that the versions of BMO spaces as in Definitions \ref{def-BMO0} and \ref{def-BMO} are equivalent.
\begin{theorem}\label{main0}
For every $0<\rho<n$, the spaces
 $  {\rm BMO}_L^\rho(M) $ and $ \bmo $  coincide
and they have  equivalent norms.
\end{theorem}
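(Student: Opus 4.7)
The proof splits into the two containments $\bmo\subset{\rm BMO}_L^\rho(M)$ and ${\rm BMO}_L^\rho(M)\subset\bmo$. A first observation that applies throughout: any ball $B\subset\R^m$ or $B\subset\Rf^n$ automatically lies in $\mathcal{B}^\rho_1$, since the central set $K$ has nontrivial intersection with both ends of $M$ and so is never contained in a single-end ball. Consequently \eqref{BMO-def10} is literally the restriction of \eqref{BMO-def1} to single-end balls, and the real work is to transfer between the pointwise semigroup estimate \eqref{BMO-def20} and the integral oscillation estimates \eqref{BMO-def1}, \eqref{BMO-def2} on cross-centre balls.

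For the containment $\bmo\subset{\rm BMO}_L^\rho(M)$, I take a cross-centre ball $B$ with, say, $x_B\in\Rf^n$, $K\subset B$, $r=r_B$, $r'=r_{B^m}$, and split
\[
\int_B|f-A_rf|\,d\mu\le\int_{B\cap\Rf^n}|f-A_rf|\,d\mu+\int_{B\cap\R^m}|f-A_{r'}f|\,d\mu+\int_{B\cap\R^m}|A_{r'}f-A_rf|\,d\mu.
\]
The first two terms fall under the single-end oscillation bound \eqref{BMO-def10} applied to the balls of radii $r$ and $r'$ in the respective ends, and the last term is controlled pointwise by \eqref{BMO-def20} as $\mu(B\cap\R^m)(1+\log(r/r'))$. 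For $B\in\mathcal{B}^\rho_1$ the defining inequality $r'\le r^{(n-\rho)/(m-\rho)}$ together with the strict bound $m(n-\rho)/(m-\rho)<n$ (valid since $\rho>0$ and $m>n$) makes $\mu(B\cap\R^m)\log(r/r')\lesssim\mu(B)$, yielding \eqref{BMO-def1}. For $B\in\mathcal{B}^\rho_0$ the reverse inequality $r'>r^{(n-\rho)/(m-\rho)}$ forces $\log(r/r')\le\tfrac{m-n}{m-\rho}\log r$, producing exactly the $\log r_B$ factor in \eqref{BMO-def2}. The symmetric case $x_B\in\R^m$ is handled identically and always lies in $\mathcal{B}^\rho_1$.

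For ${\rm BMO}_L^\rho(M)\subset\bmo$, condition \eqref{BMO-def10} is automatic from the opening observation. For the pointwise estimate \eqref{BMO-def20} with $s<t$, I telescope along dyadic scales $s_j=2^j s$ up to $s_N\asymp t$, using $A_{s_{j+1}}=A_{s_j}\circ A_{s_j}$ to rewrite each step as
\[
A_{s_j}f(x)-A_{s_{j+1}}f(x)=\int p_{s_j}(x,y)\bigl(f(y)-A_{s_j}f(y)\bigr)\,d\mu(y).
\]
Split the integral into $B(x,s_j)$ and dyadic annuli $B(x,2^{k+1}s_j)\setminus B(x,2^k s_j)$. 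On $B(x,s_j)$ the oscillation bound \eqref{BMO-def1} or \eqref{BMO-def2} applies at the matching scale $r_B=s_j$, and on the $k$-th annulus the off-diagonal Poisson-kernel decay supplied by Proposition \ref{prop2.1} provides a geometric weight that sums against the corresponding oscillation over $B(x,2^{k+1}s_j)$. This gives an $O(1)$ bound per telescope step, and summing the $N\asymp\log(t/s)$ steps yields \eqref{BMO-def20}.

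The principal obstacle is Direction 2 at those scales $s_j$ for which $B(x,s_j)\in\mathcal{B}^\rho_0$: the matching-scale oscillation then costs a factor $\log s_j$ rather than $O(1)$, and a naive accumulation would give $(\log t)^2$ instead of the target $\log(t/s)+1$. The resolution is to exploit the narrow geometric window defining $\mathcal{B}^\rho_0$ together with the quantitative off-diagonal Poisson decay of Proposition \ref{prop2.1}, which provides enough geometric weight to absorb the extraneous logarithms into an $O(1)$ contribution per scale. Conceptually, the classification $\mathcal{B}^\rho_0$ versus $\mathcal{B}^\rho_1$ and the specific exponent $(n-\rho)/(m-\rho)$ were designed precisely so that these logarithmic contributions balance on both sides of the equivalence, and the independence of the final result from $\rho\in(0,n)$ expresses exactly this balance.
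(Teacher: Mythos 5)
Your proposal follows essentially the same two-containment strategy as the paper's proof, with the same key ingredients: for $\bmo\subset{\rm BMO}_L^\rho(M)$ you cover cross-centre balls by single-end pieces, apply \eqref{BMO-def10} at the two different radii, and control the mismatch term $|A_{r'}f-A_rf|$ via \eqref{BMO-def20}, using the exponent $(n-\rho)/(m-\rho)$ and the negativity of $m(n-\rho)/(m-\rho)-n=-\rho(m-n)/(m-\rho)$ to absorb the logarithm for $\mathcal{B}^\rho_1$ balls and to isolate the $\log r_B$ factor for $\mathcal{B}^\rho_0$ balls; for ${\rm BMO}_L^\rho(M)\subset\bmo$ you reduce to an $O(1)$ step estimate $|A_sf(x)-A_{2s}f(x)|\lesssim\|f\|_{{\rm BMO}_L^\rho}$ via the semigroup identity and telescope, exactly as the paper reduces to $|A_sf-A_{s+t}f|\lesssim\|f\|_{{\rm BMO}_L^\rho}$ for $s/4<t\le s$. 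You also correctly identify the crux of Direction~2: the cross-end piece of the Poisson kernel at large scales forces you onto balls in $\mathcal{B}^\rho_0$ near $K$ (the paper's $B(z_k,t)$), costing a $\log t$ which is then killed by the geometric factor $t^{\frac{n-\rho}{m-\rho}(n-m)}$ coming precisely from the threshold $r_B^{(n-\rho)/(m-\rho)}$; this is exactly the paper's computation in the term $II_{22}$. Your sketch is of course lighter on detail — e.g.\ you don't spell out covering the annuli with balls of the matching scale $s_j$ rather than of radius $2^ks_j$ (which is needed to apply the oscillation estimate without a scale mismatch), nor the slight asymmetry when $x_B\in\R^m$ where $\mu(B)\approx r_B^m$ and one instead uses boundedness of $x\mapsto x^n(1+|\log x|)$ on $(0,1]$ — but the route is the paper's route and no genuine gap appears.
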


%For every $0<\rho<n,$
%
%% associated with $L$ on $M$.
%%\begin{proposition}\label{def-BMO}
% we have that $f\in \mathcal T$ is in $\bmo$ if and only  if there exists constant $C>0$ such that
%\begin{eqnarray}%\label{BMO-def1}
% \sup_{B\in \mathcal{B}^\rho_1} {1\over \mu(B) }\int_B |f(x)-A_{r_B}f(x)|d\mu(x)\leq C\nonumber;
% \end{eqnarray}
%and
%\begin{eqnarray}%\label{BMO-def2}
% \sup_{B\in \mathcal{B}^\rho_0} {1\over \mu(B)\log r_B } \int_B |f(x)-A_{r_B}f(x)|d\mu(x)\leq C\nonumber.
%\end{eqnarray}
%See Theorem~\ref{main0} below.
%%\end{proposition}

With the  above characterization of the space $\bmo$, we can prove
  the John--Nirenberg type inequality for $\bmo$.
\begin{theorem}\label{main1} Let $\mathcal{B}_0$ and $\mathcal{B}_1$ be the set of balls defined in
\eqref{b1} and \eqref{b2}, respectively.
If $f\in {{\rm BMO}_L(M)}$, then there exist positive constants $c_1$ and $c_2$ such that

\begin{itemize}
\item[(i)]
For every ball $B\in \mathcal{B}_1$ and every $\alpha>0$, we have
\begin{eqnarray}\label{eq44.1}
\mu\big(\{x\in B:|f(x)-A_{r_B}f(x)|>\alpha\}\big)\leq c_1\mu(B)\exp\left(-\frac{c_2\alpha}{\|f\|_{\bmo}}\right);
\end{eqnarray}

\item[(ii)]For every ball $B\in \mathcal{B}_0$ and every $\alpha>0$, we have
\begin{eqnarray}\label{eq44.11}
\mu\big(\{x\in B:|f(x)-A_{r_B}f(x)|>\alpha\}\big)\leq c_1\mu(B)\exp\left(-\frac{c_2\alpha}{\big(\log r_B\big)\|f\|_{\bmo}}\right).
\end{eqnarray}
\end{itemize}
%where $t_B=r_B$.

As a consequence,  we    have  that for  all $1\leq p<\infty$,
\begin{align*}
&\|f\|_{\bmo}\\
&\sim\max\left\{\sup_{B\in \mathcal{B}_1} \left(\frac{1}{\mu(B)}\int_B |f(x)-A_{r_B}f(x)|^p \, d\mu(x)\right)^{1/p},
\sup_{B\in \mathcal{B}_0} \frac{1}{\log r_B}\left(\frac{1}{\mu(B)}\int_B |f(x)-A_{r_B}f(x)|^p \, d\mu(x)\right)^{1/p}\right\}.
\end{align*}
\end{theorem}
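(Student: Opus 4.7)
The plan is to follow the Duong--Yan iteration scheme (see~\cite{DY1}) for John--Nirenberg inequalities on BMO spaces associated to semigroups, adapted to the present non-doubling setting by exploiting the equivalent characterization of $\bmo$ in Theorem~\ref{main0} together with the dichotomy $\mathcal{B}_0 / \mathcal{B}_1$ of balls.

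For part~(i), I would normalize $\|f\|_{\bmo} = 1$, fix $B \in \mathcal{B}_1$, and let $T$ be a large constant to be chosen. The first step is a stopping-time decomposition inside $B$: extract a maximal disjoint family $\{B_j\}$ of sub-balls on which the average of $|f - A_{r_B}f|$ first exceeds $T$, so that $|f - A_{r_B}f| \le T$ holds a.e.\ outside $\bigcup_j B_j$ and $\sum_j \mu(B_j) \le T^{-1}\mu(B)$. On each $B_j$ I would split
$$
f - A_{r_B}f = (f - A_{r_{B_j}}f) + (A_{r_{B_j}}f - A_{r_B}f),
$$
and note that the first summand has BMO-bounded average on $B_j$ (here $B_j \in \mathcal{B}_1$, since stopping balls are strictly smaller than $B$ and miss the narrow radius window defining $\mathcal{B}_0$ in~\eqref{b1}). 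Consequently the second summand has average $\gtrsim T$ on $B_j$, which, coupled with the pointwise estimate $|A_{r_{B_j}}f - A_{r_B}f| \le C(1 + |\log(r_{B_j}/r_B)|)$ from Definition~\ref{def-BMO0}(ii), forces $|A_{r_{B_j}}f - A_{r_B}f| \le CT$ pointwise on $B_j$. Therefore
$$
\{x \in B_j : |f - A_{r_B}f| > (C+1)T\} \subseteq \{x \in B_j : |f - A_{r_{B_j}}f| > T\},
$$
and iterating this Calder\'on--Zygmund procedure over the stopping sub-balls produces the exponential decay~\eqref{eq44.1}.

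For part~(ii), fix $B \in \mathcal{B}_0$, so $r_B \ge 2$ and the center $K \subset B$. Cover $B$ by balls $\{B_i\}$ of radius $1$ with bounded overlap; each $B_i$ automatically lies in $\mathcal{B}_1$ since $r_{B_i} = 1 < 2$. For every $x \in B$,
$$
|f(x) - A_{r_B}f(x)| \le |f(x) - A_1 f(x)| + |A_1 f(x) - A_{r_B}f(x)|,
$$
and the second term is pointwise at most $C(1 + \log r_B)$ by Definition~\ref{def-BMO0}(ii). Thus, for $\alpha > 2C(1+\log r_B)$, applying part~(i) on each $B_i$ yields
$$
\mu\bigl(\{x \in B : |f - A_{r_B}f| > \alpha\}\bigr) \le \sum_i c_1 \mu(B_i)\, e^{-c_2 \alpha/2} \le c_1' \mu(B)\, e^{-c_2 \alpha/2},
$$
while for smaller $\alpha$ the trivial bound $\mu(B)$ suffices. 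Combining the two regimes and absorbing the factor $1/\log r_B$ into the exponential constant gives~\eqref{eq44.11}.

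The $L^p$-characterization follows immediately from the layer-cake identity
$$
\frac{1}{\mu(B)} \int_B |f - A_{r_B}f|^p\,d\mu = \frac{p}{\mu(B)} \int_0^\infty \alpha^{p-1}\,\mu\bigl(\{x \in B : |f - A_{r_B}f| > \alpha\}\bigr)\,d\alpha,
$$
applying~\eqref{eq44.1} when $B \in \mathcal{B}_1$ and~\eqref{eq44.11} when $B \in \mathcal{B}_0$; the reverse direction is H\"older's inequality. The main obstacle I anticipate lies in the rigorous execution of the non-doubling stopping-time decomposition in the first step: one must produce a truly disjoint family $\{B_j\}$ (rather than a Whitney-type cover) and verify that the resulting sub-balls inherit membership in $\mathcal{B}_1$, which is precisely where the narrow geometric definition~\eqref{b1} of the exceptional class $\mathcal{B}_0$ is crucial.
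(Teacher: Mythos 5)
Your overall shape — normalize $\|f\|_{\bmo}=1$, run a Calder\'on--Zygmund stopping decomposition, split $f-A_{r_B}f=(f-A_{r_{B_j}}f)+(A_{r_{B_j}}f-A_{r_B}f)$, and iterate — is the right one, but the step you flag as ``the main obstacle'' is not an execution detail: it is where essentially all of the work in the paper's proof lives, and several of the claims you make about it are wrong. First, the stopping sub-balls $B_j$ need not belong to $\mathcal{B}_1$: a sub-ball of $B$ centred in $\Rf^n$, of radius $\ge 2$, containing $K$, with $r_{B_j^m}$ in the window $\big(r_{B_j}^{(n-\rho)/(m-\rho)},\,r_{B_j}\big)$, is in $\mathcal{B}_0$ by \eqref{b1}, and for $B$ near $K$ such sub-balls are unavoidable. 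If $B_j\in\mathcal{B}_0$, the BMO bound for the first summand is $\frac{1}{\mu(B_j)}\int_{B_j}|f-A_{r_{B_j}}f|\le \log r_{B_j}$, not $\le 1$, and your iteration loses its uniform control. Second, the inference ``the second summand has average $\gtrsim T$, coupled with the pointwise bound $C(1+|\log(r_{B_j}/r_B)|)$, forces $|A_{r_{B_j}}f-A_{r_B}f|\le CT$ pointwise'' is a non sequitur: a lower bound on the average plus an upper bound in terms of $|\log(r_{B_j}/r_B)|$ only gives a lower bound on $|\log(r_{B_j}/r_B)|$, not the upper bound $\le CT$ you need. In the paper, the analogous bound (properties (B-1), (B-2$'$), \dots) is proved by integrating the Poisson-kernel upper bounds of Proposition~\ref{prop2.1} against $f-A_{Q_0}f$ over a chain of dyadic ancestors, not by the coarse logarithmic bound from Definition~\ref{def-BMO0}(ii). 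Third, the stopping decomposition itself cannot be run with balls in a non-doubling space as you describe: to get a genuinely disjoint family covering the level set, one needs a dyadic grid; to get an upper bound on the stopping averages (not just the lower bound $>T$), one needs doubling of the measure via the parent cube. The paper works with a dyadic grid, replaces the stopping criterion by an average over the dilate $2R$, and — crucially — at each generation selects unconditionally the one special cube $Q_k$ touching the origin, outside the stopping criterion. This chain $Q_1\supset Q_2\supset\cdots$ is precisely what handles the near-$K$ non-doubling geometry, and it changes the generation-$K$ measure bound from the classical $\beta^{-K}\mu(Q_0)$ to $K\mu(Q_0)/2^{K-1}$.

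Your Part (ii) reduction — covering $B\in\mathcal{B}_0$ by unit balls, using the log bound from Definition~\ref{def-BMO0}(ii), and applying Part (i) — is cleaner than the paper's route (which covers $B$ by a bounded number of comparable-side-length cubes $Q,Q'$ and exploits $\log(r_B/r_Q)\lesssim\log r_B$), and would be fine if Part (i) held; but it inherits whatever gap remains there. The $L^p$-characterization via the layer-cake identity and H\"older is standard and correct. The net effect: the proposal is a sound outline of the Duong--Yan scheme, but the actual content of the paper's proof of \eqref{eq44.1} and \eqref{eq44.11} — the dyadic decomposition adapted to $K$, the unconditional special-cube chain, the Poisson-kernel estimates for the (B-$k$) bounds, and the weaker-than-geometric $K/2^{K-1}$ measure decay — is missing from your sketch.
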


\medskip

To establish the interpolation between $L^p(M)$ and $\bmo$, we introduce   the following new
version of sharp maximal function
\begin{eqnarray}\label{sharp}\\
{\mathcal M}^\sharp f(x):=\max\Big\{\sup_{B\ni x,B\in \mathcal B_1} \frac{1}{\mu(B)}\int_B|f(y)-A_{r_B}f(y)| \, dy,
\sup_{B\ni x,B\in \mathcal B_0} \frac{1}{(\log r_B)\mu(B)}\int_B|f(y)-A_{r_B}f(y)| \, dy\Big\}. \nonumber
\end{eqnarray}
As in the case of the classical BMO,  we will establish a version of good-$\lambda$ inequality. Just recall that in
 the classical case \cite{Gra} and \cite{DY1}, to prove the interpolation, they established the good-$\lambda$ inequality
 with respect to sharp maximal function and Hardy--Littlewood maximal function. However, in our setting, the Hardy--Littlewood
 maximal function is not the suitable candidate.  Hence, we now introduce a new nontangential maximal function in terms
 of the Poisson semigroup.  To do it, first we define two sets of dyadic cubes:
$$
\mathcal{I}_1=\{\mbox{dyadic cubes $Q\subset \R^m$ or $Q\subset \Rf^n$ such that $r_Q\geq 2$ and dist$(Q,K)\leq r_Q$}\}
$$
and
$$
\mathcal{I}_2=\{\mbox{dyadic cubes $Q\subset \R^m$ or $Q\subset \Rf^n$ such that $r_Q<2$ or dist$(Q,K)> r_Q$}\}.
$$
Then we introduce our ``cube'' system on $M$ in the following way: the set $\mathcal{D}_1$ includes the following cubes:
\begin{itemize}
\item[(1)]
 all dyadic cubes in $\Rf^n$ including $K$;

\item[(2)]  all dyadic cubes in $\R^m$ such that none of the corners of the cube is around $K$, that is, all dyadic dubes belong to $\mathcal{I}_2$;

\item[(3)] a new``cube" $Q$ defined by $Q=Q_m\cup Q_n$, where $Q_m$ is a dyadic cube in $ \R^m$ and $Q_m\in \mathcal{I}_1$,  and $Q_n$ is a dyadic cube $ \Rf^n$ and $Q_n\in \mathcal{I}_1$ with  $\mu(Q_n)\geq\mu(Q_m)$. Define the ``side length'' of $Q$ as the side length $Q_n$, that is, $r_Q=r_{Q_n}$;
\end{itemize}
the set $\mathcal{D}_2$ includes the following cubes:
\begin{itemize}
\item[(4)]
all the dyadic cubes in $\R^m$ such that one of the corners of the cube is around $K$, that is, all dyadic dubes belong to $\mathcal{I}_1$.
\end{itemize}
We define the non-tangential maximal function on $M$ by
\begin{equation}\label{maximal}
\N_L f(x):=\max\left\{\sup_{Q\ni x, Q\in \mathcal{D}_1}\sup_{y\in Q} |\exp (-r_Q\sqrt L)f(y)|,\sup_{Q\ni x, Q\in \mathcal{D}_2}\sup_{y\in Q,|y|\geq r_Q^{\frac{m-n}{m-2}}/2} |\exp (-r_Q\sqrt L)f(y)|\right\}.
\end{equation}
{We can see for $Q\in \mathcal{I}_1$, in the definition of $\N_L$, we skip a corner of $Q$. In order to have the information of this corner, we need cubes in (3) of $\mathcal{D}_1$.}
%\end{definition}
With this new non-tangential maximal function, we establish the following good-$\lambda$ inequality:
There exits small enough $\gamma>0$ and large enough $K>0$ such that for all $\lambda>0$  and all locally integrable functions $f$, we have
\begin{eqnarray}\label{ess}
\mu\big(\{  x\in M: |f(x)|>K\lambda,\ \mathcal M^\sharp f(x) \leq \gamma\lambda \}\big)
\leq C\gamma\mu\big(\{x\in M: \N_L  f(x)>\lambda\}\big).
\end{eqnarray}

Based on the estimate \eqref{ess}, we can show  the following interpolation result for a holomorphic family of operators.

\begin{theorem}\label{main2}
Assume that $T_z$ is a holomorphic family of linear operators for $z = s + i t$ with $0 \le s \le 1$ and $-\infty < t < \infty$.
Also assume that $T_{it}$ is uniformly bounded on $L^q(M)$ for some $1 < q < \infty$ and $T_{1+it}$ is uniformly bounded
from $L^{\infty}(M)$ to $\bmo$. Then $T_{\theta}$ is bounded on $L^p(M)$ whenever $0 \le \theta = 1-q/p < 1$.
 \end{theorem}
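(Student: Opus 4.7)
My plan is to deduce Theorem~\ref{main2} by first converting the good-$\lambda$ estimate~\eqref{ess} into a Fefferman--Stein-type inequality of the form $\|f\|_{L^p}\le C\|\mathcal{M}^\sharp f\|_{L^p}$, and then obtaining the matching control $\|\mathcal{M}^\sharp(T_\theta f)\|_{L^p}\le C\|f\|_{L^p}$ by applying Stein's complex interpolation theorem to a linearisation of the sublinear family $\mathcal{M}^\sharp\circ T_z$. For the first step, inserting the decomposition $\{|f|>K\lambda\}\subset\{|f|>K\lambda,\ \mathcal{M}^\sharp f\le\gamma\lambda\}\cup\{\mathcal{M}^\sharp f>\gamma\lambda\}$ into~\eqref{ess}, multiplying by $p\lambda^{p-1}$ and integrating over $\lambda>0$ yields
\begin{equation*}
\|f\|_{L^p}^{p}\le CK^{p}\gamma\,\|\N_L f\|_{L^p}^{p}+K^{p}\gamma^{-p}\|\mathcal{M}^\sharp f\|_{L^p}^{p}.
\end{equation*}
Given an a priori $L^p$-bound $\|\N_L f\|_{L^p}\le C_0\|f\|_{L^p}$ (which should be available from the Poisson-kernel estimates of Proposition~\ref{prop2.1} together with comparison to a non-centred maximal operator on $M$), I choose $\gamma$ small enough to absorb the $\N_L$ term into the left-hand side, obtaining $\|f\|_{L^p}\le C\|\mathcal{M}^\sharp f\|_{L^p}$ for every $f\in L^p(M)$.

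Next I verify the two endpoint bounds for the family $\mathcal{M}^\sharp\circ T_z$. On the line $\Re z=0$, a direct computation using the Poisson-kernel upper bound gives the pointwise domination $\mathcal{M}^\sharp h\le C\mathscr{M}h$ for a suitable Hardy--Littlewood-type maximal operator $\mathscr{M}$ on $M$; combined with the hypothesis that $T_{it}$ is uniformly bounded on $L^q$, this gives $\|\mathcal{M}^\sharp(T_{it}f)\|_{L^q}\le C\|f\|_{L^q}$. On the line $\Re z=1$, the definition of $\|\cdot\|_{\bmo}$ together with the identification of Theorem~\ref{main0} yields the pointwise bound $\mathcal{M}^\sharp g(x)\le\|g\|_{\bmo}$, so $\|\mathcal{M}^\sharp(T_{1+it}f)\|_{L^\infty}\le C\|T_{1+it}f\|_{\bmo}\le C\|f\|_{L^\infty}$. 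To apply Stein's theorem in spite of the sublinearity of $\mathcal{M}^\sharp$, I linearise by selecting, for each $x\in M$, a ball $B(x)$ that attains the supremum in~\eqref{sharp} up to a factor $2$ and a measurable unimodular weight $\phi_x(y)$ with $\phi_x(y)(h(y)-A_{r_{B(x)}}h(y))=|h(y)-A_{r_{B(x)}}h(y)|$ on $B(x)$. The resulting linear operator $U$ dominates $\tfrac12\mathcal{M}^\sharp$ pointwise, and $z\mapsto U(T_zf)$ is an admissible holomorphic family on $0\le\Re z\le1$ for simple $f$. The endpoint bounds transfer to $U$, Stein's interpolation delivers $\|U(T_\theta f)\|_{L^p}\le C\|f\|_{L^p}$, hence $\|\mathcal{M}^\sharp(T_\theta f)\|_{L^p}\le C\|f\|_{L^p}$, and combining with the Fefferman--Stein inequality of the previous paragraph completes the proof.

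The principal obstacle is establishing the a priori $L^p$-boundedness of $\N_L$ required for the absorption step: the definition~\eqref{maximal} excises a corner region for cubes in $\mathcal{D}_2$, so showing that $\N_L$ still dominates $f$ on $L^p$ requires a careful ball-by-ball argument exploiting the dichotomy $\mathcal{B}_0\cup\mathcal{B}_1$ and the Poisson-kernel estimates near the centre $K$ of $M$. A secondary, more technical point is verifying admissibility of the linearised holomorphic family $z\mapsto U(T_zf)$ in Stein's theorem, which follows from standard measurable selection arguments together with the analyticity of $z\mapsto T_zf$ on simple $f$.
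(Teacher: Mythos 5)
Your proposal follows the same two-step strategy as the paper: a Fefferman--Stein inequality $\|f\|_{L^p}\lesssim\|\mathcal{M}^\sharp f\|_{L^p}$ derived from the good-$\lambda$ estimate~\eqref{ess}, followed by Stein complex interpolation applied to a measurable linearisation of $\mathcal{M}^\sharp\circ T_z$, with endpoint bounds on $\Re z=0$ (from the $L^q$-boundedness of $\mathcal{M}^\sharp$) and on $\Re z=1$ (from the pointwise bound $\mathcal{M}^\sharp g\le\|g\|_{\bmo}$). Two small remarks. First, your $\Re z=0$ endpoint is phrased as a pointwise domination of $\mathcal{M}^\sharp$ by a single ``Hardy--Littlewood-type'' maximal operator; because $M$ is non-doubling, the usual argument giving $|A_t h|\lesssim\mathcal{M}h$ fails, and what one actually has (and what the paper proves in Theorem~\ref{th interpolation}) is $\mathcal{M}^\sharp h\le\mathcal{M}h+\mathcal{M}(\mathcal{N}_L h)$, a composition involving the non-tangential maximal $\mathcal{N}_L$ whose $L^q$-boundedness (Theorem~\ref{th nontangential 2}) is itself the nontrivial input. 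So $\mathcal{N}_L$ enters not only in the absorption step you flag as the principal obstacle, but also, and for the same geometric reason, in this endpoint bound. Second, your linearisation uses a single ball achieving the maximum in \eqref{sharp}; the paper's $U^z$ instead takes the sum of two terms, one over a $\mathcal{B}_1$-ball and one over a $\mathcal{B}_0$-ball (with the $\log r_B$ normalisation in the latter). Since $\mathcal{M}^\sharp$ is a $\max$, the two are comparable, but the sum form makes the dominations $|U^z f|\le 2\mathcal{M}^\sharp(T_zf)$ and $2\sup U^z f\ge\mathcal{M}^\sharp(T_zf)$ slightly cleaner, and with a single ball you must also record whether it lies in $\mathcal{B}_0$ or $\mathcal{B}_1$ and insert the $\log r_B$ weight accordingly. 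With these adjustments the argument matches the paper's.
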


Note that in the special case that the family $T_z = T$ for all $z$, then we obtain the following:
Assume that $T$  is a sub-linear operator which is
bounded on $L^q(X)$, $1 \leq q < \infty$ and bounded from
$L^\infty(M)$ to $\bmo$.
Then $T$ is bounded on $L^p(M)$ for all $q<p<\infty$.
See Theorem \ref{Theorem-Interpolation} for details.

As an application, we obtain endpoint boundedness of the Laplace transform for the operator $\sqrt{L}$. For more details about the $L^p$ boundedness of the Laplace transform, we refer to Corollary 3 in \cite[p. 121]{Stein}.

\begin{theorem}\label{main3}
Let $\Delta$ be the Laplace-Beltrami
operator on $M:=\Rf^n\sharp\mathbb R^m$ with $m>n\geq 3$ and $L=\Delta+V$ be the Schr\"odinger
operator with non-negative potential $V$.
Let $\tilde m(\sqrt L)$ be the holomorphic functional calculus of Laplace transform type of $\sqrt{L}$ defined by
$$\tilde m(\sqrt L)f  = \int_0^\infty \left[\sqrt L \exp(-t\sqrt L) f \right] m(t) \, dt\, $$
in which $m(t)$ is a bounded function on $[0, \infty)$, i.e.,
$ |m(t)|\leq C_0, $ where $C_0$ is a constant.  Then $\tilde m(\sqrt{L})$ is bounded from $L^\infty(M)$ to $\bmo$. Hence by interpolation and duality, the operator $\tilde m(\sqrt{L})$ is bounded on $L^p(M)$ for $1 < p < \infty$.
\end{theorem}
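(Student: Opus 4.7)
The plan reduces the theorem to showing the endpoint bound $\tilde m(\sqrt L)\colon L^\infty(M)\to\bmo$; the $L^p$-bound for $1<p<\infty$ then follows from Theorem~\ref{main2}. Writing $\tilde m(\sqrt L)=\phi(\sqrt L)$ with $\phi(\lambda)=\int_0^\infty \lambda e^{-t\lambda}m(t)\,dt$, one has $|\phi(\lambda)|\leq \|m\|_\infty\int_0^\infty \lambda e^{-t\lambda}\,dt=\|m\|_\infty$, so the spectral theorem gives $L^2$-boundedness. Applied to the constant holomorphic family $T_z\equiv\tilde m(\sqrt L)$ with $q=2$, Theorem~\ref{main2} then yields $L^p$ for $2<p<\infty$, and duality (the adjoint $\tilde m(\sqrt L)^*$ has the same Laplace-transform form with $\overline m$) covers $1<p<2$.

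For the endpoint bound I would verify both conditions of Definition~\ref{def-BMO0} applied to $F:=\tilde m(\sqrt L)f$ with $\|f\|_\infty\leq 1$. For condition~(ii), the key manipulation substitutes $v=s+u$ and $v=t+u$ in the Laplace integral to obtain, for $s>t>0$,
\[
A_sF(x)-A_tF(x)=-\int_t^s m(v-t)\,\sqrt L\,e^{-v\sqrt L}f(x)\,dv+\int_s^\infty\bigl[m(v-s)-m(v-t)\bigr]\,\sqrt L\,e^{-v\sqrt L}f(x)\,dv.
\]
The pointwise estimate $|\sqrt L\,e^{-v\sqrt L}f(x)|\leq C\|f\|_\infty/v$, extracted from Proposition~\ref{prop2.1} via the identity $\sqrt L\,e^{-v\sqrt L}=-\partial_v e^{-v\sqrt L}$ and the observation that the time derivative of the Poisson kernel scales well even when its spatial derivatives fail, bounds the first integral by $C\|m\|_\infty\log(s/t)$. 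For the second integral I would first take $m$ smooth and compactly supported, integrate by parts in $v$ using the same identity, bound the boundary term at $v=s$ by $|m(0)-m(s-t)|\,|A_sf(x)|\leq 2\|m\|_\infty\|f\|_\infty$ and the boundary term at infinity by pointwise decay of $A_vf$, and pass to the limit for general bounded $m$ by a standard approximation argument.

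For condition~(i), I would estimate $\mu(B)^{-1}\int_B|F-A_{r_B}F|\,d\mu$ for balls $B\subset\R^m$ or $B\subset\Rf^n$ by splitting the Laplace integral at $t=r_B$. For $t\leq r_B$ the kernel of $(I-e^{-r_B\sqrt L})\sqrt L\,e^{-t\sqrt L}$ is controlled by direct application of Proposition~\ref{prop2.1}. For $t>r_B$ I would use the identity
\[
(I-e^{-r_B\sqrt L})\,\sqrt L\,e^{-t\sqrt L}=\int_0^{r_B}L\,e^{-(t+s)\sqrt L}\,ds,
\]
combined with the sharper decay $\|L\,e^{-v\sqrt L}\|_{L^\infty\to L^\infty}\lesssim 1/v^2$ coming from the second time derivative of the Poisson kernel, producing an integrable $O(r_B/t^2)$ tail and thus an $O(\|m\|_\infty\|f\|_\infty)$ contribution to the oscillation.

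The main obstacle is precisely the point emphasised in the introduction: the Poisson kernel $p_v(x,y)$ on $M$ admits no usable spatial regularity, so every smoothness input must be produced from time derivatives of $p_v$ via Proposition~\ref{prop2.1} and the identities $\sqrt L\,e^{-v\sqrt L}=-\partial_v e^{-v\sqrt L}$, $L\,e^{-v\sqrt L}=\partial_v^2 e^{-v\sqrt L}$. A secondary subtlety is that the defining Laplace integral for $\tilde m(\sqrt L)f$ need not converge absolutely when $f\in L^\infty(M)$; since both conditions of Definition~\ref{def-BMO0} involve only differences, this is resolved by interpreting $F$ modulo constants (or, equivalently, by defining $A_sF-A_tF$ and $(I-A_{r_B})F$ directly and verifying convergence of these objects).
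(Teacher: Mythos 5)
Your plan to verify the two conditions of Definition~\ref{def-BMO0} directly is a genuinely different route from the paper, which instead proves the oscillation estimates \eqref{BMO-def1} and \eqref{BMO-def2} of the equivalent characterisation ${\rm BMO}_L^\rho(M)$ (taking $\rho=2$), distinguishing balls in $\mathcal B_0^2$ (where the $\log r_B$ factor is essential) from those in $\mathcal B_1^2$, and further subdividing according to the location of the centre and the distance to $K$. Working through Definition~\ref{def-BMO0} instead means you only need balls wholly contained in one end for condition~(i) plus a pointwise semigroup estimate for condition~(ii) — potentially a cleaner bookkeeping — but both halves of your sketch currently have real gaps.

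For condition~(ii), the integration-by-parts strategy does not close. After IBP the boundary term at $v=\infty$ is $[m(v-s)-m(v-t)]\,e^{-v\sqrt L}f(x)$, and $e^{-v\sqrt L}f(x)$ does \emph{not} decay as $v\to\infty$ for general $f\in L^\infty(M)$ (indeed $e^{-v\sqrt\Delta}1\equiv 1$); restricting first to compactly supported $m$ kills this term but then produces $\int_s^\infty[m'(v-s)-m'(v-t)]e^{-v\sqrt L}f\,dv$, which is controlled by $\|m'\|_\infty$ rather than $\|m\|_\infty$, and the proposed limiting argument does not recover a bound depending only on $\|m\|_\infty$. The intended conclusion is nonetheless correct and has a cleaner direct proof that you should use instead: write
$A_sF-A_tF=\int_0^\infty m(u)\,\sqrt L\bigl(e^{-(s+u)\sqrt L}-e^{-(t+u)\sqrt L}\bigr)f\,du$
and observe that $\sqrt L\bigl(e^{-(s+u)\sqrt L}-e^{-(t+u)\sqrt L}\bigr)f=-\int_{t+u}^{s+u}Le^{-w\sqrt L}f\,dw$ (for $s>t$); since Proposition~\ref{prop2.1} with $k=2$ gives $\|Le^{-w\sqrt L}\|_{L^\infty\to L^\infty}\leq C/w^2$, the inner integral is bounded by $C\|f\|_\infty\bigl(\tfrac{1}{t+u}-\tfrac{1}{s+u}\bigr)$, and $\int_0^\infty\bigl(\tfrac1{t+u}-\tfrac1{s+u}\bigr)\,du=\log(s/t)$, giving exactly $|A_sF(x)-A_tF(x)|\leq C\|m\|_\infty\|f\|_\infty\log(s/t)$. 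This avoids both the approximation by smooth $m$ and the decomposition at $v=s$ entirely.

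For condition~(i), the claim that for $t\leq r_B$ ``the kernel of $(I-e^{-r_B\sqrt L})\sqrt L\,e^{-t\sqrt L}$ is controlled by direct application of Proposition~\ref{prop2.1}'' is not a proof: the $L^\infty\to L^\infty$ operator norm $\|(I-e^{-r_B\sqrt L})\sqrt L\,e^{-t\sqrt L}\|_{L^\infty\to L^\infty}$ is only $O(1/t)$ and $\int_0^{r_B}dt/t$ diverges. To make this range work one has to exploit that you are averaging over $B$, precisely as the paper does: split $f=f\chi_{4B}+f\chi_{(4B)^c}$, handle the local piece by $L^2$-boundedness of $(I-e^{-r_B\sqrt L})\tilde m(\sqrt L)$ and Cauchy--Schwarz (using $\mu(4B)\lesssim\mu(B)$ for $B$ contained in one end), and handle the far piece via the \emph{spatial} decay of $p_{t+s,2}(x,y)$ in Proposition~\ref{prop2.1} integrated over the annuli $2^j r_B<d(x,y)\leq 2^{j+1}r_B$. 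Once this near/far split is in place the contribution of $t\leq r_B$ is $O(\|m\|_\infty\|f\|_\infty)$; your treatment of $t>r_B$ via $\|Le^{-v\sqrt L}\|_{L^\infty\to L^\infty}\lesssim v^{-2}$ giving an $O(r_B/t^2)$ tail is correct and matches the paper's use of $I-e^{-r_B\sqrt L}=\int_0^{r_B}s\sqrt L e^{-s\sqrt L}\,\tfrac{ds}{s}$. Finally, your remark about interpreting $F$ modulo constants is the right resolution of the convergence issue; the paper tacitly works with $f\in L^2\cap L^\infty$ and this point deserves to be made explicit.
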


This result implies directly that $L^{is}$, $s\in \mathbb{R}$, which is one of the natural singular integrals associated with $L$,  is bounded from
$L^\infty(M)$ to $\bmo$ and on $L^p(M)$ for $1 < p < \infty$.
We note that in the work \cite{BDLW}, it was shown that the holomorphic functional calculus $\tilde  m(\sqrt{L})$ is of weak type $(1,1)$.

Another singular integral operator associated to $L$ which has attracted lots of attention is the Riesz transform $\nabla L^{-{1/2}}$.
We point out that when $L=\Delta$, i.e.,  the Laplace-Beltrami
operator on $M=\Rf^n\sharp\mathbb R^m$, Carron \cite{Car} first proved that  Riesz transform $\nabla \Delta^{-{1/2}}$
is bounded on $L^p(M)$ for $p\in ({n\over n-1},n)$ when $n>3$. Recently, Hassell and Sikora \cite{HS}
proved the full range of boundedness for $\nabla \Delta^{-{1/2}}$ by showing that $\nabla \Delta^{-{1/2}}$ is of weak type $(1,1)$,
 bounded on $L^p(M)$ for $1<p<n$ with $n>2$. and unbounded for $p\geq n$. Hence the Riesz transform $\nabla \Delta^{-{1/2}}$ is not
the suitable operator for the study of the structure of our BMO spaces. Indeed, we can see easily that the Riesz transform
is {\it not} bounded from $L^\infty(M)$ to $\bmo$; otherwise by interpolation we will get $L^p$ boundedness of the Riesz transform
for $1 < p < \infty$ which is a contradiction to the result in \cite{HS}.

%Another natural singular integral operator associated to $L$ is the Riesz transform $\nabla L^{-1/2}$.
%We point out that when $L=\Delta$, i.e.,  the Laplace-Beltrami
%operator on $M=\Rf^n\sharp\mathbb R^m$, Carron \cite{Car} first proved that  Riesz transform $\nabla \Delta^{-1/2}$
%is bounded on $L^p(M)$ for $p\in ({n\over n-1},n)$ when $n\geq 4$. Recently, Hassell and Sikora \cite{HS}
%proved the full range of boundedness for $\nabla \Delta^{-1/2}$ by showing that $\nabla \Delta^{-1/2}$ is
%of weak type $(1,1)$, is continuous on $L^p(M)$ for all $1<p<n$ with $n\geq 3$, and is unbounded on $L^p(M)$ otherwise.
%
%Thus, we can further note that, $\nabla \Delta^{-1/2}$ is unbounded from $L^\infty(M)$ to our $\bmo$, since otherwise by interpolation we will get a contradiction with \cite{HS}.

\medskip

\subsection{Structure and main techniques}

To obtain our results above, we mainly use the idea and framework from \cite{DY1}, where ${\rm BMO}_L$ was first introduced and established.
However, the main difficulties in this paper are still very substantial and we list them in the following:

\smallskip
(1) The upper bound of Poisson kernel $p_t(x,y)$ of the semigroup $e^{-t\sqrt{L}}$ is essentially different from the classical
upper bound, i.e.
$${1\over \mu(B(x,t))+ \mu(B(x, d(x,y)))}\left({ t \over {t} +d(x,y)}\right)^\epsilon,
$$
 and it depends heavily on the distance of the points $x,y$ from the center part $K$, and the terms in the denominator of the Poisson kernel
do not usually match the volume of the ball $B(x, {t})$ (or $B(y,{t})$), see Proposition~\ref{prop2.1} in Section 2.

(2) The underlying space $M$ has a non-doubling measure, which satisfies only the polynomial growth. If we just think of this point
only, then it is not new and there are already a few nice techniques and decompositions due to \cite{Tol,NTV,Hyt} and so on. However,
in order to get our results, we need to handle the non-doubling measure by adapting to the Poisson kernel upper bounds as mentioned in (1)
above. Hence, this leads to
a new technique and decomposition of $M$, which is different from  \cite{Tol,NTV,Hyt}.

\smallskip

To be more specific about the connections between (1) and (2) as we addressed above, we need to have a delicate argument
which takes into account the geometry of the manifold and the behaviour of the heat kernel.
Consider a ball $B\subset M$ centered at $x_B$ with radius $r_B$ and the Poisson kernel $p_s(x,y)$ with $s\approx r_B$ and $x$
(or $y$) close to $x_B$, for example, $d(x,y)\leq 2r_B$, and
we say that $B$ matches the Poisson kernel $p_s(x,y)$ if
the denominator of the upper bound of $p_s(x,y)$ is equivalent to the volume of $B$. We also say that $B$ is doubling if $\mu(2B)\leq C\mu(B)$ with $C$ an absolute constant. Then we have five cases
in which the first two cases are close to the classical setting of \cite{DY1}.

\smallskip

\noindent
{\it Case (i):}  $r_B\leq1$. In this case, wherever $x_B$ is, the ball $B$ is doubling and it matches $p_s(x,y)$.

\smallskip
\noindent
{\it Case (ii):}  $|x_B|\geq2 r_B$. In this case, the ball $B$ is away from the center part $K$. Hence $B$ is doubling

\hspace{1cm} and it matches $p_s(x,y)$.

 \smallskip

\noindent
 However, then there are the other three cases left when $r_B>1$, which requires new techniques.

\smallskip

\noindent
{\it Case (iii):} $B\subset\R^m$ and $|x_B|\leq 2r_B$. In this case,  the ball $B$ is doubling, however, it does not match

\hspace{1cm} $p_s(x,y)$.

\smallskip

\noindent
{\it Case (iv):} $B\subset\Rf^n$, $|x_B|\leq 2r_B$ and $B\not\in \mathcal B_0$. In this case,  the ball $B$  matches $p_s(x,y)$, however, it is

\hspace{1cm} not doubling.

\smallskip

\noindent
{\it Case (v):} $B\in \mathcal B_0$. In this case,  the ball $B$ does not  match $p_s(x,y)$, and it is not doubling either.

\hspace{1cm} This is the case where the term $\log r_B$ arises in the definition of $\bmo$ so that we

\hspace{1cm} can obtain the boundedness of certain
singular integrals from $L^\infty(M)$ to $\bmo$.

\smallskip

We would like to mention that the main technique in the proof of Theorem \ref{main1},  the John--Nirenberg type inequality for $\bmo$,
 is to provide suitable version of dyadic decompositions and
split the dyadic cubes into two groups, and then repeat the process infinitely many times. To be more specific, given a ball $B\subset M$,
 if is it not in {\it Case (v)} above,
then we consider two parts
$$ B\cap \Rf^n\ \ \ {\rm and}\ \ \ B\cap \R^m.  $$
Then for each part we consider the dyadic decomposition within the part itself, such that in each level of the dyadic cubes, only a finite
 number of dyadic cubes are close to the center $K$ and others are away. For the dyadic cubes away from $K$, we can handle the proof using the
method as in \cite{DY1}. For the dyadic cubes close to $K$, they could be non-doubling or they do not match the Poisson kernel upper bound, and
 hence there is not enough condition to handle that. However, the number of these cubes is up to a finite upper bound for all levels of dyadic
  cubes, so we can just handle them directly. Repeat this process infinitely many times, we obtain the usual form of John--Nirenberg inequality,
   i.e., \eqref{eq44.1} in Theorem \ref{main1}. Given a ball $B\subset M$, if it is in {\it Case (v)} above, then from a similar process we obtain the
    new version of John--Nirenberg inequality with an extra term $\log r_B$, i.e. , \eqref{eq44.11} in Theorem \ref{main1}.
Also in the proof of of Theorem \ref{main3},
	the main method   is to consider the ball $B$ in {\it Case (i)}---{\it Case (v)} as listed above. The first four cases
can be handled by decomposing the underlying space related to the Poisson kernel upper bounds. In the last case, we have
no information from the kernel upper bound and hence the term $\log r_B$ plays an important role.

%For more details, we refer to the proofs of \eqref{B0} and \eqref{B1} in the last section.

The layout of the paper is as follows. In Section~2,   we will prove  some preliminaries, which we need later, mostly on
the  kernel estimates of the heat   and Poisson semigroups of $\sqrt{L}$,
and establish $L^p$ bounds for non-tangential maximal function in terms of the Poisson semigroup.
In  Section 3,   we will prove the equivalence of two definitions of the BMO spaces.
With this, we can show  our main result  Theorem \ref{main1}, the John--Nirenberg inequality for $\bmo$ in Section 4.
In Section 5, we will prove Theorem \ref{main2}, the interpolation between $L^p(M)$ and $\bmo$.  In Section  6, we will  show   Theorem \ref{main3}, the boundedness of singular integral $L^{is}$ from $L^\infty(M)$
to our adapted space $\bmo$.

\iffalse
The main method we use here is to consider the ball $B$ in Case (i)---Case (v) as listed above. The first four cases
can be handled by decomposing the underlying space related to the Poisson kernel upper bounds. In the last case, we have
no information from the kernel upper bound and hence the term $\log r_B$ plays an important role. For more details, we
 refer to the proofs of \eqref{B0} and \eqref{B1} in the last section.
\fi

\bigskip

\section{Preliminaries on manifold with ends}
\setcounter{equation}{0}

Concerning the structure of manifolds with ends $M$, we refer readers to \cite{GS}. The manifold $M$ is basically a copy of $\mathbb R^m$ connected  to  $\mathfrak{R}^n$  smoothly by a compact set $K$ of length $1$ where $\mathfrak{R}^n=\mathbb R^n \times S^{m-n}$ and
$S^{m-n}$ denotes the unit sphere in $\mathbb R^{m-n}$.

For any $x\in M$, define
$
   |x|:=\sup_{z\in K}d(x,z),
$
where $d=d(x,y)$ is the geodesic distance in $M$.
One can see that $|x|$ is separated from zero on $M$ and
\begin{align*}%\label{x norm}
|x|\approx 1+d(x,K).
\end{align*}

For $x\in M$, let
$
   B(x,r):=\{y\in M: d(x,y)<r\}
$
be the geodesic ball with center $x\in M$ and radius $r>0$ and let
$
   V(x,r)=\mu(B(x,r))
$
where $\mu$ is the Riemannian measure on $M$. We also point out that the function $V(x,r)$ satisfies

\begin{itemize}
\item[(a)]  $V(x,r)\thickapprox r^m$ for all $x\in M$, when $r\leq 1$;

\item[(b)]  $V(x,r)\thickapprox r^n$ for $B(x,r)\subset \Rf^n$, when $r> 1$; and

\item[(c)] $V(x,r)\thickapprox r^m$ for $x\in \Rf^n\backslash K$, $r>2|x|$, or $x\in \R^m$, $r>1$.

\end{itemize}
It is not difficult to check that $M$ does not satisfy the doubling condition.  Indeed, consider a
sequence of balls $B(x_k,r_k)\subset \Rf^n$ such that $r_k = |x_k| > 1$ and $r_k \rightarrow \infty$ as $k \rightarrow \infty$.
Then $V(x_k,r_k)\thickapprox (r_k)^n$. However, $V(x_k,2r_k)\thickapprox (r_k)^m$ and the doubling condition fails.

Let $\Delta$  be the Laplace-Beltrami operator on $M$ and $e^{-t\Delta}$ the heat semi-group generated by $\Delta$.
We denote by $h_t(x,y)$ the heat kernel associated to $e^{-t\Delta}$.
In \cite{GS},  Grigor'yan and Saloff-Coste  obtained the following result.
\begin{TheoremA}[\cite{GS}] Let $M=\Rf^n\sharp\R^m$ with $3\leq n < m$. Then the heat kernel $h_t(x,y)$ satisfies the following estimates.
\begin{itemize}
\item[1.]
 For $t\leq 1$ and all $x,y\in M$,
\begin{eqnarray*}
h_t(x,y)\approx {C \over V(x,\sqrt{t})}\exp\Big( -c {d(x,y)^2\over t} \Big).
\end{eqnarray*}

\item[2.]   For $x,y\in K$ and all $t>1$,
\begin{eqnarray*}
h_t(x,y)\approx {C \over t^{n/2}}\exp\Big( -c {d(x,y)^2\over t} \Big).
\end{eqnarray*}

\item[3.]  For $x\in \R^m\backslash K$, $y\in K$ and all $t>1$,
\begin{eqnarray*}
h_t(x,y)\approx C \Big({1 \over t^{n/2} |x|^{m-2}  }+ {1\over t^{m/2}}\Big)\exp\Big( -c {d(x,y)^2\over t} \Big).
\end{eqnarray*}

\item[4.]  For $x\in \Rf^n\backslash K$, $y\in K$ and all $t>1$,
\begin{eqnarray*}
h_t(x,y)\approx C \Big({1 \over t^{n/2} |x|^{n-2}  }+ {1\over t^{n/2}}\Big)\exp\Big( -c {d(x,y)^2\over t} \Big).
\end{eqnarray*}

\item[5.]  For $x\in \R^m\backslash K$, $y\in \Rf^n\backslash K$ and all $t>1$,
\begin{eqnarray*}
h_t(x,y)\approx C \Big( {1\over t^{n/2}|x|^{m-2}} + {1\over t^{m/2}|y|^{n-2}} \Big)\exp\Big( -c {d(x,y)^2\over t} \Big)
\end{eqnarray*}

\item[6.]  For $x,y\in \R^m\backslash K$ and all $t>1$,
\begin{eqnarray*}
h_t(x,y)\approx  {C\over t^{n/2}|x|^{m-2}|y|^{m-2}}\exp\Big( -c {|x|^2+|y|^2\over t} \Big) +{C\over t^{m/2}}\exp\Big( -c {d(x,y)^2\over t} \Big)
\end{eqnarray*}

\item[7.]  For $x,y\in \Rf^n\backslash K$ and all $t>1$,
\begin{eqnarray*}
h_t(x,y)\approx  {C\over t^{n/2}|x|^{n-2}|y|^{n-2}}\exp\Big( -c {|x|^2+|y|^2\over t} \Big) +{C\over t^{n/2}}\exp\Big( -c {d(x,y)^2\over t} \Big).
\end{eqnarray*}
\end{itemize}
\end{TheoremA}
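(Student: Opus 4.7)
The plan is to prove Theorem A by the gluing method of Grigor'yan and Saloff-Coste, which bootstraps heat kernel estimates on $M$ from (a) two-sided estimates on each end $\mathfrak{R}^n$ and $\R^m$ individually, and (b) harmonic-profile / hitting-time estimates near the compact joining region $K$. Concretely, I would fix a slight enlargement $K'$ of $K$, write $M = U_n \cup K' \cup U_m$ with $U_n = \mathfrak{R}^n \setminus K'$ and $U_m = \R^m \setminus K'$, and regard each of $\mathfrak{R}^n = \R^n \times \mathbb{S}^{m-n}$ and $\R^m$ as a complete manifold in its own right. Both of these reference manifolds have bounded geometry, volume doubling, and the scale-invariant parabolic Harnack inequality (explicitly for $\R^m$, and by separation of variables on the compact fibre $\mathbb{S}^{m-n}$ for $\mathfrak{R}^n$), so their own heat kernels satisfy two-sided Li-Yau bounds relative to the volume functions that behave like $r^m$ at large $r$ for $\R^m$ and $r^n$ at large $r$ for $\mathfrak{R}^n$.

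Case 1 (small time) would come first, since $M$ has bounded geometry at unit scale: the local doubling and Poincar\'e inequality are uniform in $x$ for $r \le 1$, so Saloff-Coste's equivalence yields $h_t(x,y) \asymp V(x,\sqrt t)^{-1}\exp(-c\,d(x,y)^2/t)$ for $t \le 1$. For large time the key input, and the core of the proof, is the Dirichlet heat kernels $h_t^{D,U_n}$ and $h_t^{D,U_m}$ on each end with zero boundary value on $\partial K'$, together with the minimal positive harmonic function $\psi$ on each end vanishing on $\partial K'$. Because $m, n \ge 3$, both ends are transient, and a direct Green's-function computation yields $\psi_{\R^m}(x) \asymp |x|^{-(m-2)}$ on the $\R^m$ end and $\psi_{\mathfrak{R}^n}(x) \asymp |x|^{-(n-2)}$ on the $\mathfrak{R}^n$ end; these are exactly the profiles that appear in cases 3--7.

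With these ingredients one applies the two-sided Duhamel identity coupling the global $h_t$ to $h_t^{D,U_*}$ across $\partial K'$, and chains the parabolic Harnack inequality on each end separately to decompose $h_t(x,y)$ into a direct intra-end contribution plus a through-$K'$ contribution. The direct term recovers the Gaussian piece with the native volume factor. The through-$K'$ term factorises into an entrance probability $\sim \psi(x)$, an exit probability $\sim \psi(y)$, and the on-diagonal decay at the centre, which is forced to be $\sim t^{-n/2}$ because the smaller-dimensional end $\mathfrak{R}^n$ dominates the long-time return probability (its Faber-Krahn / isoperimetric profile is the worst of the two). Matching $x, y$ to the ends they lie in, and checking when $d(x,y)^2/t$, $|x|^2/t$, $|y|^2/t$ are the dominant exponent, reproduces each of the seven cases; in particular case 6 requires balancing both a ``through-$K'$'' term, giving the $t^{-n/2}|x|^{-(m-2)}|y|^{-(m-2)}$ factor, against a ``stay-in-$\R^m$'' term, giving the $t^{-m/2}$ Gaussian factor.

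The main obstacle will be the lower bounds, not the upper bounds. For the upper bound, the Davies perturbation trick together with the Faber-Krahn inequality on each end produces the Gaussian factors and reduces the through-$K'$ estimate to an elementary integral weighted by $\psi$. For the lower bound, however, volume doubling fails globally on $M$, so the scale-invariant parabolic Harnack inequality is unavailable across the full manifold; one must instead chain Harnack separately on each end and splice at $\partial K'$ by a direct Cheeger-type on-diagonal lower bound on $K'$, then verify that an explicit test path $x \rightsquigarrow K' \rightsquigarrow y$ realises the claimed product $\psi(x)\,t^{-n/2}\,\psi(y)\exp(-c(|x|^2+|y|^2)/t)$ without loss of constants. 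Keeping track of the exact polynomial prefactor through the chaining, rather than merely up to a sub-polynomial error, is the delicate step.
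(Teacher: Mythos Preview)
The paper does not prove Theorem A at all: it is quoted verbatim as a result of Grigor'yan and Saloff-Coste \cite{GS} and used as a black box, so there is no ``paper's own proof'' to compare against. Your outline is a faithful sketch of the method actually used in \cite{GS} (Dirichlet heat kernels on each end, harmonic profiles $\psi\asymp |x|^{-(m-2)}$ and $|x|^{-(n-2)}$, and the gluing/Duhamel argument with the smaller end governing the on-diagonal decay), and as such it is the right plan; but for the purposes of this paper no proof is required, only the citation.
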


Let  $e^{-t\sqrt{L}}$ the Poisson semi-group generated by $L=\Delta+V$, where is an arbitrary non-negative potential.
The following proposition was proved in    \cite[Theorem 2.2]{BDLW}.
\begin{proposition}\label{prop2.1}
Let $k\in \mathbb{N}$, we denote by $p_{t,k}(x,y)$ the kernel of $ (t\sqrt{L})^k e^{-t\sqrt{L}}$.  For $k=0$, we write $p_t(x,y)$ instead of $p_{t,0}(x,y)$.
For $k\in \mathbb{N}$, set $k\vee 1=\max\{k,1\}$. Then the kernel  $p_{t,k}(x,y)$ satisfies the following estimates:

\begin{itemize}
\item[1.]   For $x,y\in K $,
$$ |p_{t,k}(x,y)|\leq \frac{C}{t^m}\Big(\frac{t}{t+d(x,y)}\Big)^{m+k\vee 1}+ \frac{C}{t^n}\Big(\frac{t}{t+d(x,y)}\Big)^{n+k\vee 1}; $$

\item[2.]  For $x\in \mathbb{R}^m\backslash K $, $y\in K$,
$$ |p_{t,k}(x,y)|\leq \frac{C}{t^m}\Big(\frac{t}{t+d(x,y)}\Big)^{m+k\vee 1} +\frac{C}{t^n|x|^{m-2}}\Big(\frac{t}{t+d(x,y)}\Big)^{n+k\vee 1}; $$

\item[3.]  For  $x\in \Rf^n\backslash K $, $y\in K$,
$$ |p_{t,k}(x,y)|\leq \frac{C}{t^m}\Big(\frac{t}{t+d(x,y)}\Big)^{m+k\vee 1}+ \frac{C}{t^n}\Big(\frac{t}{t+d(x,y)}\Big)^{n+k\vee 1}; $$

\item[4.]  For  $x\in \mathbb{R}^m\backslash K $, $y\in \Rf^n\backslash K $,
\begin{align*}
|p_{t,k}(x,y)|&\leq \frac{C}{t^m}\Big(\frac{t}{t+d(x,y)}\Big)^{m+k\vee 1}+ \frac{C}{t^n|x|^{m-2}}\Big(\frac{t}{t+d(x,y)}\Big)^{n+k\vee1}\\
&\quad+\frac{C}{t^m|y|^{n-2}}\Big(\frac{t}{t+d(x,y)}\Big)^{m+k\vee 1};
\end{align*}

\item[5.]  For  $x,y\in \mathbb{R}^m\backslash K $,
$$ |p_{t,k}(x,y)|\leq \frac{C}{t^m}\Big(\frac{t}{t+d(x,y)}\Big)^{m+k\vee 1} +\frac{C}{t^n|x|^{m-2}|y|^{m-2}}\Big(\frac{t}{t+|x|+|y|}\Big)^{n+k\vee1};$$

\item[6.]  For $x,y\in \Rf^n\backslash K $,
$$ |p_{t,k}(x,y)|\leq \frac{C}{t^m}\Big(\frac{t}{t+d(x,y)}\Big)^{m+k\vee 1}+ \frac{C}{t^n}\Big(\frac{t}{t+d(x,y)}\Big)^{n+k\vee 1}. $$
\end{itemize}
\end{proposition}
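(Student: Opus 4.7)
My plan is to deduce the Poisson kernel bounds from the heat kernel bounds of Theorem A through the subordination formula, combined with a Feynman–Kac domination to reduce from $L = \Delta+V$ to $\Delta$. Since $V \ge 0$, the Feynman–Kac representation gives the pointwise inequality $0 \le h_t^L(x,y) \le h_t^\Delta(x,y)$, so each of the seven heat kernel bounds listed in Theorem A remains valid (up to absolute constants) for the semigroup $e^{-tL}$. Thus it suffices to bound the kernel of $(t\sqrt L)^k e^{-t\sqrt L}$ by running the semigroup $e^{-sL}$ through the subordination identity
$$
e^{-t\sqrt L} = \frac{t}{2\sqrt{\pi}} \int_0^\infty \frac{e^{-t^2/(4s)}}{s^{3/2}}\, e^{-sL}\,ds,
$$
and for $k\ge 1$ differentiating via $(t\sqrt L)^k e^{-t\sqrt L} = (-t\partial_t)^k e^{-t\sqrt L}$, which produces an analogous representation with kernel $\phi_{t,k}(s)$ comparable to $t^{k\vee 1} s^{-3/2-k/2} e^{-t^2/(8s)}$ after absorbing half of the Gaussian into a polynomial gain in $t/s$.

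In each of the seven regimes of Theorem A, I would plug the heat kernel bound into the subordination integral and reduce the estimate to the standard model integral
$$
\int_0^\infty \frac{t^{k\vee 1}}{s^{3/2 + k/2 + \alpha/2}}\,\exp\!\left(-\frac{t^2+c r^2}{8s}\right)ds \;\lesssim\; \frac{1}{t^{\alpha}}\left(\frac{t}{t+r}\right)^{\alpha+k\vee 1},
$$
where $\alpha$ is whichever exponent ($n$ or $m$) governs the spatial decay and $r$ is whichever combination ($d(x,y)$, or $|x|+|y|$) appears inside the Gaussian. Splitting the $s$-integral at $s=1$ separates the small-time regime (governed by case 1 of Theorem A, with volume $\approx t^{m/2}$) from the large-time regime (governed by the relevant one of cases 2--7), and matching the two regimes yields a sum of the $t^{-m}$ and $t^{-n}$ Poisson-type terms in Proposition~\ref{prop2.1}. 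The prefactors $|x|^{2-m}$, $|y|^{2-n}$, etc., are simply carried through because they do not depend on $s$.

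The main technical point, and the reason for the slightly different shapes of the bounds in cases 4 and 5, is the treatment of the heat kernel terms that carry $e^{-c(|x|^2+|y|^2)/s}$ rather than $e^{-cd(x,y)^2/s}$; subordination against this Gaussian inevitably produces a factor $(t/(t+|x|+|y|))^{n+k\vee 1}$ rather than $(t/(t+d(x,y)))^{n+k\vee 1}$, which is exactly how the $t+|x|+|y|$ appears in case 5 of the proposition. In case 6 for $x,y\in\mathfrak{R}^n\setminus K$, because $|x|,|y|\gtrsim 1$ on that end, the product $|x|^{n-2}|y|^{n-2}\gtrsim 1$, and $d(x,y)\le |x|+|y|$ allows one to absorb the corresponding subordinated term into the cleaner bound $C t^{-n}(t/(t+d(x,y)))^{n+k\vee 1}$ listed there. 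The remaining obstacles are bookkeeping: verifying that the derivative subordination kernel $\phi_{t,k}$ really is dominated by $t^{k\vee 1} s^{-3/2-k/2} e^{-t^2/(8s)}$ (an elementary computation using $\partial_t$ of a Gaussian in $t$), and checking uniformity of constants across the transition between small and large $s$.
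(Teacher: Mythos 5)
The paper itself does not prove Proposition~\ref{prop2.1}; it simply cites it as \cite[Theorem~2.2]{BDLW}. So there is no in-paper proof to compare against, and your task was effectively to reconstruct the argument that \cite{BDLW} would give. Your overall plan --- dominate $e^{-tL}$ by $e^{-t\Delta}$ using $V\ge 0$ (Trotter/Feynman--Kac), invoke the Grigor'yan--Saloff-Coste heat kernel bounds of Theorem~A, pass to the Poisson kernel via the subordination formula, split the $s$-integral at $s=1$, and compute the resulting model integrals regime by regime --- is indeed the natural and essentially forced route, and the regime-by-regime bookkeeping you describe (including the special role of the $e^{-c(|x|^2+|y|^2)/s}$ terms in cases 4--5 of the proposition and the absorption argument in case 6) is correct.

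There are, however, two genuine technical slips that need repair before the argument compiles. First, the identity $(t\sqrt L)^k e^{-t\sqrt L} = (-t\partial_t)^k e^{-t\sqrt L}$ is false for $k\ge 2$: the Euler operator $(-t\partial_t)^k$ produces a linear combination of $(t\sqrt L)^j e^{-t\sqrt L}$ for $j\le k$, not just the top term. The correct identity is $(t\sqrt L)^k e^{-t\sqrt L} = (-t)^k\,\partial_t^k\, e^{-t\sqrt L}$, and it is this ordinary $k$-th derivative that you should apply to the subordination kernel. Second, the claimed pointwise bound $|\phi_{t,k}(s)|\lesssim t^{k\vee 1}s^{-3/2-k/2}e^{-t^2/(8s)}$ is not a valid upper bound for $(-t)^k\partial_t^k$ of the subordination weight $\frac{t}{2\sqrt\pi}s^{-3/2}e^{-t^2/(4s)}$: already for $k=1$ the true size is $\lesssim t\,s^{-3/2}e^{-t^2/(8s)}$, which your proposed $t\,s^{-2}e^{-t^2/(8s)}$ does not dominate as $s\to\infty$. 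The correct rate is
\[
\bigl|(-t)^k\partial_t^k\bigl(\tfrac{t}{2\sqrt\pi}s^{-3/2}e^{-t^2/(4s)}\bigr)\bigr|\;\lesssim\; t^{\,k\vee 1}\,s^{-(k\vee 1+2)/2}\,e^{-t^2/(8s)},
\]
obtained by the Leibniz rule applied to $t\,e^{-t^2/(4s)}$, using $|\partial_t^\ell e^{-t^2/(4s)}|\lesssim s^{-\ell/2}e^{-t^2/(8s)}$ and then absorbing the leftover $(t/\sqrt s)^j$ factors into the Gaussian. Plugging this into the Gamma-integral identity then yields precisely $\frac{t^{k\vee 1}}{(t+r)^{\alpha+k\vee 1}}$, matching the exponent $\alpha+k\vee 1$ of the proposition (whereas your stated model integral would instead give the over-strong and unjustified exponent $\alpha+k+1$ for $k\ge 1$). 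With these two corrections, the rest of your outline carries through.
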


% \begin{proof} The inequality $p_t(x,y)\leq h_t(x,y)$ follows from domination when the potential $V$ is non-negative. The upper and lower bounds
% for the kernel $h_t(x,y)$ of $e^{-t\Delta}$ were proved in \cite{GS}.
% \end{proof}

Let us recall next the standard definition of uncentered Hardy--Littlewood maximal function.
\begin{definition}[\cite{DLS}]\label{HLMaximal}
For any  $p \in [1,\infty]$ and any function $f\in L^p$
let
$$
\mathcal M f(x) :=\sup_{y\in M,\ r>0} \left\{ \frac{1}{V(y,r)}\int_{B(y,r)} |f(z)| \, d\mu(z)\colon x\in B(y,r) \right\}.
$$
\end{definition}
\begin{lemma}[\cite{DLS}]\label{max}
The maximal function $\mathcal M(f)$ is of weak type $(1,1)$ and bounded on all $L^p$ spaces for $1< p \le \infty$.
\end{lemma}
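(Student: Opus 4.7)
The plan is to prove the $L^\infty$ bound by inspection, the weak-$(1,1)$ bound via a Besicovitch-type covering argument that bypasses the failure of the doubling condition, and then interpolate for $1 < p < \infty$ via Marcinkiewicz.

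First, the bound on $L^\infty$ is immediate from Definition~\ref{HLMaximal}: each average $\frac{1}{V(y,r)}\int_{B(y,r)}|f|\,d\mu$ is at most $\|f\|_\infty$, hence so is $\mathcal M f(x)$.

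For the weak-$(1,1)$ bound, I fix $\lambda > 0$ and $f \in L^1(M)$ and set $E_\lambda := \{x \in M : \mathcal M f(x) > \lambda\}$. For each $x \in E_\lambda$ I select a ball $B_x \ni x$ with $\int_{B_x}|f|\,d\mu > \lambda V(B_x)$. This forces $V(B_x) \leq \lambda^{-1}\|f\|_1$, and because $V(y,r)$ grows at least like a positive power of $r$ by the volume asymptotics (a)--(c), the radii $r_{B_x}$ are uniformly bounded. Since $M$ is a complete Riemannian manifold of bounded geometry -- its ends $\mathbb{R}^m$ and $\Rf^n = \mathbb{R}^n \times \mathbb{S}^{m-n}$ are flat (respectively, a flat--spherical product) glued smoothly across the compact set $K$ -- the Besicovitch covering theorem extracts a countable subfamily $\{B_j\} \subset \{B_x\}_{x \in E_\lambda}$ with $E_\lambda \subset \bigcup_j B_j$ and bounded overlap $\sum_j \mathbf{1}_{B_j} \leq N$ pointwise on $M$, for some constant $N = N(M)$. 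Summing yields
\[
\mu(E_\lambda) \leq \sum_j V(B_j) \leq \frac{1}{\lambda}\sum_j \int_{B_j}|f|\,d\mu = \frac{1}{\lambda}\int_M |f(x)| \sum_j \mathbf{1}_{B_j}(x)\,d\mu(x) \leq \frac{N}{\lambda}\|f\|_1,
\]
which is the required weak-$(1,1)$ inequality.

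The main obstacle will be establishing the Besicovitch covering property on $M$ with a uniform overlap constant, since Besicovitch is not automatic on general metric measure spaces. Away from $K$ each end is either Euclidean or a Euclidean--spherical product, so the classical Besicovitch theorem applies there with overlap constant depending only on $m$ and $n$. Near $K$ one uses that $M$ is smooth and $K$ is compact with smooth boundary, so a fixed neighbourhood of $K$ is itself a smooth Riemannian manifold on which the Federer--Besicovitch theorem applies; alternatively, one notes that only boundedly many pairwise disjoint balls of bounded radius can meet $K$, since each has volume bounded below by the small-scale asymptotics $V(y,r) \asymp r^m$. Patching the two contributions gives a global overlap bound. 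Finally, the $L^p$ estimates for $1 < p < \infty$ follow from Marcinkiewicz interpolation between the weak-$(1,1)$ and $L^\infty$ bounds.
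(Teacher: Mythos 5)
Your $L^\infty$ bound, the bound on the radii, and the Marcinkiewicz interpolation step are fine, but the weak-$(1,1)$ argument has a genuine gap: the Besicovitch covering theorem requires the balls to be \emph{centered} at the points of the set to be covered. For each $x \in E_\lambda$ you select $B_x \ni x$, but since $\mathcal{M}$ in Definition~\ref{HLMaximal} is the \emph{uncentered} maximal function, $B_x$ need not be centered at $x$, and Besicovitch does not apply to such a family. The standard fix in $\R^n$ --- replace $B_x$ by the centered ball $B(x, 2r_x) \supset B_x$ and compare averages --- relies on doubling, which $(M,\mu)$ fails by construction. This is not cosmetic: for a general non-doubling Radon measure on $\R^n$ the \emph{centered} maximal function is always of weak type $(1,1)$ by Besicovitch, but the \emph{uncentered} one can fail to be, already for the Gaussian measure in dimension $\geq 2$. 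A secondary issue is that a uniform Besicovitch constant for balls of every radius up to $R(\lambda,\|f\|_1)$, which tends to infinity as $\lambda\to 0$, does not follow from bounded geometry; the latter only controls a fixed small scale.

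What makes the lemma true is the particular structure of $M$, and the proof in \cite{DLS} (and the closely analogous argument for $\N_L$ in Theorem~\ref{th nontangential 2} of this paper) exploits it rather than relying on a covering lemma. One splits the averaging according to whether the ball lies well inside a single end --- where the measure is locally doubling and the contribution is controlled by a classical Hardy--Littlewood maximal function on $\R^m$ or on $\Rf^n$ --- or reaches across the compact center $K$. For the latter, the averages are bounded pointwise by quantities of the form $C\|f\|_{L^1}/|x|^n$ or $C\|f\|_{L^1}/|x|^m$, which are of weak type $(1,1)$ directly because $\mu(\{x\in\Rf^n: |x|<R\})\leq CR^n$ and $\mu(\{x\in\R^m: |x|<R\})\leq CR^m$. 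You would need an argument along these lines rather than a covering lemma.
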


Next we show $L^p$-bounds for non-tangential maximal function in terms of the Poisson semigroup,
which will be used in the sequel.
Precisely, we have

\begin{theorem}\label{th nontangential 2}
The non-tangential maximal function $\N_L$ as defined in \eqref{maximal}
 is of weak type $(1,1)$ and bounded on $L^p(M)$ for all $1<p\leq\infty$.
\end{theorem}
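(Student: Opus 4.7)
The plan is to prove the pointwise domination $\N_L f(x) \le C\, \Ma f(x)$ for every $x \in M$, after which the weak-type $(1,1)$ and $L^p$ ($1<p\le\infty$) statements follow immediately from Lemma \ref{max}. Fix $x \in M$, a cube $Q \in \mathcal{D}_1 \cup \mathcal{D}_2$ containing $x$, and a point $y \in Q$ satisfying the restriction $|y| \ge r_Q^{(m-n)/(m-2)}/2$ when $Q \in \mathcal{D}_2$; write $t = r_Q$. The task reduces to showing
$$\int_M |p_t(y,z)|\, |f(z)|\, d\mu(z) \le C\, \Ma f(x).$$
Split the integration domain according to whether $z \in K$, $z \in \R^m \setminus K$, or $z \in \Rf^n \setminus K$, and apply the relevant bound from Proposition \ref{prop2.1} on each piece.

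First observe that $x, y \in Q$ implies $d(x,y) \le C r_Q$; this is immediate for standard dyadic cubes, and for the composite cubes $Q = Q_m \cup Q_n$ in case (3) of $\mathcal{D}_1$ it follows from the fact that both constituent cubes lie within distance $\lesssim r_Q$ of $K$ since $Q_m, Q_n \in \mathcal{I}_1$. Consequently any ball $B(y, R)$ with $R \ge r_Q$ contains $x$, so $\tfrac{1}{V(y,R)}\int_{B(y,R)} |f|\, d\mu \le \Ma f(x)$. Every Poisson kernel bound in Proposition \ref{prop2.1} contains the ``standard'' term $\tfrac{C}{t^m}\bigl(\tfrac{t}{t+d(y,z)}\bigr)^{m+1}$; a dyadic annular decomposition of $M$ around $y$, combined with the universal polynomial bound $V(y,R) \le C R^m$, yields $\sum_{k\ge 0} 2^{-k(m+1)} \frac{V(y,2^{k+1}t)}{t^m}\, \Ma f(x) \le C \Ma f(x)$.

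The ``exotic'' terms split into two kinds. (i) Terms of type $\tfrac{C}{t^n}\bigl(\tfrac{t}{t+d(y,z)}\bigr)^{n+1}$ appear only when both $y, z \in \Rf^n \cup K$ (cases 1, 3, 6 of Proposition \ref{prop2.1}). Restricting the integration accordingly, the volume estimate $\mu(B(y,R) \cap (\Rf^n \cup K)) \le C R^n$ for $R \ge 1$ (the small-scale regime $R < 1$ being absorbed by the standard term) gives again $\le C \Ma f(x)$ by the same dyadic argument. (ii) Anomalous terms such as $\tfrac{1}{t^n |y|^{m-2}}(\ldots)^{n+1}$, $\tfrac{1}{t^n |y|^{m-2} |z|^{m-2}}(\ldots)^{n+1}$, and $\tfrac{1}{t^m |z|^{n-2}}(\ldots)^{m+1}$. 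The $|z|$-factors are harmless because $|z| \gtrsim 1$ for every $z \in M$, so $\tfrac{1}{|z|^{n-2}}, \tfrac{1}{|z|^{m-2}} \le C$ and these terms reduce back to standard ones. The $|y|$-factor is the delicate point: in $\mathcal{D}_2$ the imposed restriction $|y| \ge r_Q^{(m-n)/(m-2)}/2$ gives $|y|^{m-2} \ge C r_Q^{m-n}$, hence $\tfrac{1}{t^n |y|^{m-2}} \le \tfrac{C}{t^m}$; for $\mathcal{D}_1$ cubes with $Q \subset \R^m$, the condition $\operatorname{dist}(Q,K) > r_Q$ forces $|y| \gtrsim r_Q$ and the same reduction applies. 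The main obstacle is the meticulous case bookkeeping required to align each position of $(y,z)$ in Proposition \ref{prop2.1} with the corresponding subfamily of cubes in $\mathcal{D}_1 \cup \mathcal{D}_2$; the definitions of $\mathcal{D}_1$, $\mathcal{D}_2$, and the restriction on $y$ in the definition of $\N_L$ are calibrated precisely to render every anomalous term absorbable into a standard bound.
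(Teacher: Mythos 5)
Your proposal aims for a single pointwise domination $\N_L f(x) \le C\, \Ma f(x)$, which is neither what the paper establishes nor what your argument actually proves. There are two concrete gaps. In your ``kind (i)'' step, after restricting to $y,z\in\Rf^n\cup K$, you invoke $\mu(B(y,R)\cap(\Rf^n\cup K)) \le C R^n$ and ``the same dyadic argument.'' But $\Ma$ normalises by the \emph{full} volume $V(y,R)$, which is of order $R^m$ once $B(y,R)$ reaches across $K$ into $\R^m$ (exactly the regime $R\ge r_Q\ge 2$, $Q\in\mathcal I_1$ that matters here); the restricted bound $\int_{B(y,R)\cap\Rf^n}|f| \le C R^n\cdot(\text{average over } B(y,R)\cap\Rf^n)$ produces the \emph{restricted} maximal function $\mathcal M_n(f\chi_{\Rf^n})(x)$, not $\Ma f(x)$, and routing instead through $V(y,R)\le CR^m$ makes the dyadic sum $\sum_k 2^{-k(n+1)}(2^k t)^m/t^n$ diverge whenever $m>n+1$. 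This is precisely why the paper's proof introduces the auxiliary restricted operator $\mathcal M_n$ rather than proving a single $\Ma$-domination.

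The second gap is more serious. Your list of anomalous terms omits $\frac{C}{t^n|z|^{m-2}}\bigl(\frac{t}{t+d(y,z)}\bigr)^{n+1}$, which arises from the symmetric readings of cases 2 and 4 of Proposition~\ref{prop2.1} when $y\in\Rf^n\cup K$ and $z\in\R^m\setminus K$. Your prescription of dropping the $|z|^{m-2}$-factor (``harmless since $|z|\gtrsim 1$'') leaves $\frac{C}{t^n}\bigl(\frac{t}{t+d(y,z)}\bigr)^{n+1}$ with $z$ ranging over $\R^m$; this is \emph{not} covered by your kind~(i) (since $z\notin\Rf^n\cup K$), and it cannot be absorbed into the standard $\frac{C}{t^m}$-term because their ratio is $(t+d(y,z))^{m-n}$, unbounded once $t\ge 1$. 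For this piece the $|z|^{m-2}$-decay is essential. The paper exploits it through an annular decomposition around $K$ (the sets $\{2^k r_Q \le |z| < 2^{k+1} r_Q\}$ in Cases III--IV of the proof) combined with a direct $L^1$-estimate $\lesssim\|f\|_{L^1(M)}/|x|^n$ for the outer piece $\{|z|>r_Q\}$, and a similar estimate $\lesssim\|f\|_{L^1(M)}/|x|^m$ elsewhere (Cases I, II, V). These $\|f\|_{L^1}/|x|^m$-type bounds are \emph{not} dominated by any maximal function of $f$; their superlevel sets are instead controlled by a direct volume computation, which is the genuinely new ingredient of the weak-$(1,1)$ proof in this non-doubling setting. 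Your strategy of a single $\Ma$-domination therefore does not close, and is structurally weaker than the paper's three-pronged control by $\Ma$, $\mathcal M_n$, and direct $L^1$-bounds.
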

\begin{proof}
%Recall that the Hardy-Littlewood  maximal function $\mathcal{M}$ on $M$ introduced in \cite{DLS} is defined as follows:
%\begin{eqnarray}
%\mathcal{M}(f)(x):=\sup_{B\ni x}\frac{1}{\mu(B)}\int_B |f(y)|d\mu(y).
%\end{eqnarray}
First, it is easy to very that for all $0<t<\infty$ and $x\in M$
$$
\int_M p_t(x,y) \,d\mu(y)\leq C<\infty
$$
where $C$ is independent on $t$ and $x$. This implies that $\N_L$ is bounded on $L^\infty(M)$. So what remains to prove is $\N_L$ is of weak type $(1,1)$.

Next we define the restriction Hardy-Littlewood  maximal function   $\mathcal{M}_n$ on $\Rf^n$ as:
\begin{eqnarray}
\mathcal{M}_n(f)(x):=\sup_{B\ni x}\frac{1}{\mu(B\cap \Rf^n)}\int_{B\cap \Rf^n} |f(y)|\, d\mu(y),\quad f {\rm\ with\ } \supp f\subset \Rf^n.\nonumber
\end{eqnarray}
It is essentially the classical Hardy-Littlewood  maximal function on $\Rf^n$ and  it is of weak type $(1,1)$.

We now begin to prove $\N_L$ is of weak type $(1,1)$. Fixed $x\in M$ and $Q\in \mathcal{D}_1$.

{\bf Case I}: $Q\in \mathcal{I}_2$ and $Q\subset \R^m$. Let $f_m:=f\chi_{\R^m}$ and $f_n:=f\chi_{\Rf^n}$. For each $y\in Q$,
\begin{align}
|\exp (-r_Q\sqrt L)f(y)|&\leq \int_{\R^m} p_{r_Q}(y,z)|f_m(z)| \, d\mu(z)+\int_{\Rf^n} p_{r_Q}(y,z)|f_n(z)| \, d\mu(z)\nonumber.
\end{align}
For every $y\in\R^m$, no matter where is $z$, we have
$$
p_{r_Q}(y,z)\leq \frac{C}{r_Q^m}\bigg(1+\frac{d(y,z)}{r_Q}\bigg)^{-m-1}+\frac{C}{r_Q^n|y|^{m-2}}\bigg(1+\frac{|y|+|z|}{r_Q}\bigg)^{-n-1}.
$$
Then
\begin{align*}
\int_{\R^m} \frac{C}{r_Q^m}\bigg(1+\frac{d(y,z)}{r_Q}\bigg)^{-m-1}|f_m(z)|\, d\mu(z)\leq C\mathcal{M}(f)(x)
\end{align*}
and note that $Q\in \mathcal{I}_2$ and $x,y\in Q$ implies that $|x|\sim |y|>r_Q$
\begin{align*}
\int_{\R^m} \frac{C}{r_Q^n|y|^{m-2}}\bigg(1+\frac{|y|+|z|}{r_Q}\bigg)^{-n-1}|f_m(z)|\, d\mu(z)&\leq
\int_{\R^m} \frac{C}{r_Q^n|y|^{m-2}}\bigg(\frac{|y|}{r_Q}\bigg)^{-n}|f_m(z)| \, d\mu(z)\\
&\leq C\frac{\|f\|_{L^1(M)}}{|x|^m}.
\end{align*}

{\bf Case II}: $Q\in \mathcal{I}_2$ and $Q\subset \Rf^n$. Let $f_m:=f\chi_{\R^m}$ and $f_n:=f\chi_{\Rf^n}$. For each $y\in Q$,
\begin{align}
|\exp (-r_Q\sqrt L)f(y)|&\leq \int_{\R^m} p_{r_Q}(y,z)|f_m(z)|\, d\mu(z)+\int_{\Rf^n} p_{r_Q}(y,z)|f_n(z)| \, d\mu(z)\nonumber.
\end{align}
For every $y,z\in\Rf^n$, we have
$$
p_{r_Q}(y,z)\leq \frac{C}{r_Q^m}\bigg(1+\frac{d(y,z)}{r_Q}\bigg)^{-m-1}+\frac{C}{r_Q^n}\bigg(1+\frac{d(x,y)}{r_Q}\bigg)^{-n-1}
\leq \frac{C}{\mu(Q)}\bigg(1+\frac{d(x,y)}{r_Q}\bigg)^{-n-1}.
$$
Then
\begin{align*}
\int_{\Rf^n} p_{r_Q}(y,z)|f_n(z)| \, d\mu(z)\leq \int_{\Rf^n} \frac{C}{\mu(Q)}\bigg(1+\frac{d(x,y)}{r_Q}\bigg)^{-n-1}|f_n(z)| \, d\mu(z)\leq C\mathcal{M}_n(f_n)(x).
\end{align*}
For every $y\in\Rf^n$ and $z\in\R^m$, we have
$$
p_{r_Q}(y,z)\leq \frac{C}{r_Q^m}\bigg(1+\frac{d(y,z)}{r_Q}\bigg)^{-m-1}+\frac{C}{r_Q^n|z|^{m-2}}\bigg(1+\frac{|y|+|z|}{r_Q}\bigg)^{-n-1}.
$$
Then
\begin{align*}
\int_{\R^m} \frac{C}{r_Q^m}\bigg(1+\frac{d(y,z)}{r_Q}\bigg)^{-m-1}|f_m(z)| \,d\mu(z)\leq C\mathcal{M}(f)(x)
\end{align*}
and note that $Q\in \mathcal{I}_2$ and $x,y\in Q$ implies that $|x|\sim |y|>r_Q$
\begin{align*}
\int_{\R^m} \frac{C}{r_Q^n|z|^{m-2}}\bigg(1+\frac{|y|+|z|}{r_Q}\bigg)^{-n-1}|f_m(z)| \, d\mu(z)&\leq
\int_{\R^m} \frac{C}{r_Q^n}\bigg(\frac{|y|}{r_Q}\bigg)^{-n}|f_m(z)| \, d\mu(z)\\
&\leq C\frac{\|f\|_{L^1(M)}}{|x|^n}.
\end{align*}

{\bf Case III}: $Q\in \mathcal{I}_1$ and $Q\subset \Rf^n$. For each $y\in Q$,
\begin{align}
|\exp (-r_Q\sqrt L)f(y)|&\leq \int_{\R^m} p_{r_Q}(y,z)|f_m(z)| \,d\mu(z)+\int_{\Rf^n} p_{r_Q}(y,z)|f_n(z)| \, d\mu(z)\nonumber.
\end{align}
For every $y,z\in\Rf^n$, we have
$$
p_{r_Q}(y,z)\leq \frac{C}{r_Q^m}\bigg(1+\frac{d(y,z)}{r_Q}\bigg)^{-m-1}+\frac{C}{r_Q^n}\bigg(1+\frac{d(x,y)}{r_Q}\bigg)^{-n-1}
\leq \frac{C}{\mu(Q)}\bigg(1+\frac{d(x,y)}{r_Q}\bigg)^{-n-1}.
$$
Then
\begin{align*}
\int_{\Rf^n} p_{r_Q}(y,z)|f_n(z)| \, d\mu(z)\leq \int_{\Rf^n} \frac{C}{\mu(Q)}\bigg(1+\frac{d(x,y)}{r_Q}\bigg)^{-n-1}|f_n(z)| \, d\mu(z)\leq C\mathcal{M}_n(f_n)(x).
\end{align*}
For every $y\in\Rf^n$ and $z\in\R^m$, we have
$$
p_{r_Q}(y,z)\leq \frac{C}{r_Q^m}\bigg(1+\frac{d(y,z)}{r_Q}\bigg)^{-m-1}+\frac{C}{r_Q^n|z|^{m-2}}\bigg(1+\frac{|y|+|z|}{r_Q}\bigg)^{-n-1}.
$$
Then
\begin{align*}
\int_{\R^m} \frac{C}{r_Q^m}\bigg(1+\frac{d(y,z)}{r_Q}\bigg)^{-m-1}|f_m(z)| \, d\mu(z)\leq C\mathcal{M}(f)(x)
\end{align*}
It remains to control
\begin{align*}
&\int_{\R^m} \frac{C}{r_Q^n|z|^{m-2}}\bigg(1+\frac{|y|+|z|}{r_Q}\bigg)^{-n-1}|f_m(z)|\, d\mu(z)\\
&\leq
\int_{\{|z|>r_Q\}\cap \R^m} \frac{C}{r_Q^n|z|^{m-2}}\bigg(1+\frac{|z|}{r_Q}\bigg)^{-n-1}|f_m(z)| \, d\mu(z)+\int_{\{|z|\leq r_Q\}\cap\R^m} \frac{C}{r_Q^n|z|^{m-2}}\bigg(1+\frac{|z|}{r_Q}\bigg)^{-n-1}|f_m(z)| \, d\mu(z)\\
&=:I+II.
\end{align*}
For the term $I$,
\begin{align*}
&\int_{\{|z|>r_Q\}\cap \R^m} \frac{C}{r_Q^n|z|^{m-2}}\bigg(1+\frac{|y|+|z|}{r_Q}\bigg)^{-n-1}|f_m(z)| \, d\mu(z)\\
&\leq \int_{\{|z|>r_Q\}\cap \R^m} \frac{C}{r_Q^n|z|^{m-2}}\bigg(\frac{|z|}{r_Q}\bigg)^{-n}|f_m(z)| \, d\mu(z)\\
&\leq \int_{\{|z|>r_Q\}\cap \R^m} \frac{C}{|z|^{n+m-2}}|f_m(z)| \, d\mu(z)\\
&\leq \int_{\{|z|>r_Q\}\cap \R^m} \frac{C}{|x|^{n}}|f_m(z)|\, d\mu(z)\\
&\leq C\frac{\|f\|_{L^1(M)}}{|x|^n}.
\end{align*}
For  the term $II$, choose a nature number $k_0$ such that $(2^{k_0}r_Q)^{m}\sim r_Q^{n}$.
\begin{align*}
II&= \int_{\{|z|\leq r_Q\}\cap \R^m} \frac{C}{r_Q^n|z|^{m-2}}\left(1+\frac{|z|}{r_Q}\right)^{-n-1}|f(z)| \, d\mu(z)\\
&\leq \sum_{k_0\leq k<0}\int_{\{2^kr_Q\leq |z|<2^{k+1}r_Q\}\cap \R^m} \frac{C}{r_Q^n|z|^{m-2}}|f(z)| \, d\mu(z)\\
&\quad+\int_{\{|z|\leq 2^{k_0}r_Q\}\cap\R^m} \frac{C}{r_Q^n|z|^{m-2}}|f(z)|\, d\mu(z)\\
&=:II_{1}+II_{2}.
\end{align*}
To continue, we first point out that for $k\geq k_0$ and $\{|z|<2^kr_Q\}$, we can choose a ball $B(y_k,2r_Q)\subset M$ with radius $2r_Q$ such that
 $y_k\in \Rf^n$, $(\{|z|<2^kr_Q\}\cap \R^m )\cup Q\subset B(y_k,2r_Q)$ and $\mu(B(y_k,2r_Q))\sim (2^kr_Q)^m$.
 Then note that $r_Q\geq 1$ and $n\geq 3$,
\begin{align*}
II_{1}&\leq \sum_{k_0\leq k<0}\int_{\{2^kr_Q\leq |z|<2^{k+1}r_Q\}\cap \R^m} \frac{C}{r_Q^n|z|^{m-2}}|f(z)| \, d\mu(z)\\
&\leq  \sum_{k_0\leq k<0} \frac{C(2^kr_Q)^2}{r_Q^n(2^kr_Q)^m}\int_{B(y_k,2r_Q)} |f(z)|\, d\mu(z)\\
&\leq  \sum_{k_0\leq k<0} 2^{2k}\frac{C}{r_Q^{n-2}}\frac{1}{\mu(B(y_k,2r_Q))}\int_{B(y_k,2r_Q)} |f(z)| \, d\mu(z)\\
&\leq  \sum_{k_0\leq k<0} 2^{2k} \mathcal{M}f(x)\\
&\leq  C \mathcal{M}f(x).
\end{align*}
For $II_{2}$, we can choose a ball $B(y_0,2r_Q)\subset M$ with radius $2r_Q$ such that
 $y_0\in \Rf^n$, $(\{|z|<2^{k_0}r_Q\}\cap \R^m) \cup Q\subset B(y_0,2r_Q)$ and $\mu(B(y_0,2r_Q))\sim r_Q^n$.
\begin{align*}
II_{2}&\leq \int_{\{|z|\leq 2^{k_0}r_Q\}\cap\R^m} \frac{C}{r_Q^n|z|^{m-2}}|f(z)|\, d\mu(z)\\
&\leq C  \int_{B(y_0,2r_Q)} \frac{C}{r_Q^n}|f(z)| \, d\mu(z)\\
&\leq C  \mathcal{M}f(x).
\end{align*}

{\bf Case IV}: $Q\in\mathcal {D}$ and $K\subset Q$. That is, $Q=Q_m\cup Q_n$ where a dyadic cube $Q_m\subset \R^m$ and $Q_m\in \mathcal{I}_1$ and dyadic cube $Q_n\subset \Rf^n$ and $Q_n\in \mathcal{I}_1$  and $\mu(Q_n)>\mu(Q_m)$. The proof of this case is quite
similar to that of Case III.

For each $y\in Q$,
\begin{align}
|\exp (-r_Q\sqrt L)f(y)|&\leq \int_{\R^m} p_{r_Q}(y,z)|f_m(z)| {\,} d\mu(z)+\int_{\Rf^n} p_{r_Q}(y,z)|f_n(z)| {\,} d\mu(z)\nonumber.
\end{align}
For every $z\in\Rf^n$, we have
$$
p_{r_Q}(y,z)\leq \frac{C}{r_Q^m}\bigg(1+\frac{d(y,z)}{r_Q}\bigg)^{-m-1}+\frac{C}{r_Q^n}\bigg(1+\frac{d(y,z)}{r_Q}\bigg)^{-n-1}
\leq \frac{C}{\mu(Q)}\bigg(1+\frac{d(y,z)}{r_Q}\bigg)^{-n-1}.
$$
Then if $x\in Q_n$,
\begin{align*}
\int_{\Rf^n} p_{r_Q}(y,z)|f_n(z)| {\,} d\mu(z)\leq \int_{\Rf^n} \frac{C}{\mu(Q)}\bigg(1+\frac{d(y,z)}{r_Q}\bigg)^{-n-1}|f_n(z)| {\,} d\mu(z)\leq C\mathcal{M}_n(f_n)(x).
\end{align*}
If $x\in Q_m$, for each annuls $\{2^kr_Q\leq |z|< 2^{k+1}r_Q\}\cap \Rf^n$ or $\{|z|\leq 2r_Q\}\cap \Rf^n$,  we  choose one or finite balls $\widetilde B_k$ or $\widetilde B_0$ centered in $\Rf^n$ with radius $2^{k+1}r_Q$ or $4r_Q$, which cover the set
$\{2^kr_Q\leq |z|< 2^{k+1}r_Q\}\cap \Rf^n$ or $\{|z|\leq 2r_Q\}\cap \Rf^n$ and covers $Q_m$ in the large end.  We obtain that
\begin{align*}
&\int_{\Rf^n} p_{r_Q}(y,z)|f_n(z)| {\,} d\mu(z)\\
&\leq \int_{\{|z|\leq 2r_Q\}\cap \Rf^n} \frac{C}{r_Q^n}\bigg(1+\frac{d(y,z)}{r_Q}\bigg)^{-n-1}|f_n(z)| {\,} d\mu(z)\\
&\quad+
\sum_{k\geq 1}\int_{\{2^kr_Q\leq |z|< 2^{k+1}r_Q\}\cap \Rf^n} \frac{C}{r_Q^n}\bigg(1+\frac{d(y,z)}{r_Q}\bigg)^{-n-1}|f_n(z)| {\,} d\mu(z)\\
&\leq \frac{C}{\mu(\widetilde B_0)}\int_{\widetilde B_0} |f_n(z)| {\,} d\mu(z)+
\sum_{k\geq 1}2^{-k(n+1)}\frac{C}{r_Q^n}\int_{\widetilde B_k} |f_n(z)| {\,} d\mu(z)\\
&\leq C\mathcal{M}f(x)+\sum_{k\geq 1}2^{-k} \frac{C}{\mu(\widetilde B_k)}\int_{\widetilde B_k} |f_n(z)| {\,} d\mu(z)\\
&\leq C\mathcal{M}f(x).
\end{align*}

For $z\in\R^m$, we have
$$
p_{r_Q}(y,z)\leq \frac{C}{r_Q^m}\bigg(1+\frac{d(y,z)}{r_Q}\bigg)^{-m-1}+\frac{C}{r_Q^n|z|^{m-2}}\bigg(1+\frac{|z|}{r_Q}\bigg)^{-n-1}.
$$
Then
\begin{align*}
\int_{\R^m} \frac{C}{r_Q^m}\bigg(1+\frac{d(y,z)}{r_Q}\bigg)^{-m-1}|f_m(z)| {\,} d\mu(z)\leq C\mathcal{M}(f)(x)
\end{align*}
It remains to control
\begin{align*}
&\int_{\R^m} \frac{C}{r_Q^n|z|^{m-2}}\bigg(1+\frac{|y|+|z|}{r_Q}\bigg)^{-n-1}|f_m(z)| {\,} d\mu(z)\\
&\leq
\int_{\{|z|>r_Q\}\cap \R^m} \frac{C}{r_Q^n|z|^{m-2}}\bigg(1+\frac{|z|}{r_Q}\bigg)^{-n-1}|f_m(z)| {\,} d\mu(z)+\int_{\{|z|\leq r_Q\}\cap\R^m} \frac{C}{r_Q^n|z|^{m-2}}\bigg(1+\frac{|z|}{r_Q}\bigg)^{-n-1}|f_m(z)| {\,} d\mu(z)\\
&=:I+II.
\end{align*}
For $I$, note that $|z|>r_Q>|x|$,
\begin{align*}
&\int_{\{|z|>r_Q\}\cap \R^m} \frac{C}{r_Q^n|z|^{m-2}}\bigg(1+\frac{|y|+|z|}{r_Q}\bigg)^{-n-1}|f_m(z)| {\,} d\mu(z)\\
&\leq \int_{\{|z|>r_Q\}\cap \R^m} \frac{C}{r_Q^n|z|^{m-2}}\bigg(\frac{|z|}{r_Q}\bigg)^{-n}|f_m(z)| {\,} d\mu(z)\\
&\leq \int_{\{|z|>r_Q\}\cap \R^m} \frac{C}{|z|^{n+m-2}}|f_m(z)| {\,} d\mu(z)\\
&\leq \int_{\{|z|>r_Q\}\cap \R^m} \frac{C}{|x|^{m}}|f_m(z)| {\,} d\mu(z)\\
&\leq C\frac{\|f\|_{L^1(M)}}{|x|^m}.
\end{align*}
For $II$, Fixed a nature number $k_0$ such that $(2^{k_0}r_Q)^{m}\sim r_Q^{n}$.
\begin{align*}
II&= \int_{\{|z|\leq r_Q\}\cap \R^m} \frac{C}{r_Q^n|z|^{m-2}}\left(1+\frac{|z|}{r_Q}\right)^{-n-1}|f(z)| {\,} d\mu(z)\\
&\leq \sum_{k_0\leq k<0}\int_{\{2^kr_Q\leq |z|<2^{k+1}r_Q\}\cap \R^m} \frac{C}{r_Q^n|z|^{m-2}}|f(z)| {\,} d\mu(z)\\
&\quad+\int_{\{|z|\leq 2^{k_0}r_Q\}\cap\R^m} \frac{C}{r_Q^n|z|^{m-2}}|f(z)| {\,} d\mu(z)\\
&=:II_{1}+II_{2}.
\end{align*}
To continue, we first point out that for $k\geq k_0$ and $\{|z|<2^kr_Q\}$, we can choose a ball $B(y_k,2r_Q)\subset M$ with radius $2r_Q$ such that
 $y_k\in \Rf^n$, $(\{|z|<2^kr_Q\}\cap \R^m )\cup Q\subset B(y_k,2r_Q)$ and $\mu(B(y_k,2r_Q))\sim (2^kr_Q)^m$.
 Note that $r_Q\geq 1$ and $n\geq 3$, we have
\begin{align*}
II_{1}&\leq \sum_{k_0\leq k<0}\int_{\{2^kr_Q\leq |z|<2^{k+1}r_Q\}\cap \R^m} \frac{C}{r_Q^n|z|^{m-2}}|f(z)| {\,} d\mu(z)\\
&\leq  \sum_{k_0\leq k<0} \frac{C(2^kr_Q)^2}{r_Q^n(2^kr_Q)^m}\int_{B(y_k,2r_Q)} |f(z)| {\,} d\mu(z)\\
&\leq  \sum_{k_0\leq k<0} 2^{2k}\frac{C}{r_Q^{n-2}}\frac{1}{\mu(B(y_k,2r_Q))}\int_{B(y_k,2r_Q)} |f(z)| {\,} d\mu(z)\\
&\leq  \sum_{k_0\leq k<0} 2^{2k} \mathcal{M}f(x)\\
&\leq  C \mathcal{M}f(x).
\end{align*}
For $II_{2}$, we can choose a ball $B(y_0,2r_Q)\subset M$ with radius $2r_Q$ such that
 $y_0\in \Rf^n$, $(\{|z|<2^{k_0}r_Q\}\cap \R^m) \cup Q\subset B(y_0,2r_Q)$ and $\mu(B(y_0,2r_Q))\sim r_Q^n$.
\begin{align*}
II_{2}&\leq \int_{\{|z|\leq 2^{k_0}r_Q\}\cap\R^m} \frac{C}{r_Q^n|z|^{m-2}}|f(z)| {\,} d\mu(z)\\
&\leq C  \int_{B(y_0,2r_Q)} \frac{C}{r_Q^n}|f(z)| {\,} d\mu(z)\\
&\leq C  \mathcal{M}f(x).
\end{align*}

For the following case, fixed $x\in M$ and $Q\in \mathcal{D}_2$.

{\bf Case V}: $Q\in \mathcal{I}_1$ and $Q\subset \R^m$. For each $y\in Q$, noting that $Q\in \mathcal{I}_1$ implies $r_Q>2$ and then $|y|\geq r_Q^{\frac{m-n}{m-2}}/2$ implies $y\in \R^m\backslash K$. So wherever $z$ is, we have
$$
p_{r_Q}(y,z)\leq \frac{C}{r_Q^m}.
$$
And $Q\in \mathcal{I}_1$ and $x\in Q$ also imply that $|x|<2 r_Q$. Then
$$
|\exp (-r_Q\sqrt L)f(y)|\leq \int_M p_{r_Q}(y,z)|f(z)| {\,} d\mu(z)\leq \int_M \frac{C}{r_Q^m} |f(z)| {\,} d\mu(z)\leq C\frac{\|f\|_{L^1}}{|x|^m}.
$$

The proof of Theorem \ref{th nontangential 2} is complete.
 \end{proof}

\medskip

\section{An equivalent characterization of the space $\bmo$:  proof of Theorem \ref{main0}}
\setcounter{equation}{0}

%In this section

%, we begin by introducing another space ${\rm BMO}_{L}^{\rho}(M)$, associated with $L$.

\begin{proof}[Proof of Theorem \ref{main0}]
First, we show that
\begin{align}\label{main0 inclusion1}
\bmo \subset \text{BMO}_L^\rho(M)
\end{align}
with $ \|f\|_{\text{BMO}_L^\rho(M)}\leq C\|f\|_{\bmo}$ for all $f\in \bmo.$

To see this, for any $f\in \bmo$, we need to show that
\eqref{BMO-def10} and \eqref{BMO-def20} implies \eqref{BMO-def1} and \eqref{BMO-def2}.

We divide all balls $B(x_B, r_B)$ to four classes:

1) $B\subset \Rf^n$ or $B\subset \R^m$ or $r_B\leq 2$;

2)  $K\subset B$, $x_B\in \R^m$ and $r_B>2$;

3) $K\subset B$, $x_B\in \Rf^n$ and $r_{B^m}\leq r_{B}^{\frac{n-\rho}{m-\rho}}$;

4) $B\in \mathcal{B}_0$, that is, $K\subset B$, $x_B\in \Rf^n$ and $r_{B^m}>r_{B}^{\frac{n-\rho}{m-\rho}}$.

%Note that set $\mathcal{B}^{\varepsilon_1}_0\subset \mathcal{B}^{\varepsilon_2}_0$, so it is obvious that $\text{BMO}_L^{\varepsilon_1}(M)\subset \text{BMO}_L^{\varepsilon_2}(M)$ and
%$$
%\|f\|_{\text{BMO}_L^{\varepsilon_2}(M)}\leq \|f\|_{\text{BMO}_L^{\varepsilon_1}(M)}.
%$$
For class 1) it is  easy to check that \eqref{BMO-def1} holds.

For class 2), denote $B^n=B\cap \Rf^n$ and $B^m=B\cap \R^m$. Then there exists a fixed number $C(M)$ which depends on the manifold $M$ only such that we can have $C(M)$ balls $B^m_k\subset \R^m$  with $r_{B^m_k}=r_B$ satisfying that $B^m\subset \bigcup\limits_{k=1}^{C(M)} B^m_k$, and $C(M)$ balls $B^n_\ell\subset \Rf^n$  with $r_{B^n_\ell}=r_{B^n}<r_B$ satisfying that $B^n\subset \bigcup\limits_{\ell=1}^{C(M)} B^n_\ell$.
Based this fundamental fact on covering, we have
\begin{eqnarray*}
&&\frac{1}{\mu(B)}\int_B |f(x)-A_{r_B}f(x)| {\,} d\mu(x)\\
&&\leq \frac{1}{\mu(B)}\int_{B^m} |f(x)-A_{r_B}f(x)| {\,} d\mu(x)+\frac{1}{\mu(B)}\int_{B^n} |f(x)-A_{r_B}f(x)| {\,} d\mu(x)\\
&&\leq \sum_{k=1}^{C(M)} \frac{1}{\mu(B)}\int_{B^m_k} |f(x)-A_{r_B}f(x)| {\,} d\mu(x)+ \sum_{\ell=1}^{C(M)} \frac{1}{\mu(B)}\int_{B^n_\ell} |f(x)-A_{r_{B^n}}f(x)| {\,} d\mu(x)\\
&&\quad\quad+ \sum_{\ell=1}^{C(M)} \frac{1}{\mu(B)}\int_{B^n_\ell} |A_{r_B}f(x)-A_{r_{B^n}}f(x)| {\,} d\mu(x)\\
&&=: I+II+III.
\end{eqnarray*}

%Now we prove the other direction. Assume that $f\in \text{BMO}_L^{\varepsilon_2}(M)$. For all that balls on $M$, we have three cases:
%
%Case 1: $B\in \mathcal{B}^{\varepsilon_1}_0\subset \mathcal{B}^{\varepsilon_2}_0$. In this case, it is obvious that
%$$
%\frac{1}{\log r_B\mu(B)}\int_B |f(x)-A_{r_B^2}f(x)| {\,} d\mu(x)\leq \|f\|_{\text{BMO}_L^{\varepsilon_2}(M)}.
%$$
%
%Case 2: $B\in \mathcal{B}^{\varepsilon_2}_1\subset \mathcal{B}^{\varepsilon_1}_1$. In this case, it is obvious that
%$$
%\frac{1}{\mu(B)}\int_B |f(x)-A_{r_B^2}f(x)| {\,} d\mu(x)\leq \|f\|_{\text{BMO}_L^{\varepsilon_2}(M)}.
%$$
%
%Case 3: $B\in \mathcal{B}^{\varepsilon_1}_1\backslash \mathcal{B}^{\varepsilon_2}_1$. In this case,
%$$
%r_B^{\frac{n-\varepsilon_2}{m-\varepsilon_2}}<r_{B^m}\leq r_B^{\frac{n-\varepsilon_1}{m-\varepsilon_1}}.
%$$
%Now take a new ball $B'$ such that the center of $B'$ is in $\Rf^n$, $B^m\subset B'$ and $\mu(B')=2\mu(B^m)$. So the radius of $B'$ is
%$r_{B^m}^{m/n}$. Then we can have finite number of balls $B_k\subset \Rf^n$  with $r_{B_k}=r_B$ such that $B\backslash B'\subset \cup_k B_k$.
%By these facts,
%\begin{eqnarray*}
%&&\frac{1}{\mu(B)}\int_B |f(x)-A_{r_B^2}f(x)| {\,} d\mu(x)\\
%&&\leq \sum_k \frac{1}{\mu(B)}\int_{B_k} |f(x)-A_{r_B^2}f(x)| {\,} d\mu(x)+ \frac{1}{\mu(B)}\int_{B'} |f(x)-A_{r_{B'}^2}f(x)| {\,} d\mu(x)\\
%&&\quad\quad+\frac{1}{\mu(B)}\int_{B'} |A_{r_{B}^2}f(x)-A_{r_{B'}^2}f(x)| {\,} d\mu(x)\\
%&&=: I+II+III.
%\end{eqnarray*}
For term $I$ and $II$, since $\mu(B^m_k)\leq \mu(B)$, $\mu(B^n_\ell)\leq \mu(B)$ and $B^m_k\subset \R^m$, $B^n_\ell\subset \Rf^n$, it follows from \eqref{BMO-def10} that
$$
I+II\leq \sum_{k=1}^{C(M)} \|f\|_{\bmo}+\sum_{\ell=1}^{C(M)} \|f\|_{\bmo}\leq C\|f\|_{\bmo}.
$$
For term $III$, note that $\mu(B)\approx r_B^m$ and $\mu(B^n_\ell)=r_{B^n}^n$. It follows from the fact that $r_{B^n}<r_B$ and from \eqref{BMO-def20}    that
$$
III\leq \sum_{\ell=1}^{C(M)} \frac{r_{B^n}^n}{r_B^m}\left(1+\left|\log\frac{r_{B^n}}{r_B}\right|\right)\|f\|_{\bmo}\leq \sum_{\ell=1}^{C(M)} \bigg(\frac{r_{B^n}}{r_B}\bigg)^n\left(1+\left|\log\frac{r_{B^n}}{r_B}\right|\right)\|f\|_{\bmo}\leq C\|f\|_{\bmo}.
$$
Combining the estimates for these three terms, we obtain that \eqref{BMO-def1} holds.

For class 3), similar to the above estimate, we have at most $C(M)$ balls $B^m_k\subset \R^m$  with $r_{B^m_k}=r_{B^m}$ such that $B^m\subset \bigcup\limits_{k=1}^{C(M)} B^m_k$ and at most $C(M)$ balls $B^n_\ell\subset \Rf^n$  with $r_{B^n_\ell}=r_B$ such that $B^n\subset \bigcup\limits_{\ell=1}^{C(M)} B^n_\ell$.
By these facts,
\begin{eqnarray*}
&&\frac{1}{\mu(B)}\int_B |f(x)-A_{r_B}f(x)| {\,} d\mu(x)\\
%&&\leq \frac{1}{\mu(B)}\int_{B^m} |f(x)-A_{r_B}f(x)| {\,} d\mu(x)+\frac{1}{\mu(B)}\int_{B^n} |f(x)-A_{r_B}f(x)| {\,} d\mu(x)\\
&&\leq \sum_{k=1}^{C(M)} \frac{1}{\mu(B)}\int_{B^m_k} |f(x)-A_{r_{B^m}}f(x)| {\,} d\mu(x)+ \sum_{\ell=1}^{C(M)} \frac{1}{\mu(B)}\int_{B^n_\ell} |f(x)-A_{r_{B}}f(x)| {\,} d\mu(x)\\
&&\quad\quad+ \sum_{k=1}^{C(M)} \frac{1}{\mu(B)}\int_{B^m_k} |A_{r_B}f(x)-A_{r_{B^m}}f(x)| {\,} d\mu(x)\\
&&=: I+II+III.
\end{eqnarray*}
For the terms $I$ and $II$, %since $\mu(B^m_k)\leq \mu(B)$, $\mu(B^n_\ell)\leq \mu(B)$ and $B^m_k\subset \R^m$, $B^n_\ell\subset \Rf^n$, it follows from \eqref{BMO-def10} that
similar to the argument in class 2) we have
$
I+II\leq C\|f\|_{\bmo}.
$

For term $III$, note that $\mu(B)\geq r_B^n$ and $\mu(B^m_k)=r_{B^m}^m$. It follows from \eqref{BMO-def20},
$$
III<\sum_{\ell=1}^{C(M)}  \frac{r_{B^m}^m}{r_B^n}\left(1+\left|\log\frac{r_{B^m}}{r_B}\right|\right)\|f\|_{\bmo}.
$$
To continue, we consider the following two cases. For $r_{B^m}\leq 1$, it is direct that
$$
III<\sum_{\ell=1}^{C(M)} \bigg( \frac{r_{B^m}}{r_B}\bigg)^n\left(1+\left|\log\frac{r_{B^m}}{r_B}\right|\right)\|f\|_{\bmo}\leq C\|f\|_{\bmo}.
$$
For $r_{B^m}>1$, in class 3), $r_{B^m}\leq r_{B}^{\frac{n-\rho}{m-\rho}}$ and it leads
$$
III<\sum_\ell r_{B}^{-\frac{(m-n)\rho}{m-\rho}}\left(1+\log r_B\right)\|f\|_{\bmo}\leq C\|f\|_{\bmo}.
$$
Combining the estimates for these three terms, we obtain that \eqref{BMO-def1} holds.

For class 4), again we  have at most $C(M)$ balls $B^m_k\subset \R^m$  with $r_{B^m_k}=r_{B^m}$ such that $B^m\subset \bigcup\limits_{k=1}^{C(M)} B^m_k$ and finite number of balls $B^n_\ell\subset \Rf^n$  with $r_{B^n_\ell}=r_B$ such that $B^n\subset \bigcup\limits_{\ell=1}^{C(M)} B^n_\ell$.
By these facts,
\begin{eqnarray*}
&&\frac{1}{\mu(B)}\int_B |f(x)-A_{r_B}f(x)| {\,} d\mu(x)\\
%&&\leq \frac{1}{\mu(B)}\int_{B^m} |f(x)-A_{r_B^2}f(x)| {\,} d\mu(x)+\frac{1}{\mu(B)}\int_{B^n} |f(x)-A_{r_B^2}f(x)| {\,} d\mu(x)\\
&&\leq \sum_k \frac{1}{\mu(B)}\int_{B^m_k} |f(x)-A_{r_{B^m}}f(x)| {\,} d\mu(x)+ \sum_\ell \frac{1}{\mu(B)}\int_{B^n_\ell} |f(x)-A_{r_{B}}f(x)| {\,} d\mu(x)\\
&&\quad\quad+ \sum_k \frac{1}{\mu(B)}\int_{B^m_k} |A_{r_B}f(x)-A_{r_{B^m}}f(x)| {\,} d\mu(x)\\
&&=: I+II+III.
\end{eqnarray*}
%For term $I$ and $II$, since $\mu(B^m_k)\leq \mu(B)$, $\mu(B^n_\ell)\leq \mu(B)$ and $B^m_k\subset \R^m$, $B^n_\ell\subset \Rf^n$, it follows from \eqref{BMO-def10} that
%$$
%I\leq \sum_k \|f\|_{\bmo}+\sum_\ell \|f\|_{\bmo}\leq C\|f\|_{\bmo}.
%$$
Again, for the terms $I$ and $II$,
similar to the argument in class 2) we have
$
I+II\leq C\|f\|_{\bmo}.
$
%Thus, these two terms imply that \eqref{BMO-def1} holds.

For term $III$, note that $\mu(B^m)\leq \mu(B)$. It follows from \eqref{BMO-def20} that
$$
III<\sum_\ell \frac{\mu(B^m)}{\mu(B)}\left(1+\left|\log\frac{r_{B^m}}{r_B}\right|\right)\|f\|_{\bmo}\leq C\log r_B \|f\|_{\bmo}.
$$
This implies that  \eqref{BMO-def2} holds.

Combining all estimates in class 1)---class 4), we obtain that \eqref{main0 inclusion1} holds and it is clear that
$ \|f\|_{\text{BMO}_L^\rho(M)}\leq C\|f\|_{\bmo}$ for all $f\in \bmo$.

\bigskip

Next, we show that
\begin{align}\label{main0 inclusion2}
\bmo \supset \text{BMO}_L^\rho(M)
\end{align}
with $\|f\|_{\bmo} \leq C\|f\|_{\text{BMO}_L^\rho(M)}$ for all $f\in \text{BMO}_L^\rho(M) .$

To see this, we will show that \eqref{BMO-def1} and \eqref{BMO-def2} implies \eqref{BMO-def10} and \eqref{BMO-def20}  for $f\in \text{BMO}_L^\rho(M) .$

In fact, suppose $f\in \text{BMO}_L^\rho(M) $, then \eqref{BMO-def10} follows direcly from \eqref{BMO-def1} with the constant $\|f\|_{\text{BMO}_L^\rho(M)}$. Hence it remains to prove \eqref{BMO-def20}.
We point out that it suffices to
 prove that
 there exists a positive constant $C$ such that for every $f\in \text{BMO}_L^\rho(M) $, $x\in M$  and for every $s,t>0$ with  $s/4<t\leq s$,
\begin{align}\label{key step 4.1 pre}
|A_sf(x)-A_{s+t}f(x)|\leq C\|f\|_{\text{BMO}_L^\rho(M)}.
\end{align}
In fact, if \eqref{key step 4.1 pre} holds, then
for every $f\in \text{BMO}_L^\rho(M)$, $x\in M$  and for every $s,t>0$ with $0<t<s/4$, we have
\begin{eqnarray*}
|A_sf(x)-A_{s+t}f(x)|\leq |A_sf(x)-A_{s+s}f(x)|+|A_{s+s}f(x)-A_{s+t}f(x)|\leq 2C\|f\|_{\text{BMO}_L^\rho(M)}.
\end{eqnarray*}
Combining this estimate and the one in \eqref{key step 4.1 pre} we have obtain that for  every $s,t>0$ with $t<s$,
\begin{eqnarray*}
|A_sf(x)-A_{s+t}f(x)|\leq 2C\|f\|_{\text{BMO}_L^\rho(M)}.
\end{eqnarray*}
As a consequence, for any $\mathcal K>1$, by chooisng $\ell$ to be the integer satisfying $2^\ell \leq \mathcal K < 2^\ell+1$ (hence $\ell \leq 2\log \mathcal K$) we obtain that
\begin{align*}
|A_sf(x)-A_{\mathcal Ks}f(x)|&\leq \sum_{k=0}^{\ell-1}|A_{2^ks}f(x)-A_{2^{k+1}s}f(x)|+|A_{2^{\ell}s}f(x)-A_{\mathcal Ks}f(x)|\\
&\leq C(1+\log \mathcal K)\|f\|_{\text{BMO}_L^\rho(M)},
\end{align*}
which implies that \eqref{BMO-def20} holds.

We now prove \eqref{key step 4.1 pre}.
First note that
\begin{align*}
|A_sf(x)-A_{s+t}f(x)|=|A_s(f-A_tf)(x)|
\leq \int_M p_s(x,y)|f(y)-A_tf(y)| {\,} d\mu(y).
\end{align*}
Note that for the following proof of this lemma, we only need the upper bound of the Poisson kernel. For the Poisson kernel, because of
$s/4<t<s$, we can control the upper bound of $p_s(x,y)$ by $4^{m+n+1}p_t(x,y)$. So for what following, we write
\begin{align}\label{AtAs integral}
|A_sf(x)-A_{s+t}f(x)|=|A_s(f-A_tf)(x)|
\leq C\int_M p_t(x,y)|f(y)-A_tf(y)| {\,} d\mu(y).
\end{align}

To begin with, for $x\in M$, $t>0$ and $k\geq1$, we set
annulus
\begin{align}\label{annuli}
B(x,t,k):=\{ z\in M: 2^{k-1}t\leq d(x,z)<2^kt \}.
\end{align}

{\bf Case I}: $0<t\leq 1$.

By  writing the integral in the right-hand side of \eqref{AtAs integral} into two parts, we have
\begin{align*}
|A_sf(x)-A_{s+t}f(x)|%&\leq C\int_M p_t(x,y)|f(y)-A_tf(y)| {\,} d\mu(y)\\
&\leq C\int_{\Rf^n} p_t(x,y)|f(y)-A_tf(y)| {\,} d\mu(y)+C\int_{\R^m\backslash K} p_t(x,y)|f(y)-A_tf(y)| {\,} d\mu(y)\\
&=:I+II.
\end{align*}

We first estimate the term $I$.

From Proposition~\ref{prop2.1}, we get that in this case, for $x\in M$, the Poisson kernel is bounded by
$$
p_t(x,y)\leq \frac{C}{t^m}\left(1+\frac{d(x,y)}{t}\right)^{-m-1}+\frac{C}{t^n}\left(1+\frac{d(x,y)}{t}\right)^{-n-1}.
$$
Then
\begin{align*}
I&\leq C\int_{\Rf^n} \frac{C}{t^m}\left(1+\frac{d(x,y)}{t}\right)^{-m-1}|f(y)-A_tf(y)| {\,} d\mu(y)\\
&\quad+
C\int_{\Rf^n} \frac{C}{t^n}\left(1+\frac{d(x,y)}{t}\right)^{-n-1}|f(y)-A_tf(y)| {\,} d\mu(y)\\
&=:I_1+I_2.
\end{align*}

Then by using the annuli \eqref{annuli} we have
\begin{align*}
I_1%&\leq \int_{\Rf^n} \frac{C}{t^m}\left(1+\frac{d(x,y)}{t}\right)^{-m-1}|f(y)-A_tf(y)| {\,} d\mu(y)\\
&\leq \int_{B(x,t)\cap \Rf^n} \frac{C}{t^m}|f(y)-A_tf(y)| {\,} d\mu(y)\\
&\quad+\sum_{k\geq 1}\int_{B(x,t,k)\cap \Rf^n} \frac{C}{t^{m}}\left(1+\frac{d(x,y)}{t}\right)^{-m-1}|f(y)-A_tf(y)| {\,} d\mu(y)\\
&=: I_{11}+I_{12}.
\end{align*}
It follows from \eqref{BMO-def1} directly that the first term $I_{11}$ is bounded by
$
\|f\|_{\text{BMO}_L^\rho(M)}
$ since when $t\leq1$, $B(x,t)$ is in $\mathcal B_1^\rho$.
%
%For $k\geq 1$, there is at most one $k_0$ such that $K\subset B(x,s,k_0)$. $B(x,s,k_0)$ can be divided to two
%part, $B(x,s,k_0)=C(x,k_0,s,n)\cup C(x,k_0,s,m)$ where $C(x,k_0,s,n)=B(x,s,k_0)\cap (\mathfrak{R}^n\backslash K\cup K)$
%and $C(x,k_0,s,m)=B(x,s,k_0)\cap (\mathfrak{R}^m\backslash K\cup K)$. $C(x,k_0,s,n)$ and $C(x,k_0,s,m)$ is doubling and
%the doubling dimension is $m$.
%
As for the second term, for each $k\geq 1$, we first note that for $y\in B(x,t,k)\cap \Rf^n$,
$$
\frac{C}{t^{m}}\left(1+\frac{d(x,y)}{t}\right)^{-m-1}\leq  \frac{C}{t^{m}}2^{-k(m+1)}.
$$
Next, if $2^kt<1$, we can choose at most $(2^{k}t)^{m}/t^m$ of balls $B_\ell(t)$ in $\Rf^n$ with radius $t$ to cover $B(x,t,k)\cap \Rf^n$.
If $2^kt\geq 1$, we can choose at most $(2^{k}t)^{n}/t^m$ of balls $B_\ell(t)$ in $\Rf^n$ with radius $t$ to cover $B(x,t,k)\cap \Rf^n$.
In each case, we can choose at most $2^{km}$ of balls $B_\ell(t)$ with radius $t$ to cover $B(x,t,k)\cap \Rf^n$. Thus, the second term
\begin{align*}
I_{12}%&\leq \sum_{k\geq 1}\int_{B(x,s,k)} \frac{C}{s^{m\over2}}\exp(-c_0\frac{d(x,y)^2}{s})|f(y)-A_sf(y)| {\,} d\mu(y)\\
&\leq \sum_{k\geq 1}\int_{B(x,t,k)\cap \Rf^n} \frac{C}{t^m}2^{-k(m+1)}|f(y)-A_tf(y)| {\,} d\mu(y)\\
&\leq C \sum_{k\geq 1}2^{km}2^{-k(m+1)} \sup_\ell \frac{1}{t^m}\int_{B_\ell(t)} |f(y)-A_tf(y)| {\,} d\mu(y)\\
&\leq C \sup_\ell\frac{1}{\mu(B_\ell(t))}\int_{B_\ell(t)}|f(y)-A_tf(y)| {\,} d\mu(y)\\
&\leq C \|f\|_{\text{BMO}_L^\rho(M)},
\end{align*}
where the last inequality follows from \eqref{BMO-def1} since each $B_\ell(t)$ is in $\mathcal B_1^\rho$.

For the term $I_2$, using the annuli again, we have
\begin{align*}
I_2 %&\leq \int_{\Rf^n} \frac{C}{t^n}\left(1+\frac{d(x,y)}{t}\right)^{-n-1}|f(y)-A_tf(y)| {\,} d\mu(y)\\
&\leq \int_{B(x,t)\cap \Rf^n} \frac{C}{t^n}|f(y)-A_tf(y)| {\,} d\mu(y)\\
&\quad+\sum_{k\geq 1,2^{k}t<1}\int_{B(x,t,k)\cap \Rf^n} \frac{C}{t^{n}}\left(1+\frac{d(x,y)}{t}\right)^{-n-1}|f(y)-A_tf(y)| {\,} d\mu(y)\\
&\quad+\sum_{k\geq 1, 2^{k}t\geq 1}\int_{B(x,t,k)\cap \Rf^n} \frac{C}{t^{n}}\left(1+\frac{d(x,y)}{t}\right)^{-n-1}|f(y)-A_tf(y)| {\,} d\mu(y)\\
&=: I_{21}+I_{22}+I_{23}.
\end{align*}
For the first term, since $B(x,t)$ is in $\mathcal B_1^\rho$ and ${1\over t^n} \leq  {C\over t^m}$,  from \eqref{BMO-def1}, we get that
$
I_{21}\leq C\|f\|_{\text{BMO}_L^\rho(M)}.
$

For the term $I_{22}$, we  choose at most $2^{km}$ of balls $B_\ell(t)$ in $\Rf^n$ with radius $t$ to cover $B(x,t,k)\cap \Rf^n$. Thus,
\begin{align*}
I_{22}%&\leq \sum_{k\geq 1}\int_{B(x,s,k)} \frac{C}{s^{m\over2}}\exp(-c_0\frac{d(x,y)^2}{s})|f(y)-A_sf(y)| {\,} d\mu(y)\\
&\leq \sum_{k\geq 1,2^kt\leq 1}\int_{B(x,t,k)\cap \Rf^n} \frac{C}{t^n}2^{-k(n+1)}|f(y)-A_tf(y)| {\,} d\mu(y)\\
&\leq C \sum_{k\geq 1,2^kt\leq 1}2^{km}2^{-k(n+1)}t^{m-n} \sup_\ell \frac{1}{t^m}\int_{B_\ell(t)} |f(y)-A_tf(y)| {\,} d\mu(y)\\
&\leq C \sup_\ell\frac{1}{\mu(B_\ell(t))}\int_{B_\ell(t)}|f(y)-A_tf(y)| {\,} d\mu(y)\\
&\leq C \|f\|_{\text{BMO}_L^\rho(M)},
\end{align*}
where the last inequality follows from \eqref{BMO-def1} since each $B_\ell(t)$ is in $\mathcal B_1^\rho$.

For $I_{23}$, we can choose at most $(2^{k}t)^n/t^m$ of balls $B_\ell(t)$ with radius $t$ to cover $B(x,t,k)\cap \Rf^n$. Thus,
\begin{align*}
I_{23}%&\leq \sum_{k\geq 1}\int_{B(x,s,k)} \frac{C}{s^{m\over2}}\exp(-c_0\frac{d(x,y)^2}{s})|f(y)-A_sf(y)| {\,} d\mu(y)\\
&\leq \sum_{2^kt> 1}\int_{B(x,t,k)\cap \Rf^n} \frac{C}{t^n}2^{-k(n+1)}|f(y)-A_tf(y)| {\,} d\mu(y)\\
&\leq C \sum_{2^kt> 1}\frac{(2^{k}t)^n}{t^m}2^{-k(n+1)}t^{m-n} \sup_\ell \frac{1}{t^m}\int_{B_\ell(t)} |f(y)-A_tf(y)| {\,} d\mu(y)\\
&\leq C \sup_\ell\frac{1}{\mu(B_\ell(t))}\int_{B_\ell(t)}|f(y)-A_tf(y)| {\,} d\mu(y)\\
&\leq C \|f\|_{\text{BMO}_L^\rho(M)},
\end{align*}
where the last inequality follows from \eqref{BMO-def1} since each $B_\ell(t)$ is in $\mathcal B_1^\rho$.

We now estimate the term $II$.

From Proposition~\ref{prop2.1}, we get that in this case, for $x\in M$, the Poisson kernel is bounded by
$$
p_t(x,y)\leq \frac{C}{t^m}\left(1+\frac{d(x,y)}{t}\right)^{-m-1}+\frac{C}{t^n|y|^{m-2}}\left(1+\frac{|y|}{t}\right)^{-n-1}.
$$
Then
\begin{align*}
II&\leq C\int_{\R^m\backslash K} \frac{C}{t^m}\left(1+\frac{d(x,y)}{t}\right)^{-m-1}|f(y)-A_tf(y)| {\,} d\mu(y)\\
\quad&+
C\int_{\R^m\backslash K} \frac{C}{t^n|y|^{m-2}}\left(1+\frac{|y|}{t}\right)^{-n-1}|f(y)-A_tf(y)| {\,} d\mu(y)\\
&=:II_1+II_2.
\end{align*}

Similar to $I_1$, we can get that  $II_1$ is bounded by $C\|f\|_{\text{BMO}_L^\rho(M)}$.

We now consider $II_2$.
\begin{align*}
II_2&\leq \int_{\R^m\backslash K} \frac{C}{t^n|y|^{m-2}}\left(1+\frac{|y|}{t}\right)^{-n-1}|f(y)-A_tf(y)| {\,} d\mu(y)\\
&\leq \sum_{2^{k}t\geq 1}\int_{\{y\in \R^m\backslash K:\ 2^kt\leq |y|<2^{k+1}t\}} \frac{C}{t^n|y|^{m-2}}\left(1+\frac{|y|}{t}\right)^{-n-1}|f(y)-A_tf(y)| {\,} d\mu(y).
\end{align*}
We can choose at most $2^{km}$ of balls $B_\ell(t)$ with radius $t$ to cover ${\{y\in \R^m\backslash K:\ 2^kt\leq |y|<2^{k+1}t\}}$. Thus, we have
\begin{align*}
II_{2}%&\leq \sum_{k\geq 1}\int_{B(x,s,k)} \frac{C}{s^{m\over2}}\exp(-c_0\frac{d(x,y)^2}{s})|f(y)-A_sf(y)| {\,} d\mu(y)\\
&\leq \sum_{2^kt> 1}\int_{\{y\in \R^m\backslash K:\ 2^kt\leq |y|<2^{k+1}t\}} \frac{C}{t^n|y|^{m-2}}2^{-k(n+1)}|f(y)-A_tf(y)| {\,} d\mu(y)\\
&\leq C \sum_{2^kt> 1}2^{km}2^{-k(n+1)}t^{m-n}(2^kt)^{2-m} \sup_\ell \frac{1}{t^m}\int_{B_\ell(t)} |f(y)-A_tf(y)| {\,} d\mu(y)\\
&\leq C \sum_{2^kt> 1}(2^kt)^{2-n}2^{-k} \sup_\ell \frac{1}{t^m}\int_{B_\ell(t)} |f(y)-A_tf(y)| {\,} d\mu(y)\\
&\leq C \sup_\ell\frac{1}{\mu(B_\ell(t))}\int_{B_\ell(t)}|f(y)-A_tf(y)| {\,} d\mu(y)\\
&\leq C \|f\|_{\text{BMO}_L^\rho(M)},
\end{align*}
where the last inequality follows from \eqref{BMO-def1} since each $B_\ell(t)$ is in $\mathcal B_1^\rho$.

Combining the estimates of the terms $I$ and $II$ we obtain that \eqref{key step 4.1 pre} holds for $t\leq1$.

{\bf Case II}: $t>1$.  %We now consider the case $t>1$.

%Note that the measure of the centre $K$ is $1$.
Again, by  writing the integral in the right-hand side of \eqref{AtAs integral} into two parts, we have
\begin{align*}
|A_tf(x)-A_{s+t}f(x)|
%&\leq \int_M p_t(x,y)|f(y)-A_tf(y)| {\,} d\mu(y)\\
&\leq \int_{\Rf^n} p_t(x,y)|f(y)-A_tf(y)| {\,} d\mu(y)+\int_{\R^m\backslash K} p_t(x,y)|f(y)-A_tf(y)| {\,} d\mu(y)\\
&=:I+II.
\end{align*}

We first estimate $I$.

From Proposition~\ref{prop2.1}, we see that in this case, for $x\in M$, the Poisson kernel is bounded by
$$
p_t(x,y)\leq \frac{C}{t^n}\left(1+\frac{d(x,y)}{t}\right)^{-n-1}.
$$
Then we have
\begin{align*}
I&\leq
\int_{\Rf^n} \frac{C}{t^n}\left(1+\frac{d(x,y)}{t}\right)^{-n-1}|f(y)-A_tf(y)| {\,} d\mu(y).
\end{align*}

By decomposing $\Rf^n$ into annuli as in \eqref{annuli},  we have
\begin{align*}
I%&\leq \int_{\Rf^n} \frac{C}{t^n}\left(1+\frac{d(x,y)}{t}\right)^{-n-1}|f(y)-A_tf(y)| {\,} d\mu(y)\\
&\leq \int_{B(x,t)\cap \Rf^n} \frac{C}{t^n}|f(y)-A_tf(y)| {\,} d\mu(y)\\
&\quad+\sum_{k\geq 1}\int_{B(x,t,k)\cap\Rf^n} \frac{C}{t^{n}}\left(1+\frac{d(x,y)}{t}\right)^{-n-1}|f(y)-A_tf(y)| {\,} d\mu(y)\\
&=: I_{1}+I_{2}.
\end{align*}
For the  term $I_{1}$,  we can choose at most $2^{n}$ of balls $B_\ell(t)\subset \Rf^n$ with radius $t$
to cover $B(x,t)\cap \Rf^n$.  Then
$$
I_{1}\leq
 C\sup_{\ell}\frac{1}{\mu(B_\ell(t))}\int_{B_\ell(t)}|f(y)-A_tf(y)| {\,} d\mu(y)\leq C\|f\|_{\text{BMO}_L^\rho(M)},
$$
where the last inequality follows from \eqref{BMO-def1} since each $B_\ell(t)$ is in $\mathcal B_1^\rho$.

As for the second term, for each $k\geq 1$, we first note that for $y\in B(x,t,k)\cap\Rf^n$,
$$
\frac{C}{t^{n}}\left(1+\frac{d(x,y)}{t}\right)^{-n-1}\leq  \frac{C}{t^{n}}2^{-k(n+1)}.
$$
Next, we can choose at most $2^{kn}$ of balls $B_\ell(t)\subset \Rf^n$ with radius $t$ to cover $B(x,t,k)\cap\Rf^n$. Thus,
\begin{align*}
I_{2}%&\leq \sum_{k\geq 1}\int_{B(x,s,k)} \frac{C}{s^{m\over2}}\exp(-c_0\frac{d(x,y)^2}{s})|f(y)-A_sf(y)| {\,} d\mu(y)\\
&\leq \sum_{k\geq 1}\int_{B(x,t,k)\cap\Rf^n} \frac{C}{t^n}2^{-k(n+1)}|f(y)-A_tf(y)| {\,} d\mu(y)\\
&\leq C \sum_{k\geq 1}2^{kn}2^{-k(n+1)} \sup_\ell \frac{1}{\mu(B_\ell(t))}\int_{B_\ell(t)} |f(y)-A_tf(y)| {\,} d\mu(y)\\
%&\leq C \sup_\ell\frac{1}{\mu(B_\ell(t))}\int_{B_\ell(t)}|f(y)-A_tf(y)| {\,} d\mu(y)\\
&\leq C \|f\|_{\text{BMO}_L^\rho(M)},
\end{align*}
where the last inequality follows from \eqref{BMO-def1} since each $B_\ell(t)$ is in $\mathcal B_1^\rho$.
%For $y\in \R^m$, the Poisson kernel is bounded by

We now consider the term $II$.
From Proposition~\ref{prop2.1}, we see that in this case, for $x\in M$, the Poisson kernel is bounded by
$$
p_t(x,y)\leq \frac{C}{t^m}\left(1+\frac{d(x,y)}{t}\right)^{-m-1}+\frac{C}{t^n|y|^{m-2}}\left(1+\frac{|y|}{t}\right)^{-n-1}.
$$
Then we have
\begin{align*}
II&\leq \int_{\R^m\backslash K} \frac{C}{t^m}\left(1+\frac{d(x,y)}{t}\right)^{-m-1}|f(y)-A_tf(y)| {\,} d\mu(y)\\
&\quad+
\int_{\R^m\backslash K} \frac{C}{t^n|y|^{m-2}}\left(1+\frac{|y|}{t}\right)^{-n-1}|f(y)-A_tf(y)| {\,} d\mu(y)\\
&=:II_1+II_2.
\end{align*}
%Set annulus $B(x,t,k)=B(x,2^kt)\backslash B(x,2^{k-1}t)$
Now by decomposing $\R^m\backslash K$ into annuli we have
\begin{align*}
II_1&\leq \int_{\R^m\backslash K} \frac{C}{t^m}\left(1+\frac{d(x,y)}{t}\right)^{-m-1}|f(y)-A_tf(y)| {\,} d\mu(y)\\
&\leq \int_{B(x,t)\cap(\R^m\backslash K)} \frac{C}{t^m}|f(y)-A_tf(y)| {\,} d\mu(y)\\
&\quad+\sum_{k\geq 1}\int_{B(x,t,k)\cap (\R^m\backslash K)} \frac{C}{t^{m}}\left(1+\frac{d(x,y)}{t}\right)^{-m-1}|f(y)-A_tf(y)| {\,} d\mu(y)\\
&=: II_{11}+II_{12}.
\end{align*}
For the first term $II_{11}$, if $B(x,t)\cap(\R^m\backslash K)=\emptyset$, then there is nothing to prove. If $B(x,t)\cap(\R^m\backslash K)\not=\emptyset$, then there are at most $2^m$ balls $B_\ell(t)$ centered at $\R^m\backslash K$ with radius $t$ whose union covers $B(x,t)\cap(\R^m\backslash K)$. Hence, $II_{11}$
 is bounded by
$$
\sum_\ell \frac{C}{B_{\ell}(t)}\int_{B_\ell(t)}|f(y)-A_tf(y)| {\,} d\mu(y)\leq C\|f\|_{\text{BMO}_L^\rho(M)},
$$
where the last inequality follows from \eqref{BMO-def1} since each $B_\ell(t)$ is in $\mathcal B_1^\rho$.

%
%For $k\geq 1$, there is at most one $k_0$ such that $K\subset B(x,s,k_0)$. $B(x,s,k_0)$ can be divided to two
%part, $B(x,s,k_0)=C(x,k_0,s,n)\cup C(x,k_0,s,m)$ where $C(x,k_0,s,n)=B(x,s,k_0)\cap (\mathfrak{R}^n\backslash K\cup K)$
%and $C(x,k_0,s,m)=B(x,s,k_0)\cap (\mathfrak{R}^m\backslash K\cup K)$. $C(x,k_0,s,n)$ and $C(x,k_0,s,m)$ is doubling and
%the doubling dimension is $m$.
%
As for the second term, for each $k\geq 1$, we first note that for $y\in B(x,t,k)\cap (\R^m\backslash K)$,
$$
\frac{C}{t^{m}}\left(1+\frac{d(x,y)}{t}\right)^{-m-1}\leq  \frac{C}{t^{m}}2^{-k(m+1)}.
$$
Next, we can choose at most $2^{km}$ of balls $B_\ell(t)$ with radius $t$ to cover $B(x,t,k)\cap (\R^m\backslash K)$. Thus, the second term
\begin{align*}
II_{12}%&\leq \sum_{k\geq 1}\int_{B(x,s,k)} \frac{C}{s^{m\over2}}\exp(-c_0\frac{d(x,y)^2}{s})|f(y)-A_sf(y)| {\,} d\mu(y)\\
&\leq \sum_{k\geq 1}\int_{B(x,t,k)\cap (\R^m\backslash K)} \frac{C}{t^m}2^{-k(m+1)}|f(y)-A_tf(y)| {\,} d\mu(y)\\
&\leq C \sum_{k\geq 1}2^{km}2^{-k(m+1)} \sup_\ell \frac{1}{\mu(B_\ell(t))}\int_{B_\ell(t)} |f(y)-A_tf(y)| {\,} d\mu(y)\\
%&\leq C \sup_\ell\frac{1}{\mu(B_\ell(t))}\int_{B_\ell(t)}|f(y)-A_tf(y)| {\,} d\mu(y)\\
&\leq C \|f\|_{\text{BMO}_L^\rho(M)},
\end{align*}
where the last inequality follows from \eqref{BMO-def1} since each $B_\ell(t)$ is in $\mathcal B_1^\rho$.

It remains to estimate $II_2$. Fixed a nature number $k_0$ such that $(2^{k_0}t)^{m-\rho}\sim t^{n-\rho}$.
\begin{align*}
II_2&\leq \int_{\R^m\backslash K} \frac{C}{t^n|y|^{m-2}}\left(1+\frac{|y|}{t}\right)^{-n-1}|f(y)-A_tf(y)| {\,} d\mu(y)\\
&\leq \sum_{k\geq 0}\int_{\{y\in\R^m\backslash K:\ 2^kt\leq |y|<2^{k+1}t\}} \frac{C}{t^n|y|^{m-2}}\left(1+\frac{|y|}{t}\right)^{-n-1}|f(y)-A_tf(y)| {\,} d\mu(y)\\
&\quad+\sum_{k_0\leq k<0}\int_{\{y\in\R^m\backslash K:\ 2^kt\leq |y|<2^{k+1}t\}} \frac{C}{t^n|y|^{m-2}}|f(y)-A_tf(y)| {\,} d\mu(y)\\
&\quad+\int_{\{y\in\R^m\backslash K:\ |y|\leq 2^{k_0}t\}} \frac{C}{t^n|y|^{m-2}}|f(y)-A_tf(y)| {\,} d\mu(y)\\
&=:II_{21}+II_{22}+II_{23}.
\end{align*}
For $k>0$, we can choose at most $2^{km}$ of balls $B_\ell(t)\subset \R^m$ with radius $t$ to cover the annuli $\{y\in \R^m\backslash K:\ 2^kt\leq |y|<2^{k+1}t\}$. Thus,
\begin{align*}
II_{21}%&\leq \sum_{k\geq 1}\int_{B(x,s,k)} \frac{C}{s^{m\over2}}\exp(-c_0\frac{d(x,y)^2}{s})|f(y)-A_sf(y)| {\,} d\mu(y)\\
&\leq \sum_{k\geq 0}\int_{\{y\in\R^m\backslash K:\ 2^kt\leq |y|<2^{k+1}t\}} \frac{C}{t^n|y|^{m-2}}2^{-k(n+1)}|f(y)-A_tf(y)| {\,} d\mu(y)\\
&\leq C \sum_{k\geq 0}2^{km}2^{-k(n+1)}t^{m-n}(2^kt)^{2-m} \sup_\ell \frac{1}{\mu(B_\ell(t))}\int_{B_\ell(t)} |f(y)-A_tf(y)| {\,} d\mu(y)\\
&\leq C \sum_{k\geq 0}(2^kt)^{2-n}2^{-k} \|f\|_{\text{BMO}_L^\rho(M)}\\
%&\leq C \sup_\ell\frac{1}{\mu(B_\ell(t))}\int_{B_\ell(t)}|f(y)-A_tf(y)| {\,} d\mu(y)\\
&\leq C \|f\|_{\text{BMO}_L^\rho(M)},
\end{align*}
where the third inequality follows from \eqref{BMO-def1} since each $B_\ell(t)$ is in $\mathcal B_1^\rho$ and
the last inequality follows from the facts that $n\geq 3$ and that $t>1$.

To continue, we consider $II_2$. We first point out that for $ k_0\leq k<0$ and $\{y\in\R^m\backslash K:\ |y|<2^kt\}$, we can choose a ball $B(z_k,t)\subset M$ with radius $t$ such that
 $z_k\in \Rf^n$, $$\{y\in\R^m\backslash K:\ |y|<2^kt\}\subset B(z_k,t)$$ and that $\mu(B(z_k,t))\sim \max\{(2^kt)^m,t^n\}$. Then
\begin{align*}
II_{22}%&\leq \sum_{k\geq 1}\int_{B(x,s,k)} \frac{C}{s^{m\over2}}\exp(-c_0\frac{d(x,y)^2}{s})|f(y)-A_sf(y)| {\,} d\mu(y)\\
&\leq \sum_{k_0\leq k<0}\int_{\{y\in \R^m\backslash K:\ 2^kt\leq |y|<2^{k+1}t\}} \frac{C}{t^n|y|^{m-2}}|f(y)-A_tf(y)| {\,} d\mu(y)\\
&\leq C \sum_{k_0\leq k<0}\max\{(2^kt)^m,t^n\}t^{-n}(2^kt)^{2-m}\log t \, \frac{1}{\mu(B(z_k,t))\log t}\int_{B(z_k,t)} |f(y)-A_tf(y)| {\,} d\mu(y)\\
&\leq C \sum_{k_0\leq k<0}(2^{2k}t^{2-n}\log t+(2^kt)^{2-m}\log t) \|f\|_{\text{BMO}_L^\rho(M)}\\
&\leq C \Big(\sum_{k_0\leq k<0} 2^{2k}+\sum_{(2^kt)\geq 1} (2^kt)^{2-n} (2^{k_0}t)^{n-m}\log t\Big)\|f\|_{\text{BMO}_L^\rho(M)}\\
&\leq C \Big(1+\sum_{(2^kt)\geq 1} (2^kt)^{2-n} t^{\frac{n-\rho}{m-\rho}(n-m)}\log t\Big)\|f\|_{\text{BMO}_L^\rho(M)}\\
&\leq C \|f\|_{\text{BMO}_L^\rho(M)},
\end{align*}
where the third inequality follows from \eqref{BMO-def2} since each $B(z_k,t)$ is in $\mathcal B_0^\rho$,
the fourth inequality follows from the fact that $t>1$ and the last inequality follows from the facts that $n<m$ and that $t>1$.

For $II_{23}$, we can choose a ball $B(z_0,t)\subset M$ with radius $t$ such that
 $z_0\in \Rf^n$, $$\{y\in\R^m\backslash K:\ |y|<2^{k_0}t\}\subset B(z_0,t)$$ and that $\mu(B(z_0,t))\sim t^n$. Note that, this time, $B(z_0,t)\notin \mathcal{B}_0$ and thus there is no $\log$ term in the definition of $\text{BMO}_L^\rho(M)$.
\begin{align*}
II_{23}%&\leq \sum_{k\geq 1}\int_{B(x,s,k)} \frac{C}{s^{m\over2}}\exp(-c_0\frac{d(x,y)^2}{s})|f(y)-A_sf(y)| {\,} d\mu(y)\\
&\leq \int_{\{y\in\R^m\backslash K:\ |y|\leq 2^{k_0}t\}} \frac{C}{t^n|y|^{m-2}}|f(y)-A_tf(y)| {\,} d\mu(y)\\
&\leq C  \frac{1}{\mu(B(z_0,t))}\int_{B(z_0,t)} |f(y)-A_tf(y)| {\,} d\mu(y)\\
&\leq C \|f\|_{\text{BMO}_L^\rho(M)}.
\end{align*}

Combining all the estimates above, we see that \eqref{key step 4.1 pre} holds, and hence
we finish the proof of \eqref{BMO-def20}.
The proof of  Theorem \ref{main0} is complete.
\end{proof}

\section{John--Nirenberg inequality, proof of Theorem \ref{main1}}
\setcounter{equation}{0}

In this section, we will prove Theorem \ref{main1}, the John--Nirenberg inequality for $\bmo$.

\begin{proof}[Proof of Theorem \ref{main1}]
We point out that the inequalities \eqref{eq44.1} and \eqref{eq44.11} are scale invariant with respect to $f$, i.e., \eqref{eq44.1} and \eqref{eq44.11} do not change when we replace $f$ by $Cf$ where $C$ is an arbitrary constant. Thus, it is enough to prove that there exist two positive constants $c_1$ and $c_2$ such that  for  $f\in \bmo$ with $\|f\|_{\bmo}=1$ and for every ball $B\subset \mathcal B_1$,
\begin{eqnarray}\label{eq44.2}
\mu\big(\{x\in B:|f(x)-A_{r_B}f(x)|>\alpha\}\big)\leq c_1e^{-c_2\alpha}\mu(B).
\end{eqnarray}
Similarly, to prove \eqref{eq44.11}, it suffices to show that there exist two positive constants $c_1$ and $c_2$ such that  for  $f\in \bmo$ with $\|f\|_{\bmo}=1$
and for every ball $B\subset \mathcal B_0$
\begin{eqnarray}\label{eq44.223}
\mu\big(\{x\in B:|f(x)-A_{r_B}f(x)|>\alpha\}\big)\leq c_1e^{-c_2\alpha/\log r_B}\mu(B).
\end{eqnarray}
It is obvious that in the case $\alpha\leq 2$, the above inequality \eqref{eq44.2} and \eqref{eq44.223} are true for $c_1=e^2$ and $c_2=1$. Hence, it suffices to prove \eqref{eq44.2} and \eqref{eq44.223} for  $\alpha>2$.

For any fixed ball $B\subset M$, denote by $x_B$ and $r_B$
the center and the radius of the ball $B$.

\medskip
{\bf Case I:}\ $|x_B|<2r_B$ and $r_B>1$.

%Next we consider the case $|x_B|<2r_B$ and $r_B>1$.

We assume that the ball $B(x_B,r_B)$ is divided  to two parts:
$$
B\cap \R^m  \ \mbox{and}\ B\cap \Rf^n,
$$
%where $K$ in each side are contained in a square box, such as $[-2,2]\times [-2,2]\times \ldots \times [-2,2]$.
%For $\R^m\backslash K$ and $\Rf^n\backslash K$ we use the classical dyadic cubes systems. For $K$,
%because each ball in $K$ is doubling, we use the idea of M. Christ \cite{Chr} to obtain a dyadic cubes system on $K$. Put these three dyadic systems together to form a dyadic decomposition of $M$.
and some of the parts will be empty set if $B$ does not have intersection with the corresponding ends.

%If $x_B$ is in $K$ and $(B\backslash K)\cap \R^m$ is non-empty, then according to the definition of the metric on $M$, we see that
%the part $(B\backslash K)\cap \R^m$ is a regular annuli on $\R^m$. We use $r_{B^m}$ to denote the radius for the annuli of  $(B\backslash K)\cap \R^m$, and similarly, when $(B\backslash K)\cap \Rf^n$ is non-empty then we use $r_{B^n}$ to denote the radius for the annuli of  $(B\backslash K)\cap \Rf^n$. Then, for the annuli $(B\backslash K)\cap \R^m$, there exists a set of dyadic cubes $\{Q_i\}_{i=1}^{ 2^m}$ on $\R^m$ with side-length equivalent to $r_{B^m}$ that covers $(B\backslash K)\cap \R^m$. Again, for the annuli $(B\backslash K)\cap \Rf^n$, there exists a set of dyadic cubes $\{Q'_j\}_{j=1}^{2^n}$ on $\Rf^n$ with side-length equivalent to $r_{B^n}$ that covers $(B\backslash K)\cap \Rf^n$.
Recall that
$$
\mathcal{I}_1=\{\mbox{dyadic cubes $Q\subset \R^m$ or $Q\subset \Rf^n$ such that $r_Q\geq 2$ and dist$(Q,K)\leq r_Q$}\}
$$
and
$$
\mathcal{I}_2=\{\mbox{dyadic cubes $Q\subset \R^m$ or $Q\subset \Rf^n$ such that $r_Q<2$ or dist$(Q,K)> r_Q$}\}.
$$

If $x_B$ is in $\R^m$, then there exists a set of dyadic cubes $\{Q_i\}_{i=1}^{ 2^{m}}$ on $\R^m$ with side-length equivalent to $r_{B}$ that covers $B\cap \R^m$. Because $x_B\leq 2r_B$, we can choose all $Q_i\in \mathcal I_1$. In this case, if $B\cap \Rf^n$ is non-empty then there exists a set of dyadic cubes $\{Q'_j\}_{j=1}^{2^n}$ on $\Rf^n$ with side-length equivalent to $r_{B^n}$ that covers $(B\backslash K)\cap \Rf^n$ where we use $r_{B^n}$ to denote the radius  of  $B\cap \Rf^n$. Similarly, all $Q'_j\in \mathcal I_1$.

Similarly, If $x_B$ is in $\Rf^n$, then there exists a set of dyadic cubes $\{Q'_j\}_{j=1}^{ 2^{n}}$ on $\Rf^n$ with side-length equivalent to $r_{B}$ that covers $B\cap \Rf^n$. In this case, if $B\cap \R^m$ is non-empty then there exists a set of dyadic cubes $\{Q_i\}_{i=1}^{2^m}$ on $\R^m$ with side-length equivalent to $r_{B^m}$ that covers $B\cap \R^m$ where we use $r_{B^m}$ to denote the radius  of  $B\cap \R^m$.

As a consequence, we see that in any case we can find two sets of smallest dyadic cubes $\{Q_i\}\in \R^m$ and $\{Q'_j\}\in \Rf^n$ such that
\begin{align}\label{covering of B}
B\subset \big(\cup Q_i\big)\bigcup \big(\cup Q'_j\big).
\end{align}

Now to continue, it suffices to consider the estimate in one of the cubes in $\big\{ Q_i\big\} $ as well as one of the cubes in $\big\{\cup Q'_j\big\}$
since the side length of each $Q_i$ (each $Q'_j$) is the same.

To begin with, we pick one cube $Q$ from  $\big\{ Q_i\big\} $.
Set $Q=Q_0$ which is a dyadic cube such that one of the corner of $Q_0$ is the origin.
%We set all dyadic cubes in $Q_0$ to two class:
%$$
%\mathcal{I}_1=\{\mbox{dyadic cubes in $Q_0$ such that one of the corner is the origin}\}
%$$
%and
%$$
%\mathcal{I}_2=\{\mbox{dyadic cubes in $Q_0$ such that none of the corner is the origin}\}.
%$$

We shall prove that
\begin{eqnarray}\label{eq444.2}
\mu\big(\{x\in Q:|f(x)-A_{Q}f(x)|>\alpha\}\big)\leq c_1e^{-c_2\alpha}\mu(Q).
\end{eqnarray}
From here and to the end of the proof, we will use
$A_{Q}f$ to denote $A_{t_Q}$, where $t_Q =\ell(Q)$ and $\ell(Q)$ is the sidelength of $Q$.

Fix a constant $\beta>1$ to be chosen later. We apply Calder\'on--Zygmund decomposition to the function $f-A_{Q_0}f$ inside the cube $Q$.
We introduce the following selection criterion for a cube $R$:
\begin{eqnarray}\label{condition1}
\frac{1}{\mu(2R)}\int_{2R}|f(y)-A_Qf(y)| {\,} d\mu(y)>\beta,
\end{eqnarray}
where $2R$ denotes the ball with the same center as the center of cube $R$ and with radius of $2$ times of sidelength of $R$.

Set $Q_0=Q$ and subdivide $Q_0$ into its next level dyadic cubes. Denote $Q_1$ the only child of $Q_0$ such that $Q_1\in \mathcal{I}_1$. For any other child of $Q_0$, we denote it by $R_1$ and it is clear that it belongs to $\mathcal{I}_2$.
It follows from the definition of $\bmo$,
$$
\frac{1}{\mu(2R_1)}\int_{2R_1}|f(y)-A_Qf(y)| {\,} d\mu(y)= \frac{1}{\mu(2R_1)}\int_{2R_1}|f(y)-A_{2R_1}f(y)| {\,} d\mu(y)\leq \|f\|_{\bmo}=1<\beta,
$$
which means that the cube $R_1$ does not satisfy the selection criterion~\eqref{condition1}.
 %Select $Q_1$ no matter it satisfies~\eqref{condition1} or not.
 Now divide all $R_1\in \mathcal{I}_2$. Select such a subcube $R$ if it satisfies the selection criterion~\eqref{condition1}.    Now subdivide all non-selected cubes into the next level dyadic cubes and select among these subcubes those that satisfy \eqref{condition1}. Continue this process indefinitely. We obtain a countable collection of cubes $\{Q_{j^{(1)'}}^{(1)'}\}_{j^{(1)'}}$. Select $Q_1$ no matter it satisfies~\eqref{condition1} or not. Then we obtain a collection of cubes $\{Q_j^{(1)}\}_j=\{Q_{j^{(1)'}}^{(1)'}\}_{j^{(1)'}}\cup Q_1$. Among all these selected cubes $\{Q_{j^{(1)'}}^{(1)}\}_{j^{(1)'}}$,
 $Q_1$ is the only one cube that belongs to $\mathcal{I}_1$. For this countable collection $\{Q_{j^{(1)}}^{(1)}\}_{j^{(1)}}$, we claim that:

\smallskip
(A-1) \ \ \ except $Q_1$, that is , for all $Q_{j^{(1)'}}^{(1)'}$, we have $\beta<\frac{1}{\mu(2Q_{j^{(1)'}}^{(1)'})}
\int_{2Q_{j^{(1)'}}^{(1)'}}|f(y)-A_Qf(y)| {\,} d\mu(y)\leq 2^m \beta$;

(B-1)  \ \ \  $|A_{Q_{j^{(1)}}^{(1)}}f(x)-A_{Q_0}f(x)|\leq C2^m\beta$\ for all $Q_{j^{(1)}}^{(1)}$ and $x\in Q_{j^{(1)}}^{(1)}$,
where the constant $C$ depends only on the dimensions $m$, $n$ and on the constant appeared in the upper bound of the Poisson
kernel as in Proposition~\ref{prop2.1};

(C-1) \ \ \   $\mu(Q_1)\leq \frac{1}{2^m}\mu(Q)$; $\sum_{j^{(1)}} \mu(Q_{j^{(1)'}}^{(1)'})\leq \frac{2^m-1}{2^m}\mu(Q)$;

(D-1)  \ \ \  $|f(x)-A_{Q_0}f(x)|\leq \beta$ for $x$ on the set $Q_0\backslash\cup_j Q_{j^{(1)}}^{(1)}$.

\smallskip
Proof of (A-1): The criterion criterion~\eqref{condition1} gives that $\beta<\frac{1}{\mu(2Q_{j^{(1)'}}^{(1)'})}\int_{2Q_{j^{(1)'}}^{(1)'}}|f(y)-A_Qf(y)| {\,} d\mu(y)$. To show the upper bound for this integration,
we denote by $\widetilde{Q_{j^{(1)'}}^{(1)'}}$  the father of $Q_{j^{(1)'}}^{(1)'}$. From our selection of cubes, we know that
$$
\frac{1}{\mu(2Q_{j^{(1)'}}^{(1)'})}\int_{2Q_{j^{(1)'}}^{(1)'}}|f(y)-A_Qf(y)| {\,} d\mu(y)\leq \frac{\mu(2\widetilde{Q_{j^{(1)'}}^{(1)'}})}{\mu(2Q_{j^{(1)'}}^{(1)'})}\frac{1}{\mu(2\widetilde{Q_{j^{(1)'}}^{(1)'}})}\int_{2\widetilde{Q_{j^{(1)'}}^{(1)'}}}|f(y)-A_Qf(y)| {\,} d\mu(y)
<2^m\beta.
$$
%We can check that for all $x,y$
%$$
%p_{t_{Q_j^{(1)}}}(x,y)\leq 2^m p_{t_{\widetilde{Q_j^{(1)}}}}(x,y).
%$$
%This implies
%$$
%A_{Q_j^{(1)}}(f-A_Qf)(y)\leq 2^m A_{\widetilde{Q_j^{(1)}}}(f-A_Qf)(y)
%$$
%and
%$$
%\sup_{y\in Q_j^{(1)}}A_{Q_j^{(1)}}(f-A_Qf)(y)\leq 2^m \sup_{y\in \widetilde{Q_j^{(1)}}}A_{\widetilde{Q_j^{(1)}}}(f-A_Qf)(y)\leq 2^m \beta.
%$$

Proof of (B-1): For $Q_1$, by~\eqref{BMO-def20},
\begin{eqnarray*}
|A_{Q_1}f(x)-A_{Q_0}f(x)|\leq C\|f\|_{\bmo}\leq C2^m\beta.
\end{eqnarray*}
For every $Q_{j^{(1)}}^{(1)}$ which is not $Q_1$, by~\eqref{BMO-def20},
\begin{align*}
|A_{Q_{j^{(1)}}^{(1)}}f(x)-A_{Q_0}f(x)|&\leq |A_{Q_{j^{(1)}}^{(1)}}(f-A_{Q_0}f)(x)|+|A_{Q_{j^{(1)}}^{(1)}+Q_0}f(x)-A_{Q_0}f(x)|\\
&\leq |A_{Q_{j^{(1)}}^{(1)}}(f-A_{Q_0}f)(x)|+C\|f\|_{\bmo}.
\end{align*}
Then we estimate $|A_{Q_{j^{(1)}}^{(1)}}(f-A_{Q_0}f)(x)|$ with $x\in Q_{j^{(1)}}^{(1)}$. Denote by $\widetilde {Q_{j^{(1)}}^{(1)}}^{k}$  the $k$th ancestor of $Q_{j^{(1)}}^{(1)}$ and let $K_0$ be the number such that $\widetilde {Q_{j^{(1)}}^{(1)}}^{K_0}$ is one of the children of $Q_0$.
We write
\begin{align*}
&|A_{Q_{j^{(1)}}^{(1)}}(f-A_{Q_0}f)(x)|\\
&\leq \int_{2Q_{j^{(1)}}^{(1)}} p_{Q_{j^{(1)}}^{(1)}}(x,y)|f(y)-A_{Q_0}f(y)| {\,} d\mu(y)\\
&\quad+\int_{2\widetilde {Q_{j^{(1)}}^{(1)}}^{K_0}\backslash 2Q_{j^{(1)}}^{(1)}} p_{Q_j^{(1)}}(x,y)|f(y)-A_{Q_0}f(y)| {\,} d\mu(y)+\int_{M\backslash 2{\widetilde {Q_{j^{(1)}}^{(1)}}^{K_0}}} p_{Q_{j^{(1)}}^{(1)}}(x,y)|f(y)-A_{Q_0}f(y)| {\,} d\mu(y)\\
&=:B_{11}+B_{12}+B_{13}.
\end{align*}
We first note that $x\in Q_{j^{(1)}}^{(1)}$ implies that $|x|\geq\ell(Q_{j^{(1)}}^{(1)})$.

For $x,y\in \R^m$, from Proposition~\ref{prop2.1} we have the upper bound for the Poisson kernel as follows.
$$
p_{t}(x,y)\leq \frac{C}{t^m}(1+\frac{d(x,y)}{t})^{-m-1}+\frac{C}{t^n|x|^{m-2}|y|^{m-2}}(1+\frac{d(x,y)}{t})^{-n-1}.
$$
For the term $B_{11}$,  from the pointwise upper bound of the Poisson kernel $p_{Q_{j^{(1)}}^{(1)}}(x,y)$, the fact  $|x|>\ell(Q_{j^{(1)}}^{(1)})$ and from (A-1), we have
\begin{align*}
B_{11}&\leq \frac{1}{\ell(Q_{j^{(1)}}^{(1)})^m} \int_{2Q_{j^{(1)}}^{(1)}} |f(y)-A_{Q_0}f(y)| {\,} d\mu(y)\\
&\leq \frac{2^m}{\mu(2Q_{j^{(1)}}^{(1)})} \int_{2Q_{j^{(1)}}^{(1)}} |f(y)-A_{Q_0}f(y)| {\,} d\mu(y)\\
&\leq2^{2m} \beta.
\end{align*}
For the term $B_{12}$,  consider the chain of the dyadic cubes $\Big\{\widetilde {Q_{j^{(1)}}^{(1)}}^{k} \Big\}_{k=1}^{K_0}$ subject to the partial order via inclusion ``$\subset$'',  with the initial
dyadic cube $\widetilde {Q_{j^{(1)}}^{(1)}}^{1} $ which is the father of  $Q_{j^{(1)}}^{(1)}$  and the terminal one $\widetilde {Q_{j^{(1)}}^{(1)}}^{K_0} $ which is a child of $Q_0$ but different from $Q_1$, i.e., none of the corners of $\widetilde {Q_{j^{(1)}}^{(1)}}^{K_0} $ is the origin, which further gives that
$2\widetilde {Q_{j^{(1)}}^{(1)}}^{K_0}$ can not hit $K$. As a consequence, for any $x\in Q_{j^{(1)}}^{(1)} \subset \widetilde {Q_{j^{(1)}}^{(1)}}^{K_0}$ and for any $y\in \widetilde {Q_{j^{(1)}}^{(1)}}^{K_0} $, we have that
$$
|x|>\ell\Big(\widetilde {Q_{j^{(1)}}^{(1)}}^{K_0}\Big) \geq d(x,y).
$$

Then from the pointwise upper bound of the Poisson kernel $p_{Q_{j^{(1)}}^{(1)}}(x,y)$ we get
\begin{align*}
B_{12}&\leq \sum_{k=1}^{K_0}\frac{1}{\ell(Q_{j^{(1)}}^{(1)})^m}
2^{-k(m+1)}\int_{2\widetilde {Q_{j^{(1)}}^{(1)}}^{k}\backslash 2\widetilde {Q_{j^{(1)}}^{(1)}}^{k-1}} |f(y)-A_{Q_0}f(y)| {\,} d\mu(y)\\
&\quad+\sum_{k\geq 0, 2^k\ell(Q_{j^{(1)}}^{(1)})\leq 1}\frac{1}{\ell(Q_{j^{(1)}}^{(1)})^n }
2^{-k(n+1)}\int_{2\widetilde {Q_{j^{(1)}}^{(1)}}^{k}\backslash 2\widetilde {Q_{j^{(1)}}^{(1)}}^{k-1}}{1\over |x|^{m-2}} |f(y)-A_{Q_0}f(y)| {\,} d\mu(y)\\
&\quad+\sum_{k\geq 0, 2^k\ell(Q_{j^{(1)}}^{(1)})> 1}\frac{1}{\ell(Q_{j^{(1)}}^{(1)})^n }
2^{-k(n+1)}\int_{2\widetilde {Q_{j^{(1)}}^{(1)}}^{k}\backslash 2\widetilde {Q_{j^{(1)}}^{(1)}}^{k-1}}{1\over d(x,y)^{m-2}} |f(y)-A_{Q_0}f(y)| {\,} d\mu(y)\\
&\leq \sum_{k=1}^{K_0}\frac{\mu(2\widetilde {Q_{j^{(1)}}^{(1)}}^{k})}{\ell(Q_{j^{(1)}}^{(1)})^m}
2^{-k(m+1)}\frac{1}{\mu(2\widetilde {Q_{j^{(1)}}^{(1)}}^{k})}\int_{2\widetilde {Q_{j^{(1)}}^{(1)}}^{k}} |f(y)-A_{Q_0}f(y)| {\,} d\mu(y)\\
&\quad+\sum_{k\geq 0, 2^k\ell(Q_{j^{(1)}}^{(1)})\leq 1}\frac{\mu(2\widetilde {Q_{j^{(1)}}^{(1)}}^{k})}{\ell(Q_{j^{(1)}}^{(1)})^n }
2^{-k(n+1)}\frac{1}{\mu(2\widetilde {Q_{j^{(1)}}^{(1)}}^{k})}\int_{2\widetilde {Q_{j^{(1)}}^{(1)}}^{k}} |f(y)-A_{Q_0}f(y)| {\,} d\mu(y)\\
&\quad+\sum_{k\geq 0, 2^k\ell(Q_{j^{(1)}}^{(1)})>1}\frac{\mu(2\widetilde {Q_{j^{(1)}}^{(1)}}^{k})}{\ell(Q_{j^{(1)}}^{(1)})^n (2^k \ell(Q_{j^{(1)}}^{(1)}))^{m-2}}
2^{-k(n+1)}\frac{1}{\mu(2\widetilde {Q_{j^{(1)}}^{(1)}}^{k})}\int_{2\widetilde {Q_{j^{(1)}}^{(1)}}^{k}} |f(y)-A_{Q_0}f(y)| {\,} d\mu(y),
\end{align*}
where in the second inequality we use the fact that $d(x,y)\sim 2^k\ell(Q_{j^{(1)}}^{(1)})$ for any $x\in Q_{j^{(1)}}^{(1)} $ and for $y\in 2\widetilde {Q_{j^{(1)}}^{(1)}}^{k}\backslash 2\widetilde {Q_{j^{(1)}}^{(1)}}^{k-1}$ for the third term and the fact $|x|\geq1$ for the second term. Then from (A-1), we further have
\begin{align*}
B_{12}&\leq  2^m\beta\sum_{k=1}^{K_0}\frac{  \big(2^k\ell(Q_{j^{(1)}}^{(1)})\big)^m}{\ell(Q_{j^{(1)}}^{(1)})^m}
2^{-k(m+1)}  + 2^m\beta\sum_{k\geq 0, 2^k\ell(Q_{j^{(1)}}^{(1)})\leq 1}(2^k\ell(Q_{j^{(1)}}^{(1)}))^{m-n}
2^{-k}\\
&\quad +2^m\beta\sum_{k\geq 0, 2^k \ell(Q_{j^{(1)}}^{(1)})>1}(2^k\ell(Q_{j^{(1)}}^{(1)}))^{2-n}
2^{-k}\\
&\leq C\beta.
\end{align*}
%{\color{red} Here we should use the fact similarly that for every ball with radius bigger than $4 r_{Q_j^{(1)}}$, the average on them should be controlled by $\beta$!}
We now consider the term $B_{13}$.
Note that in this case $d(x,y)\geq 2^{K_0}\ell(Q_{j^{(1)}}^{(1)})=\ell(Q_0)$
for any $x\in Q_{j^{(1)}}^{(1)} $ and for $y\in M\backslash 2\widetilde {Q_{j^{(1)}}^{(1)}}^{K_0}$.
Then we obtain from Proposition~\ref{prop2.1} that for any $x\in Q_{j^{(1)}}^{(1)} $:

1) for $y\in (\R^m\backslash K)\backslash 2\widetilde {Q_{j^{(1)}}^{(1)}}^{K_0}$,
\begin{align*}
p_{Q_{j^{(1)}}^{(1)}}(x,y)&\leq \frac{C}{\ell(Q_{j^{(1)}}^{(1)})^m}\Bigg(1+\frac{d(x,y)}{\ell(Q_{j^{(1)}}^{(1)})}\Bigg)^{-m-1}+\frac{C}{\ell(Q_{j^{(1)}}^{(1)})^n|x|^{m-2}|y|^{m-2}}\Bigg(1+\frac{d(x,y)}{\ell(Q_{j^{(1)}}^{(1)})}\Bigg)^{-n-1}\\
&\leq \frac{C}{\ell(Q_0)^m}\Bigg(1+\frac{d(x,y)}{\ell(Q_0)}\Bigg)^{-m-1}+\frac{C}{\ell(Q_0)^n|x|^{m-2}|y|^{m-2}}\Bigg(1+\frac{d(x,y)}{\ell(Q_0)}\Bigg)^{-n-1};
\end{align*}

2) for $y\in \Rf^n\backslash K$ %\backslash 2\widetilde {Q_j^{(1)}}^{K_0}$,
\begin{align*}
p_{Q_{j^{(1)}}^{(1)}}(x,y)&\leq \frac{C}{\ell(Q_{j^{(1)}}^{(1)})^m}\Bigg(1+\frac{d(x,y)}{\ell(Q_{j^{(1)}}^{(1)})}\Bigg)^{-m-1}+\frac{C}{\ell(Q_{j^{(1)}}^{(1)})^n|x|^{m-2}}\Bigg(1+\frac{d(x,y)}{\ell(Q_{j^{(1)}}^{(1)})}\Bigg)^{-n-1}\\
&\quad+\frac{C}{\ell(Q_{j^{(1)}}^{(1)})^m|y|^{n-2}}\Bigg(1+\frac{d(x,y)}{\ell(Q_{j^{(1)}}^{(1)})}\Bigg)^{-m-1}\\
&\leq \frac{C}{\ell(Q_0)^m}\Bigg(1+\frac{d(x,y)}{\ell(Q_0)}\Bigg)^{-m-1}+\frac{C}{\ell(Q_0)^n|x|^{m-2}}\Bigg(1+\frac{d(x,y)}{\ell(Q_0)}\Bigg)^{-n-1}\\
&\quad+\frac{C}{\ell(Q_0)^m|y|^{n-2}}\Bigg(1+\frac{d(x,y)}{\ell(Q_0)}\Bigg)^{-m-1};
\end{align*}

3) for $y\in K$
\begin{align*}
p_{Q_{j^{(1)}}^{(1)}}(x,y)&\leq \frac{C}{\ell(Q_{j^{(1)}}^{(1)})^m}\Bigg(1+\frac{d(x,y)}{\ell(Q_{j^{(1)}}^{(1)})}\Bigg)^{-m-1}+\frac{C}{\ell(Q_{j^{(1)}}^{(1)})^n|x|^{m-2}}\Bigg(1+\frac{d(x,y)}{\ell(Q_{j^{(1)}}^{(1)})}\Bigg)^{-n-1}\\
&\leq \frac{C}{\ell(Q_0)^m}\Bigg(1+\frac{d(x,y)}{\ell(Q_0)}\Bigg)^{-m-1}+\frac{C}{\ell(Q_0)^n|x|^{m-2}}\Bigg(1+\frac{d(x,y)}{\ell(Q_0)}\Bigg)^{-n-1}.
\end{align*}

From the above estimates 1), 2) and 3) we see that for any $x\in Q_{j^{(1)}}^{(1)} $ and for any $y\in M\backslash 2\widetilde {Q_{j^{(1)}}^{(1)}}^{K_0}$,
 the upper bound of
$ p_{Q_{j^{(1)}}^{(1)}}(x,y)$ is controlled by the upper bound of
$ p_{Q_0}(x,y)$ pointwise. We use $\overline{p_{Q_0}(x,y)}$ to denote the
upper bound of  $ p_{Q_0}(x,y)$, which is as in the right-hand side of 1), 2) and 3) in each case.

Then, to estimate the term $B_{13}$, it suffices to estimate the term
\begin{align}\label{B13 upper bound}
 \int_{M}  \overline{p_{Q_0}(x,y)}|f(y)-A_{Q_0}f(y)| {\,} d\mu(y).
\end{align}
In fact, we point out that in the second half of the proof of Theorem \ref{main0}, we have already obtain such estimates. To be more specific, following the same estimates for the right-hand side of \eqref{AtAs integral} and the result in Theorem \ref{main0} about the equivalent norms of the two versions of BMO spaces, we obtain that
the integral in \eqref{B13 upper bound} is bounded by $C\|f\|_{\bmo}$, which gives that
$B_{13}\leq C \|f\|_{\bmo}$.

Combining the estimates of the terms $B_{11}$, $B_{12}$ and $B_{13}$, we obtain that
(B-1) holds.

%({\color{red} Here is simple. Because the heat kernel is real Guassian bound and all balls volume $\mu(B)\leq r^m$ no mater where is the ball and how large is the ball. But when we go to the case $Q\in \mathfrak{R}^n$, then the argement will be complicate because annul decomposition allows ball located in $\mathfrak{R}^m$. There we should use the trick when we prove Lemma~\ref{le4.1}, that is, close to $K$, put the ball to small end, far away from $K$, the heat kernel will be better enough.})

Proof of (C-1): This is obvious.

Proof of (D-1): for $x\in Q_0\backslash \cup_{j^{(1)}} Q_{j^{(1)}}^{(1)}$, we have that
$$
|f(x)-A_{Q_0}f(x)|\leq {\varlimsup_{|R|\to 0,x\in 2R}}\frac{1}{\mu(2R)}\int_{2R}|f(y)-A_{Q_0}f(y)| {\,} d\mu(y)\leq \beta.
$$

For the next step, we divide it to two cases:

We now fix a selected first-generation cube $Q_{j^{(1)}}^{(1)}=Q_1$. Recall that $Q_1$ is the only one cube in the first generation that belongs to $\mathcal{I}_1$, i.e., one corner of $Q_1$ is origin.  In this case, just repeat the above argument, we get a sequence of
dyadic cubes $\{Q_{j^{(2)'}}^{(2)'}\}_{j^{(2)'}}\cup Q_2$, where  $Q_2$ is the only one child  of $Q_1$ such that one corner of $Q_2$ is origin. That is,
$Q_2$ belongs to $\mathcal{I}_1$. And we have $\{Q_{j^{(2)'}}^{(2)'}\}_{j^{(2)'}} \subset \mathcal I_2$.
Then we have the following results:

(A-2)  \ \ \  for all $Q_{j^{(2)'}}^{(2)'}$, we have $\beta<\frac{1}{\mu(2Q_{j^{(2)'}}^{(2)'})}
\int_{2Q_{j^{(2)'}}^{(2)'}}|f-A_{Q_1}f|(y)d\mu(y)\leq 2^m \beta$;

(B-2) \ \ \  $|A_{Q_{j^{(2)}}^{(2)}}f(x)-A_{Q_1}f(x)|\leq C2^m\beta$ for all $x\in Q_{j^{(2)}}^{(2)}\subset \{Q_{j^{(2)'}}^{(2)'}\}_{j^{(2)'}}\cup Q_2$, where the constant $C$ depends only on the dimensions $m$, $n$ and on the constant appeared in the upper bound of the Poisson kernel as in Proposition~\ref{prop2.1};

(C-2) \ \ \  $\mu(Q_2)\leq \frac{1}{2^m}\mu(Q_1)$;\quad $\sum_{j^{(2)'}} \mu(Q_{j^{(2)'}}^{(2)'})\leq \frac{2^m-1}{2^m}\mu(Q_1)$;

(D-2) \ \ \  $|f(x)-A_{Q_1}f(x)|\leq \beta$ for $x$ on the set $Q_1\backslash(\cup_{j^{(2)'}} Q_{j^{(2)'}}^{(2)'}\cup Q_2)$.

We now fix a selected first-generation cube $Q_{j^{(1)'}}^{(1)'}$, which is not $Q_1$. This means that $Q_{j^{(1)'}}^{(1)'}\in \mathcal{I}_2$
and from the construction of dyadic cubes, we have $\mbox{dist} \,(K, Q_{j^{(1)'}}^{(1)'})\sim \ell(Q_{j^{(1)'}}^{(1)'})$. Then
define $f_0=(f-A_{Q_{j^{(1)'}}^{(1)'}}f)\chi_{2Q_{j^{(1)'}}^{(1)'}}$.
We apply the Calder\'on--Zygmund decomposition to the function $f_0$ inside the cube $Q_{j^{(1)'}}^{(1)'}$.
We introduce the following selection criterion for a cube $R$:
\begin{eqnarray}\label{condition2}
\frac{1}{\mu(2R)}\int_{2R}|f_0(y)| {\,} d\mu(y)>\beta.
\end{eqnarray}

It follows from~\eqref{BMO-def10} and \eqref{BMO-def20} in the definition of $\bmo$,  that
\begin{align*}
&\frac{1}{\mu(2Q_{j^{(1)'}}^{(1)'})}\int_{2Q_{j^{(1)'}}^{(1)'}}|f(y)-A_{Q_{j^{(1)'}}^{(1)'}}f(y)| {\,} d\mu(y)\\
&\leq \frac{1}{\mu(2Q_{j^{(1)'}}^{(1)'})}\int_{2Q_{j^{(1)'}}^{(1)'}}|f(y)-A_{2Q_{j^{(1)'}}^{(1)'}}f(y)| {\,} d\mu(y)+ \sup_{y\in 2Q_{j^{(1)'}}^{(1)'}} |A_{2Q_{j^{(1)'}}^{(1)'}}f(y)-A_{Q_{j^{(1)'}}^{(1)'}}f(y)|\\
&\leq (1+C)\|f\|_{\bmo}\\
&<\beta.
\end{align*}
Thus, the cube $Q_{j^{(1)'}}^{(1)'}$ does not satisfy the selection criterion~\eqref{condition2}. Subdivide $Q_{j^{(1)'}}^{(1)'}$ into dyadic cubes in the next level. Select such a subcube $R$ if it satisfies the selection criterion~\eqref{condition2}. Now subdivide all non-selected cubes into their next level dyadic cubes and select among these subcubes those that satisfy \eqref{condition2}. Continue this process indefinitely. We obtain a countable collection of cubes $\{Q_{j^{(2)''}}^{(2)''}\}_{j^{(2)''}}$. Among these $\{Q_{j^{(2)''}}^{(2)''}\}_{j^{(2)''}}$, none of them are in
$ \mathcal{I}_1$. Then for $Q_{j^{(2)''}}^{(2)''}$ we have

\smallskip

(A-2') \ \ \  $\beta<\frac{1}{\mu(2Q_{j^{(2)''}}^{(2)''})}\int_{2Q_{j^{(2)''}}^{(2)''}}|f-A_{Q_{j^{(1)'}}^{(1)'}}f|(y)d\mu(y)\leq 2^m \beta$;

(B-2')  \ \ \  $|A_{Q_{j^{(2)''}}^{(2)''}}f(x)-A_{Q_{j^{(1)'}}^{(1)'}}f(x)|\leq C2^m\beta$\quad for all $x\in Q_j^{(2)''}$,
where the constant $C$ depends only on the dimensions $m$, $n$ and on the constant appeared in the upper bound of the Poisson
kernel as in Proposition~\ref{prop2.1};

(C-2')  \ \ \  $\sum_{j^{(2)''}} \mu(Q_{j^{(2)''}}^{(2)''})\leq C\mu(Q_{j^{(1)'}}^{(1)'})/\beta$;

(D-2') \ \ \  $|f(x)-A_{Q_{j^{(1)'}}^{(1)'}}f(x)|\leq \beta$ for $x$ on the set $Q_{Q_{j^{(1)'}}^{(1)'}}\backslash\cup_{j^{(2)''}} Q_{j^{(2)''}}^{(2)''}$.

Proof of (A-2'): From criterion~\eqref{condition2} , it is clear that $\beta<\frac{1}{\mu(2Q_{j^{(2)''}}^{(2)''})}
\int_{2Q_{j^{(2)''}}^{(2)''}}|f(y)-A_{Q_{j^{(1)'}}^{(1)'}}f(y)| {\,} d\mu(y)$.
By the similar argument in the proof of (A-1), we have
$\frac{1}{\mu(2Q_{j^{(2)''}}^{(2)''})}\int_{2Q_{j^{(2)''}}^{(2)''}}|f(y)-A_{Q_{j^{(1)'}}^{(1)'}}f(y)| {\,} d\mu(y)\leq 2^m \beta$.
%Let $\widetilde{Q_j^{(2)}}$ be the father of $Q_j^{(2)}$. We can check that for all $x,y$
%$$
%p_{t_{Q_j^{(2)}}}(x,y)\leq 2^m p_{t_{\widetilde{Q_j^{(2)}}}}(x,y).
%$$
%This implies
%$$
%A_{Q_j^{(2)}}(f_0)(y)\leq 2^m A_{\widetilde{Q_j^{(2)}}}(f_0)(y)
%$$
%and
%$$
%\sup_{y\in Q_j^{(2)}}A_{Q_j^{(2)}}(f_0)(y)\leq 2^m \sup_{y\in \widetilde{Q_j^{(2)}}}A_{\widetilde{Q_j^{(2)}}}(f_0)(y)\leq 2^m \beta.
%$$

Proof of (B-2'): The proof is similar to that of (B-1).

Proof of (C-2'): Note that for $x\in Q_{j^{(2)''}}^{(2)''}$,
$$
\beta<\frac{1}{\mu(2Q_{j^{(2)''}}^{(2)''})}\int_{2Q_{j^{(2)''}}^{(2)''}}|f(y)-A_{Q_{j^{(1)'}}^{(1)'}}f(y)| {\,} d\mu(y)\leq \mathcal M f_0(x)
$$
which together with the weak type $(1,1)$ of the Hardy--Littlewood maximal operator $\mathcal M$ and $2Q_{j^{(1)'}}^{(1)'}\subset \R^m$ implies
\begin{eqnarray*}
\sum_{j^{(2)''}} \mu(Q_{j^{(2)''}}^{(2)''})\leq \mu\big(\{x\in Q_{j^{(1)'}}^{(1)'}: \mathcal M f_0(x)>\beta\}\big)\leq C\frac{\|f_0\|_{L^1(\R^m)}}{\beta}\leq
C \frac{\mu(2Q_{j^{(1)'}}^{(1)'})}{\beta}\leq C\frac{\mu(Q_{j^{(1)'}}^{(1)'})}{\beta}.
\end{eqnarray*}

Proof of (D-2'): It is similar to the argument of (D-1).

Now we select the second-generation cubes $$\big\{Q_{j^{(2)}}^{(2)}\big\}_{j^{(2)}}:=\big\{Q_{j^{(2)''}}^{(2)''}\big\}_j\cup\{Q_{j^{(2)'}}^{(2)'}\}_j\cup Q_2. $$
It is clear that $\big\{Q_{j^{(2)}}^{(2)}\big\}_{j^{(2)}}$ includes three cases:

i)\ $Q_2$:\ this is the only one whose corner is at the origin;

ii) \ $Q_{j^{(2)'}}^{(2)'}$:\ arisen from decomposition of $Q_1$ but itself is not $Q_2$;

iii)\ $Q_{j^{(2)''}}^{(2)''}$:\ arisen  from decomposition of $Q_{j^{(1)'}}^{(1)'}$, which is not $Q_1$.

\smallskip
Then repeat the above argument we can get the third generation cubes
$$\{Q_{j^{(3)}}^{(3)}\}_{j^{(3)}}:=\{Q_{j^{(3)'''}}^{(3)'''}\}_{j^{(3)'''}}\cup\{Q_{j^{(3)''}}^{(3)''}\}_{j^{(3)''}}\cup\{Q_{j^{(3)'}}^{(3)'}\}_{j^{(3)'}}\cup Q_3,$$
in which

i) \ $Q_3$: this is the only one whose corner is at the origin;

ii) \ $Q_{j^{(3)'}}^{(3)'}$:\ arisen from the decomposition of $Q_2$ but itself is not $Q_3$;

iii) \ $Q_{j^{(3)''}}^{(3)''}$:\ arisen  from the decomposition of $Q_{j^{(2)'}}^{(2)'}$, which is not $Q_2$;

iv) \ $Q_{j^{(3)'''}}^{(3)'''}$:\ arisen  from the decomposition of $Q_{j^{(2)''}}^{(2)''}$.

\smallskip
\noindent
For $Q_{j^{(3)}}^{(3)}\in \{Q_{j^{(3)'}}^{(3)'}\}_{j^{(3)'}}\cup Q_3$, they will satisfy the following properties correspondingly:

(A-3)  \ \ \  except $Q_3$, that is , for all $Q_{j^{(3)'}}^{(3)'}$, we have $\beta<\frac{1}{\mu(2Q_{j^{(3)'}}^{(3)'})}
\int_{2Q_{j^{(3)'}}^{(3)'}}|f(y)-A_{Q_2}f(y)| {\,} d\mu(y)\leq 2^m \beta$;

(B-3)  \ \ \  $|A_{Q_{j^{(3)}}^{(3)}}f(x)-A_{Q_2}f(x)|\leq C2^m\beta$ for all $x\in Q_{j^{(3)}}^{(3)}\subset \{Q_{j^{(3)'}}^{(3)'}\}_j\cup Q_3$, where the constant $C$ depends only on the dimensions $m$, $n$ and on the constant appeared in the upper bound of the Poisson kernel as in Proposition~\ref{prop2.1};

(C-3)  \ \ \  $\mu(Q_3)\leq \frac{1}{2^m}\mu(Q_2)$; \quad $\sum_{j^{(3)'}} \mu(Q_{j^{(3)'}}^{(3)'})\leq \frac{2^m-1}{2^m}\mu(Q_2)$;

(D-3)  \ \ \  $|f(x)-A_{Q_2}f(x)|\leq \beta$ for $x$ in each of

\hskip1cm 1) on the set $Q_2\backslash(\cup_{j^{(3)'}} Q_{j^{(3)'}}^{(3)'}\cup Q_3)$,

\hskip1cm 2) on the set $Q_{j^{(3)}}^{(3)}\in \{Q_{j^{(3)'''}}^{(3)'''}\}_{j^{(3)'''}}\cup\{Q_{j^{(3)''}}^{(3)''}\}_{j^{(3)''}}$ and

\hskip1cm 3) on the set  $Q_{j^{(2)}}^{(2)}$ where $Q_{j^{(3)}}^{(3)}$ is located;

\noindent Moreover, we have

(A-3')  \ \ \  $\beta<\frac{1}{\mu(2Q_{j^{(3)}}^{(3)})}\int_{2Q_{j^{(3)}}^{(3)}}|f(y)-A_{Q_j^{(2)}}f(y)| {\,} d\mu(y)\leq 2^m \beta$;

(B-3')  \ \ \  $|A_{Q_{j^{(3)}}^{(3)}}f(x)-A_{Q_{j^{(2)}}^{(2)}}f(x)|\leq C2^m\beta$ for all $x\in Q_{j^{(3)}}^{(3)}$,
 where the constant $C$ depends only on the dimensions $m$, $n$ and on the constant appeared in the upper bound of the
 Poisson kernel as in Proposition~\ref{prop2.1};

(C-3')  \ \ \  $\sum_{j^{(3)}} \mu(Q_{j^{(3)}}^{(3)})\leq C\mu(Q_{j^{(2)}}^{(2)})/\beta$;

(D-3')  \ \ \  $|f(x)-A_{Q_{j^{(2)}}^{(2)}}f(x)|\leq \beta$ for $x$ on the set $Q_{j^{(2)}}^{(2)}\backslash\cup_{j^{(3)}} Q_{j^{(3)}}^{(3)}$.

\smallskip

For the measure relation, we show it in the following figure.
\begin{center}
%*******************************************************************
\tikzstyle{level 1}=[level distance=1.5cm, sibling distance=3 cm]
\tikzstyle{level 2}=[level distance=1.5cm, sibling distance=2cm]
\tikzstyle{level 3}=[level distance=1.5cm, sibling distance=1.5cm]
\begin{tikzpicture} [level/.style={sibling distance=50mm/#1} ]
   \node   (z){$Q_0$}
  child  {node   (a) {$Q_1$}
          child {node (a1) {$Q_2$}
                  child {node (a3) {$Q_3$}
                       child [grow=left] {node (q) {$Q_{j^{(3)}}^{(3)}:$}
                             child [grow=up] {node (r) {$Q_{j^{(2)}}^{(2)}:$}
                                   child [grow=up] {node (s) {$Q_{j^{(1)}}^{(1)}:$} edge from parent[draw=none]}
                             edge from parent[draw=none]}
                        edge from parent[draw=none]}
                  edge from parent node [left] { $  \frac{1}{2^m}$} }
                  child {node (a4) {$Q_{j^{(3)'}}^{(3)'}$} edge from parent node[  right] { $  \frac{2^m-1}{2^m}$} }
          edge from parent node [  left] {$  \frac{1}{2^m}$} }
          child {node (a2) {$Q_{j^{(2)'}}^{(2)'}$}
                child {node (a5) {$Q_{j^{(3)''}}^{(3)''}$} edge from parent node [right] {$  \frac{1}{\beta}$} }
          edge from parent node[  right] {$  \frac{2^m-1}{2^m}$} }
     edge from parent node[ left] {$  \frac{1}{2^m}$}
  }
  child    {node   (b) {$Q_{j^{(1)'}}^{(1)'}$}
         child {node (a6) {$Q_{j^{(2)''}}^{(2)''}$}
              child {node (a7) {$Q_{j^{(3)'''}}^{(3)'''}$} edge from parent node [right] {$  \frac{1}{\beta}$} }
         edge from parent node [right] {$  \frac{1}{\beta}$} }
  edge from parent node[ right] {$ \frac{2^m-1}{2^m}$} };

\end{tikzpicture}
%*******************************************************************
\end{center}
Choosing $\beta$ large enough and then
 summing all $\mu(Q_{j^{(3)}}^{(3)})$ gives
\begin{align*}
\sum_{j^{(3)}} \mu(Q_{j^{(3)}}^{(3)})&\leq \mu(Q_3) +\sum_{j^{(3)'}}\mu(Q_{j^{(3)'}}^{(3)'})+\sum_{j^{(3)''}}\mu(Q_{j^{(3)''}}^{(3)''})+
\sum_{j^{(3)'''}}\mu(Q_{j^{(3)'''}}^{(3)'''})\\
&\leq \mu(Q_2)+\sum_{j^{(2)'}}\frac{1}{\beta}\mu(Q_{j^{(2)'}}^{(2)'})+\sum_{j^{(2)''}}\frac{1}{\beta}\mu(Q_{j^{(2)''}}^{(2)''})\\
&\leq \frac{1}{2}\mu(Q_1)+\frac{1}{\beta}\mu(Q_1)+\sum_{j^{(1)'}}\frac{1}{\beta^2}\mu(Q_{j^{(1)'}}^{(1)'})\\
&\leq \frac{1}{2^2}\mu(Q_0)+\frac{1}{2\beta}\mu(Q_0)+\frac{1}{\beta^2}\mu(Q_0)\\
&\leq 3\mu(Q_0)/2^2.
\end{align*}

We iterate this procedure indefinitely to obtain a doubly indexed family of cubes $\{Q_{j^{(K)}}^{(K)}\}$
%
%Subsequently, we obtain for each natural number $K$ a family of cubes ${Q_{j^{(K)}}^{(K)}}$ such that %outside of their union we have
$$
\sum_{j^{(K)}} \mu(Q_{j^{(K)}}^{(K)})\leq K\mu(Q_0)/2^{K-1}
$$
and that
\begin{align}\label{f-AQf}
|f(x)-A_{Q_0}f(x)|\leq \bar{ C}K\beta, \,\mbox{for all }\, x\in Q_0\backslash \cup_{{j^{(K)}}}Q_{j^{(K)}}^{(K)},
\end{align}
where the constant $\bar C$ depends only on the dimensions $m$, $n$ and on the constant appeared in the upper bound of the Poisson kernel as in Proposition~\ref{prop2.1}.

In fact, to see the argument \eqref{f-AQf}, we point that for any fixed ${j^{(1)}}$, from (D-2) and (D-2'), we get
$|f(x) - A_{Q_{j^{(1)}}^{(1)}}f(x)|\leq \beta$ for every $x$ on $Q_{j^{(1)}}^{(1)}\backslash \cup_{j^{(2)}} Q_{j^{(2)}}^{(2)} $. Moreover, from (B-1), we get that $|A_{Q_{j^{(1)}} ^{(1)}}f(x) - A_{Q_0}f(x)| \leq C2^m\beta$ for every $x$ on $Q_{j^{(1)}}^{(1)}$.  This gives
\begin{align}\label{JN temp B-1}
|f(x)-A_{Q_0}f(x)|\leq \beta+C2^m\beta, \quad {\rm on \ }  Q_{j^{(1)}} ^{(1)}   \backslash (\cup_{j^{(2)}} Q_{j^{(2)}}^{(2)} ).
\end{align}
Combining the above inequality with (D-1), we obtain that
$$  |f(x)-A_{Q_0}f(x)|\leq   C2^m\cdot 2\beta \quad {\rm on \ }  Q_0  \backslash (\cup_{j^{(2)}} Q_{j^{(2)}}^{(2)} ).$$
Again, for any fixed $j^{(2)}$, from (D-3) and (D-3') we also have that
$|f(x) - A_{Q_{j^{(2)}}^{(2)}}f(x)|\leq \beta$ for every $x$ on $Q_{j^{(2)}}^{(2)}\backslash \cup_{j^{(3)}} Q_{j^{(3)}}^{(3)} $,
which combines with (B-3), (B-3') and (B-2), (B-2'), yields
$$  |f(x)-A_{Q_0}f(x)|\leq   C2^m\cdot 3\beta \quad {\rm on \ }  Q_{j^{(2)}}^{(2)}  \backslash (\cup_{j^{(3)}} Q_{j^{(3)}}^{(3)} ),
$$
which, combining (B-1) and \eqref{JN temp B-1}, gives that
$$  |f(x)-A_{Q_0}f(x)|\leq   C2^m\cdot 3\beta \quad {\rm on \ }  Q_0 \backslash (\cup_{j^{(3)}} Q_{j^{(3)}}^{(3)} ). $$
By induction, we obtain  that for all $K\geq 1$, \eqref{f-AQf} holds with $\bar C $ depending only on the dimensions $m$, $n$ and on the constant appeared in the upper bound of the Poisson kernel as in Proposition~\ref{prop2.1}.

Now for any fixed $\alpha>2$, if
$\bar C K\beta\leq \alpha< (K+1)\bar C\beta$ for some integer $K\geq1$,
then we have
\begin{align*}
\mu(\{x\in Q:|f(x)-A_{Q}f(x)|>\alpha\})&\leq\sum_{j^{(K)}} \mu(Q_{j^{(K)}}^{(K)})\leq K\mu(Q_0)/2^{K-1}= 4K\mu(Q_0)/2^{K+1}\\
&\leq 4K e^{-(K+1)\log 2}\mu(Q)\\
&\leq\frac{4\alpha}{\bar C\beta} e^{-\alpha \log 2/(\bar C\beta)}\mu(Q) \\
&=\frac{4\alpha}{\bar C\beta} e^{-\alpha \log 2/(2\bar C\beta)}\cdot e^{-\alpha \log 2/(2c\beta)}\mu(Q)\\
&\leq c_1 e^{-c_2\alpha}\mu(Q),
\end{align*}
where $$c_1 = \sup_{\alpha>0} \frac{4\alpha}{\bar C\beta} e^{-\alpha \log 2/(2\bar C\beta)}\quad{\rm\ and \ }\quad c_2 =\log 2/(2\bar C\beta) .$$
It is clear that $c_1$ and $c_2$ depend only the dimensions $m$ and $n$ and the constant $C$ in the upper bound of the Poisson kernel as in Proposition~\ref{prop2.1} and $c_2<1$.

For $Q'\in \Rf^n$, we also can prove that there exist two positive constants $c_1$ and $c_2$ (depending only the dimensions $m$ and $n$ and the constant $C$ in the upper bound of the Poisson kernel as in Proposition~\ref{prop2.1}, with $c_2<1$) such that
\begin{eqnarray}\label{eq4444.3}
\mu\big(\{x\in Q':|f(x)-A_{Q'}f(x)|>\alpha\}\big)\leq c_1e^{-c_2\alpha}\mu(Q'),
\end{eqnarray}
We point out that
the proof for \eqref{eq4444.3} is almost the same as that of \eqref{eq444.2} for $Q\in \R^m$ and we skip it here.

%The last step is to prove \eqref{eq44.1} from dyadic cubes to balls.
Combining all the estimates above, we obtain that both \eqref{eq444.2} and \eqref{eq4444.3} hold with $c_2<1$.

Based on these two auxiliary facts, we  now prove \eqref{eq44.2}.
To see this, for any ball $B$ in $\mathcal B_1$ with $|x_B|<2r_B$ and $r_B>1$,
we consider the covering of $B$ as in \eqref{covering of B}.
Then the estimates will be split  into the  following cases.

Case 1:  the center of $B$ is in $\R^m$.

It is clear that in this case,  we have $r_{B}\sim \ell(Q)$.

Subcase 1.1: $B\cap \Rf^n=\emptyset$.

In this subcase we have $B\subset Q$, which shows that $\mu(B)\approx \mu(Q)$.
Moreover, using the fact  $r_{B}\sim \ell(Q)$ and then
just repeating the proof for \eqref{eq444.2} by using $f-A_B f$ instead of $f-A_Q f$,
we have that
\begin{eqnarray}\label{eq444.2-1}
\mu\big(\{x\in Q:|f(x)-A_{B}f(x)|>\alpha\}\big)\leq c_1e^{-c_2\alpha}\mu(Q).
\end{eqnarray}

As a consequence, from \eqref{eq444.2-1} we further have
\begin{eqnarray*}
\mu\big(\{x\in B:|f(x)-A_{B}f(x)|>\alpha\}\big)\leq c_1e^{-c_2\alpha}\mu(B).
\end{eqnarray*}

Subcase 1.2:  $B\cap \Rf^n\neq\emptyset$.

In this subcase, we first note that  the part $B\cap \R^m$ is contained in $Q$, and then
following the argument in Subcase 1.1 above, we have that
\begin{eqnarray*}
\mu\big(\{x\in Q:|f(x)-A_{B}f(x)|>\alpha\}\big)\leq c_1e^{-c_2\alpha}\mu(B).
\end{eqnarray*}

Next, we choose larger $Q'$ such that $2B\cap \Rf^n\subset Q'$ and $\ell(Q')\sim r_B$. Thus in this case,
$\mu(Q')\leq C\mu(2B)\leq C\mu(B)$.  So just in the first step of the proof for $Q'$,
we use $f-A_B f$ instead of $f-A_{Q'} f$. It follows that
\begin{eqnarray*}
\mu\big(\{x\in Q':|f(x)-A_{B}f(x)|>\alpha\}\big)\leq c_1e^{-c_2\alpha}\mu(Q')\leq C_1e^{-c_2\alpha}\mu(B).
\end{eqnarray*}
Then combing these two subcases, we have
\begin{eqnarray*}
\mu\big(\{x\in B:|f(x)-A_{B}f(x)|>\alpha\}\big)\leq c_1e^{-c_2\alpha}\mu(B).
\end{eqnarray*}

Case 2:  the center of $B$ is in $\Rf^n$.

Note that in this case we have $\ell(Q')\sim r_B$. We first consider the corresponding cube $Q'$.
Again, just repeating the proof for \eqref{eq4444.3} by using $f-A_B f$ instead of $f-A_Q f$,
we have that
\begin{eqnarray}\label{eq Q'}
\mu\big(\{x\in Q':|f(x)-A_{B}f(x)|>\alpha\}\big)\leq c_1e^{-c_2\alpha}\mu(Q')\leq c_1e^{-c_2\alpha}\mu(B).
\end{eqnarray}

Subcase 2.1: $B\cap \R^m=\emptyset$.

In this subcase, we have that $B\subset Q'$. Hence,
\begin{eqnarray*}
\mu\big(\{x\in B:|f(x)-A_{B}f(x)|>\alpha\}\big)\leq c_1e^{-c_2\alpha}\mu(B).
\end{eqnarray*}

Subcase 2.2: $B\cap \R^m\not=\emptyset$.

%Note that for fixed $\varepsilon>0$ in the definition of $\bmo$,
From the definition of $\bmo$ and the assumption that $\|f\|_{\bmo}=1$, we have that
$$ |f(x)-A_{B}f(x)|\leq |f(x)-A_{Q}f(x)|+|A_Qf(x)-A_{B}f(x)|\leq |f(x)-A_{Q}f(x)|+\log (r_B/r_Q)+1, $$
which gives
\begin{align}\label{eq sub2.2}
\mu(\{x\in Q:|f(x)-A_{B}f(x)|>\alpha\})&\leq \mu(\{x\in Q:|f(x)-A_{Q}f(x)|>\alpha-\log (r_B/r_Q)-1\})\\
&\leq \mu(\{x\in Q:|f(x)-A_{Q}f(x)|>\alpha/2-\log (r_B/r_Q)\}),\nonumber
\end{align}
since at the very beginning we already point out that it suffices to consider $\alpha>2$.

If $\alpha\leq2\log (r_B/r_Q)$, then
\begin{align*}
\mu\big(\{x\in Q:|f(x)-A_{B}f(x)|>\alpha\}\big) &\leq \mu(Q) = Cr_Q r_Q^{m-1}\leq C{r_Q\over r_B}\mu(B)\\
&\leq Ce^{-\log(r_B/r_Q)}\mu(B)\\
&\leq Ce^{-\alpha/2}\mu(B),
\end{align*}
where the constant $C$ depends only on the dimensions $m$ and $n$.

If $\alpha>2\log (r_B/r_Q)$, then by applying \eqref{eq444.2} we obtain that
\begin{align*}
\mu\big(\{x\in Q:|f(x)-A_{Q}f(x)|>\alpha/2-\log (r_B/r_Q)\}\big)&\leq c_1 e^{ -c_2\alpha/2+c_2\log (r_B/r_Q)}\mu(Q)\\
&\leq c_1 e^{ -c_2\alpha/2+\log (r_B/r_Q)}\mu(Q)\\
&\leq C \Big({r_B\over r_Q}\Big) \, r_Q^m\, e^{ -c_2\alpha/2}\\
&\leq C r_B r_Q^{m-1}\, e^{ -c_2\alpha/2}\\
&\leq C \mu(B) e^{ -c_2\alpha/2},
\end{align*}
where in the second inequality we use the fact that $c_2<1$ and in the last inequality we use the fact that
$r_Q^{m-1}\leq r_B^{n-1}$.

Then combining this estimate and \eqref{eq Q'}, we have that
\begin{eqnarray*}
\mu\big(\{x\in B:|f(x)-A_{B}f(x)|>\alpha\}\big)\leq c_1e^{-c_2\alpha}\mu(B),
\end{eqnarray*}
which implies that \eqref{eq44.2} holds.

We now prove \eqref{eq44.223}.
For any $B\in \mathcal B_0$,
we now consider again the covering of $B$ as in \eqref{covering of B}.
Then it is clear that from \eqref{eq Q'} we first have
\begin{eqnarray*}
\mu\big(\{x\in Q':|f(x)-A_{B}f(x)|>\alpha\}\big)\leq c_1e^{-c_2\alpha}\mu(B).
\end{eqnarray*}

Next we consider $Q$, which is one part of the covering of $B$ as in \eqref{covering of B}.
Now recall that from the definition of  $ \mathcal B_0$, it is clear that  we have
$x_B\in \Rf^n, r_B\geq 2, K\subset B $ and $  r_B^{\frac{n-1}{m-1}}<r_{B^m}<r_B$.
This gives that
\begin{align}\label{rB over rQ}
 \log\Big( {r_B\over r_Q} \Big) <   \log\Big( {r_Q^{m-1\over n-1}\over r_Q} \Big) =
{m-1\over n-1} \log r_Q \leq {m-1\over n-1} \log r_B.
\end{align}

By using similar estimate as in
Subcase 2.2 above, we have that:

If $\alpha\leq 2{m-1\over n-1} \log r_B$, then we have
\begin{align*}
\mu  \big(\{x\in Q:|f(x)-A_{B}f(x)|>\alpha\}\big)
&\leq \mu(Q)\leq e^{ 2{m-1\over n-1} } e^{-2{m-1\over n-1}} \mu(Q)\\
&= e^{ 2{m-1\over n-1} } e^{ -2{m-1\over n-1}{\log r_B\over \log r_B} }\mu(Q)\\
&\leq C e^{ -{\alpha\over \log r_B} }\mu(B).
\end{align*}

If $\alpha> 2{m-1\over n-1} \log r_B$, then similar to \eqref{eq sub2.2}  we have
\begin{align}\label{eq sub2.2 for B0}
\mu \big(\{x\in Q:|f(x)-A_{B}f(x)|>\alpha\}\big)
\leq \mu \big(\{x\in Q:|f(x)-A_{Q}f(x)|>\alpha/2-\log (r_B/r_Q)\}\big).
\end{align}
Hence, from \eqref{rB over rQ} we further have
\begin{align*}
\mu \big(\{x\in Q:|f(x)-A_{B}f(x)|>\alpha\}\big)
&\leq \mu \Big(\Big\{x\in Q:|f(x)-A_{Q}f(x)|>{\alpha\over2}-{m-1\over n-1} \log r_B\Big\}\Big) \nonumber\\
&\leq \mu \Big(\Big\{x\in Q:|f(x)-A_{Q}f(x)|>{\alpha\over 2{m-1\over n-1} \cdot{\log r_B \over \log 2}}- \log2 \Big\}\Big) \nonumber\\
&\leq  c_1 \mu(Q) e^{-c_2\bigg({\alpha\over 2{m-1\over n-1}\cdot {\log r_B \over \log 2}}- \log2\bigg)}\\
&\leq c_12^{c_2}\mu(B) e^{-\tilde{c}_2 {\alpha\over \log r_B }}, \nonumber
\end{align*}
where $\tilde c_2 = {c_2 \over 2{m-1\over n-1}\cdot {1\over \log 2}}$,
the second inequality follows from the basic fact that
$$  {m-1\over n-1}\cdot {\log r_B \over \log 2}  \left({\alpha\over2}-{m-1\over n-1} \log r_B\right) >  {\alpha\over2}-{m-1\over n-1} \log r_B
$$ since
$$\alpha> 2{m-1\over n-1} \log r_B\ \ \ {\rm and}\ \ \ {m-1\over n-1}\cdot {\log r_B \over \log 2}>1,
$$
the third inequality follows from \eqref{eq444.2}.

Combining all the estimates above, we have showed \eqref{eq44.223}.

\medskip

We now turn to the second case.

{\bf Case II:}\ $|x_B|>2r_B$ or $r_B\leq1$.

\medskip

Actually, the estimates in this case are easier and direct, and are very similar to the proof of
the John--Nirenberg  inequality for classical BMO space (see for example the proof in \cite{Gra}).

The main reason is that there is no cube in this case that is non-doubling. Moreover,
for  every dyadic cube in this case, when we  decompose it into the next levels, all the subcubes are always similar to the
original one and belong to $\mathcal{I}_2$.  In other words, there is no dyadic cubes arisen from decompositions belong to
$\mathcal{I}_1$ and hence we do not have the chain of $Q_1,Q_2,\ldots$ as arisen from the estimates of \eqref{eq444.2}.

\medskip

Combining {\bf Cases I} and {\bf II}, we get that the proof of of  Theorem \ref{main1} is complete.

\end{proof}

\section{Interpolation between $L^p(M)$ and $\bmo$}
\setcounter{equation}{0}

Recall that the sharp maximal function $\mathcal M^\sharp $ and the non-tangential maximal function $\N_L $
are given \eqref{sharp} and \eqref{maximal}, respectively.
Let us prove the following result.

\begin{lemma}\label{lemma5.1}
There exits small enough $\gamma>0$ and large enough $K>0$ such that for all $\lambda>0$  and all locally integrable functions $f$, we have
\begin{eqnarray}\label{goodlambda3}
 \mu\big(\{  x\in M: |f(x)|>K\lambda,\ \mathcal M^\sharp f(x) \leq \gamma\lambda \}\big)
 \leq C\gamma\mu\big(\{x\in M: \N_L  f(x)>\lambda\}\big)\nonumber
\end{eqnarray}
\end{lemma}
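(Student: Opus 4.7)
The plan is a Fefferman--Stein good-$\lambda$ argument adapted to the dyadic system $\mathcal{D}_1\cup\mathcal{D}_2$ and the ball taxonomy $\mathcal{B}_0\cup\mathcal{B}_1$ defined earlier.

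\textbf{Step 1 (setup and Whitney decomposition).} Let $\Omega_\lambda:=\{x\in M:\mathcal{N}_L f(x)>\lambda\}$. I first observe that $\Omega_\lambda$ is open (continuity of $y\mapsto e^{-t\sqrt{L}}f(y)$ combined with the locally finite nature of the supremum defining $\mathcal{N}_L$) and that $\{|f|>K\lambda\}\subset\Omega_\lambda$ for $K\ge 1$ (by Lebesgue differentiation as $r_Q\to 0$). If $\mu(\Omega_\lambda)=\infty$ there is nothing to prove, so assume it is finite. I decompose $\Omega_\lambda=\bigsqcup_k Q_k$ into maximal disjoint cubes from $\mathcal{D}_1\cup\mathcal{D}_2$ such that each $Q_k\subset\Omega_\lambda$ but the dyadic parent $\widetilde Q_k\not\subset\Omega_\lambda$. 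It then suffices to prove
\[
\mu\big(\{x\in Q_k:|f(x)|>K\lambda,\ \mathcal{M}^\sharp f(x)\le\gamma\lambda\}\big)\le C\gamma\,\mu(Q_k)
\]
for every $k$ and sum.

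\textbf{Step 2 (local control of $A_{r_{Q_k}}f$).} Fix $k$ with the above set nonempty; choose a witness $x_0\in Q_k$ with $\mathcal{M}^\sharp f(x_0)\le\gamma\lambda$. By maximality pick $y_k\in\widetilde Q_k\setminus\Omega_\lambda$, so $\mathcal{N}_L f(y_k)\le\lambda$. Testing the supremum defining $\mathcal{N}_L f(y_k)$ against the cube $\widetilde Q_k$ yields $|A_{r_{\widetilde Q_k}}f(z)|\le\lambda$ for $z\in\widetilde Q_k$, except possibly on the excluded ``corner'' $E_k=\{z\in\widetilde Q_k:|z|<r_{\widetilde Q_k}^{(m-n)/(m-2)}/2\}$ when $\widetilde Q_k\in\mathcal{D}_2$. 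Since $\mu(E_k)\lesssim r_{\widetilde Q_k}^{m(m-n)/(m-2)}$ and the exponent $m(m-n)/(m-2)<m$, we have $\mu(E_k)\ll\mu(Q_k)\sim r_{Q_k}^m$, so this loss is absorbed. Passing from scale $r_{\widetilde Q_k}=2r_{Q_k}$ to scale $r_{Q_k}$ costs only a constant via the oscillation estimate
\[
|A_sf(z)-A_tf(z)|\le C(1+|\log(s/t)|)\,\mathcal{M}^\sharp f(x_0),
\]
which is derived by the same integral chain as in the proof of \eqref{key step 4.1 pre}. Combining, $|A_{r_{Q_k}}f(z)|\le(1+C\gamma)\lambda$ on $Q_k\setminus E_k$.

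\textbf{Step 3 (Chebyshev and case analysis).} Pick a ball $B_k\supset Q_k$ with $r_{B_k}\sim r_{Q_k}$ and $\mu(B_k)\sim\mu(Q_k)$; according to the geometric taxonomy, $B_k$ lies in $\mathcal{B}_0$ or $\mathcal{B}_1$. For $K\ge 4$ and $\gamma$ small, $|f(x)|>K\lambda$ on $Q_k\setminus E_k$ forces $|f(x)-A_{r_{Q_k}}f(x)|>K\lambda/2$. When $B_k\in\mathcal{B}_1$, the bound $\mathcal{M}^\sharp f(x_0)\le\gamma\lambda$ gives $\int_{B_k}|f-A_{r_{B_k}}f|\,d\mu\le\gamma\lambda\,\mu(B_k)$; after replacing $A_{r_{B_k}}f$ by $A_{r_{Q_k}}f$ (another constant loss via the same oscillation estimate), Chebyshev delivers $\le C\gamma\mu(Q_k)/K$, which is $\le C'\gamma\mu(Q_k)$. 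When $B_k\in\mathcal{B}_0$, a factor $\log r_{B_k}$ appears in the sharp-maximal bound, and naive Chebyshev is insufficient; instead I invoke the John--Nirenberg inequality \eqref{eq44.11} of Theorem \ref{main1}(ii), whose tail decays like $\exp(-c\alpha/((\log r_{B_k})\gamma\lambda))$. With $\alpha=K\lambda/2$ and $K$ chosen large enough in terms of $\log(1/\gamma)$, the exponential decay matches and defeats the $\log r_{B_k}$ inflation uniformly, again yielding $\le C\gamma\mu(Q_k)$. Summing over $k$ completes the proof.

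\textbf{Main obstacle.} The principal difficulty is the $\mathcal{B}_0$ case: the weight $1/\log r_B$ built into $\mathcal{M}^\sharp$ on $\mathcal{B}_0$-balls inflates the Chebyshev estimate by an unbounded factor $\log r_{B_k}$, and the only way to cancel it with universal $K,\gamma$ is to exploit the fact that the John--Nirenberg decay rate in Theorem \ref{main1}(ii) is precisely $1/\log r_{B_k}$. The balance of these two effects is what fixes the admissible ranges of $K$ and $\gamma$. A secondary but essentially book-keeping issue is the corner exclusion in the definition of $\mathcal{N}_L$ on $\mathcal{D}_2$ cubes; this is absorbed using the strict inequality $m(m-n)/(m-2)<m$ that comes from $m>n\ge 3$.
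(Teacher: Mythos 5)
Your overall strategy (Whitney--type decomposition of $\Omega_\lambda$ into maximal cubes, then Chebyshev on each) is aligned with the paper's, but the two places where the non-doubling geometry actually bites are both handled incorrectly, and these are precisely the parts the paper spends most of its effort on.

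\textbf{The corner exclusion in Step~2 is not ``book-keeping.''} You dismiss the excluded set $E_k=\{z\in\widetilde Q_k:|z|<r_{\widetilde Q_k}^{(m-n)/(m-2)}/2\}$ by noting $\mu(E_k)\lesssim r_{\widetilde Q_k}^{m(m-n)/(m-2)}\ll\mu(Q_k)$, but this estimate carries no factor of $\gamma$. On $E_k$ you have no control on $A_{r_{\widetilde Q_k}}f$, hence no lower bound on $|f-A_{r_{Q_k}}f|$ for points with $|f|>K\lambda$, hence no Chebyshev gain; all you can say is $\mu(E_k\cap\{|f|>K\lambda,\ \mathcal M^\sharp f\le\gamma\lambda\})\le\mu(E_k)\le C_0\,\mu(Q_k)$ with $C_0$ a fixed constant. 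That is useless for a good-$\lambda$ inequality. (Moreover $\mu(E_k)\ll\mu(Q_k)$ only holds for large $r_{Q_k}$; at the threshold $r_{Q_k}\sim 2$ the ratio is a fixed constant.) The paper's treatment of this region is the core of the proof: it attaches to the corner the combined cube $Q_b=Q_m\cup Q_n\in\mathcal D_1$, climbs the chain of ancestors of $Q_n$ until a stopping scale, observes that the resulting $Q'_b$ lies in the family $\mathcal B_1^\varepsilon$ for a suitable small $\varepsilon>0$ (so there is no log-inflation), and then exploits the fact that there are at most $2^m$ maximal $\mathcal I_1$-cubes so that $\mu(Q'_b)$ can be replaced by $\mu(\Omega_\lambda)$ on the right-hand side of the good-$\lambda$ inequality. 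None of this appears in your write-up.

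\textbf{The appeal to John--Nirenberg in Step~3 does not close the $\mathcal B_0$ gap.} First, \eqref{eq44.11} requires control of $\|f\|_{\bmo}$, i.e.\ of the oscillation over \emph{all} admissible balls, whereas in the good-$\lambda$ argument you only have $\mathcal M^\sharp f(x_0)\le\gamma\lambda$ at a single witness point $x_0\in Q_k$; this bounds the averages over balls containing $x_0$, not over sub-balls of $B_k$ away from $x_0$, so there is no local BMO bound to plug into John--Nirenberg. Second, even granting it, the tail you obtain is $\exp\bigl(-c_2K/(2\gamma\log r_{B_k})\bigr)$, and forcing this to be $\lesssim\gamma$ uniformly in $r_{B_k}$ requires $K\gtrsim \gamma(\log r_{B_k})\log(1/\gamma)$, which is not a universal constant. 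The paper never invokes John--Nirenberg in this proof; it arranges the decomposition so that all Chebyshev steps only ever call on $\mathcal B_1$-type (or $\mathcal B_1^\varepsilon$-type) balls, sidestepping the $\log r_B$ weight entirely. In summary, your proof identifies the right tension but neither of your resolutions is valid, and the actual argument requires the combined-cube construction, the $\varepsilon$-parameter, and the $2^m$-multiplicity bound that you have not reproduced.
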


\begin{proof}
To show this lemma,
it suffices to prove that there exit small enough $\gamma>0$ and large enough $N>0$ such that for all $\lambda>0$  and all locally integrable functions $f$, we have the following estimates:
\begin{eqnarray}\label{goodlambda1}
 \mu(\{  x\in \R^m: |f(x)|>N\lambda,\ \mathcal M^\sharp f(x) \leq \gamma\lambda \})
 \leq C\gamma\mu(\{x\in M: \mathcal N_L f(x)>\lambda\})
\end{eqnarray}
and
\begin{eqnarray}\label{goodlambda2}
 \mu(\{  x\in \Rf^n: |f(x)|>N\lambda,\ \mathcal M^\sharp f(x) \leq \gamma\lambda \})
 \leq C\gamma\mu(\{x\in \Rf^n: \mathcal N_L f(x)>\lambda\}).
\end{eqnarray}
Then combining \eqref{goodlambda1} and \eqref{goodlambda2} we can obtain \eqref{goodlambda3}, which finishes the proof.

We first sketch the proof for \eqref{goodlambda2} since it is similar to the proof for classical good-$\lambda$ inequality (see for example \cite{Gra}, see also \cite[Lemma 5.3]{DY1}).

To begin with, we assume that $\{x\in \Rf^n: \mathcal N_L f(x)>\lambda\}$ has finite measure, otherwise there is nothing to prove. Then we
set $E_{\lambda}:=\{  x\in \Rf^n: |f(x)|>N\lambda,\ \mathcal M^\sharp f(x) \leq \gamma\lambda \}$, $\Omega_\lambda:=\{x\in \Rf^n: \widetilde{\mathcal N}_L f(x)>\lambda\}$,
where
$$\widetilde{\mathcal N}_L f(x)=\sup_{\substack{Q: {\rm\ dyadic\ cubes\ in} \Rf^n\\ Q\ni x}} \sup_{y\in Q} \big|e^{-r_Q\sqrt{L}}f(y)\big|.$$
Note that for $x\in \Rf^n$,  $|f(x)|\leq \widetilde{\mathcal N}_L f(x)\leq \mathcal N_L f(x)$. Hence, to prove  \eqref{goodlambda2} it suffices to prove
\begin{align}\label{goodlambda2'}
\mu(E_\lambda) \leq C\gamma\mu(\Omega_\lambda).
\end{align}
To see this, we now decompose $\Omega_\lambda$ into pairwise disjoint dyadic cubes. Since $\Omega_\lambda$ has finite measure, for each $x\in \Omega_\lambda $
there is a maximal dyadic cube $Q_x$ in $\Rf^n$ such that
$$   \sup_{y\in Q_x} \big|e^{-r_{Q_x}\sqrt{L}}f(y)\big|>\lambda.   $$
We use $\widetilde {Q_x}$ to denote the father of $Q_x$ in the system of dyadic cubes in $\Rf^n$. Then, since $Q_x$ is maximal, we know that
$$   \sup_{y\in \widetilde {Q_x}} \big|e^{-r_{\widetilde {Q_x}}\sqrt{L}}f(y)\big|\leq \lambda.   $$
Let $Q_j$ be the collection of all such maximal dyadic cubes containing all $x$ in $\Omega_\lambda$, we have that
$$ \Omega_\lambda=\bigcup_jQ_j, $$
where all $Q_j$ are obviously pairwise disjoint. For each $Q_j$, following similar estimates as in \cite[Lemma 5.3]{DY1}
we can prove that
$$   \mu(\{  x\in Q_j: |f(x)|>N\lambda,\ \mathcal M^\sharp f(x) \leq \gamma\lambda \})
 \leq C\gamma\mu(Q_j).
$$
Then by using pairwise disjointness, summing over $j$, we obtain that \eqref{goodlambda2'} holds, and hence \eqref{goodlambda2} holds.

\medskip
We now prove \eqref{goodlambda1}. % since the second inequality \eqref{goodlambda2} is similar and easier.
In comparison with \eqref{goodlambda2}, we point out that in the definition of $\mathcal N_L f(x)$, for the dyadic cubes  $Q\in \mathcal{I}_1$, we skip a corner of the cube. And so we need the cubes in (3) of the definition of $\mathcal{D}_1$. Hence the decomposition in this case is essentially different from the previous setting (\cite{Gra} or \cite{DY1}).

To begin with, define
$$
\overline{\mathcal N}_L f(x):=\max\left\{\sup_{\substack{Q: {\rm\ dyadic\ cubes\ in \ } \R^m\\ Q\ni x,Q\in \mathcal{I}_2}} \sup_{y\in Q} \big|e^{-r_Q\sqrt{L}}f(y)\big|,\sup_{\substack{Q: {\rm\ dyadic\ cubes\ in \  } \R^m\\ Q\ni x,Q\in \mathcal{I}_1}} \sup_{y\in Q,|y|>r_Q^{\frac{m-n}{m-2}}/2} \big|e^{-r_Q\sqrt{L}}f(y)\big|\right\}.
$$
We again set $E_{\lambda}=\{  x\in \R^m: |f(x)|>N\lambda,\ \mathcal M^\sharp f(x) \leq \gamma\lambda \}$ and
$\Omega_\lambda:=\{x\in \R^m\backslash K: \overline{\mathcal N}_L f(x)>\lambda\}$.
We assume that $\Omega_\lambda$ has finite measure since otherwise there is nothing to prove.

For $x\in \Omega_\lambda$, according to the definition of  $\overline{\mathcal N}_L f(x)$,  we have three different types of cubes $Q_x$ that contains $x$:

Case (1): $Q_x\in \mathcal I_2$, and $\widetilde {Q_x}\in \mathcal I_2$ \ \ (where $\widetilde {Q_x}$ is the father of $Q_x$), such that
$$ \sup_{y\in Q_x} |e^{-r_{Q_x} \sqrt L} f(y)| >\lambda\quad{\rm and}\quad  \sup_{y\in \widetilde{Q_x}} |e^{-r_{\widetilde{Q_x}} \sqrt L} f(y)| \leq\lambda;$$

Case (2): $Q_x\in \mathcal I_2$, and $\widetilde {Q_x}\in \mathcal I_1$, satisfying
$$  \sup_{y\in Q_x} |e^{-r_{Q_x} \sqrt L} f(y)| >\lambda    $$
and
$$  \sup_{y\in \widetilde{Q_x}, |y|\geq r_{\widetilde{Q_x}}^{\frac{m-n}{m-2}}/2 } |e^{-r_{\widetilde{Q_{x}}} \sqrt L} f(y)| \leq\lambda;  $$

Case (3): $Q_x\in \mathcal I_1$, and of course $\widetilde {Q_x}\in \mathcal I_1$, satisfying that
$$  \sup_{y\in Q_x, |y|\geq r_{Q_x}^{\frac{m-n}{m-2}}/2 } |e^{-r_{Q_x} \sqrt L} f(y)| >\lambda    $$
and
$$  \sup_{y\in \widetilde{Q_x}, |y|\geq r_{\widetilde{Q}_x}^{\frac{m-n}{m-2}}/2 } |e^{-r_{\widetilde{Q_{x}}} \sqrt L} f(y)| \leq\lambda;  $$

\medskip

Then, given $x\in \Omega_\lambda$, there must be a maximal dyadic cube $Q_x$ which is contained in one of the three cases above. Similarly for any $y\in \Omega_\lambda$, there must be a maximal dyadic cube $Q_y$ which is contained in one of the three cases above. Moreover, since $Q_x$ and $Q_y$ are dyadic and maximal, we have that
$$ Q_x=Q_y \quad{\rm or}\quad Q_x\cap Q_y =\emptyset.$$

We now split the cubes $\{Q_x\}_{x\in \Omega_\lambda}$ into three groups as follows:
$$\{Q_x\}_{x\in \Omega_\lambda} = \{Q_i^{(1)}\} \cup  \{Q_j^{(2)}\}\cup  \{Q_k^{(3)}\},$$
where $Q_i^{(1)}$ are in Case (1), $Q_j^{(2)}$ are in Case (2) and $Q_k^{(3)}$ are in Case (3). We point out that the number of cubes $Q_k^{(3)}$ is at most $2^m$, since $Q_k^{(3)}\in \mathcal I_1$ and $Q_k^{(3)}$ is maximal.

We now have
\begin{eqnarray}\label{inclusion}
 E_\lambda\subset \Omega_\lambda=\Big(\bigcup_i Q_i^{(1)}\Big) \bigcup \Big(\bigcup_j Q_j^{(2)}\Big) \bigcup
\Big(\bigcup_k  Q_k^{(3)} \Big).
\end{eqnarray}

We first prove that for all $Q_i^{(1)}$ in Case (1), we have the estimate
\begin{eqnarray}\label{estiamteQj}
\mu(\{  x\in Q_i^{(1)}: |f(x)|>N\lambda,\ \mathcal M^\sharp f(x) \leq \gamma\lambda \})
 \leq C\gamma\mu(Q_i^{(1)}).
\end{eqnarray}

\color{black}

\bigskip
To see this, now denote $Q_i^{(1)}=Q$ and $\widetilde{Q}$ is the father of $Q$.
For $x\in Q$,
$$
f(x)-A_{\widetilde{Q}}f(x)>N\lambda-\lambda>\lambda.
$$
Then we conclude that for any $\xi\in Q$,
\begin{align*}
\mu(\{  x\in Q: |f(x)|>N\lambda \})&\leq \mu(\{x\in Q: |f(x)-A_{\widetilde{Q}}f(x)|>\lambda\})\\
&\leq \frac{1}{\lambda}\int_{Q} |f(x)-A_{\widetilde{Q}}f(x)| {\,} d\mu(x)\\
&\leq \frac{2^m\mu(Q)}{\lambda} \frac{1}{\mu(\widetilde{Q})} \int_{\widetilde{Q}} |f(x)-A_{\widetilde{Q}}f(x)| {\,} d\mu(x)\\
&\leq \frac{2^m\mu(Q)}{\lambda}\mathcal M^\sharp f(\xi).
\end{align*}
When we go back to estimate~\eqref{estiamteQj}, we may assume that there exists $\xi\in Q$ and $M^\sharp f(\xi)\leq \gamma\lambda$.
Then
\begin{eqnarray*}
\mu(\{  x\in Q: |f(x)|>N\lambda,\ \mathcal M^\sharp f(x) \leq \gamma\lambda \})
 \leq 2^m\gamma\mu(Q),
\end{eqnarray*}
which implies that the estimate~\eqref{estiamteQj} holds.

Then we prove that for all $Q_j^{(2)}$ in Case (2), we have the estimate
\begin{eqnarray}\label{estiamteQj2}
\mu(\{  x\in Q_j^{(2)}: |f(x)|>N\lambda,\ \mathcal M^\sharp f(x) \leq \gamma\lambda \})
 \leq C\gamma\mu(Q_j^{(2)}).
\end{eqnarray}

\color{black}

\smallskip
To see this, now denote $Q_j^{(2)}=Q$, denote $\widetilde{Q}$  the father of $Q$.
Note that if $Q\in \mathcal{I}_2$ and $\widetilde{Q}\in \mathcal{I}_1$. Then for any $x\in Q$, we have
$|x|>r_{\widetilde{Q}}^{(m-n)/(m-2)}/2$. Then recall that from the way we choose $Q$,
$$
\sup_{y\in \widetilde{Q},|y|>r_{\widetilde{Q}}^{\frac{m-n}{m-2}}/2} |A_{\widetilde{Q}}f(y)|\leq\lambda.
$$
This implies that for every $x\in Q$, we have
$$
f(x)-A_{\widetilde{Q}}f(x)>N\lambda-\lambda>\lambda.
$$
Then for what is following to prove ~\eqref{estiamteQj2} is similar to the proof of~\eqref{estiamteQj}.
%we conclude that for any $\xi\in Q$,
%\begin{align*}
%\mu(\{  x\in Q: |f(x)|>N\lambda \})&\leq \mu(\{x\in Q: |f(x)-A_{\widetilde{Q}_n}f(x)|>\lambda\})\\
%&\leq \frac{1}{\lambda}\int_{Q} |f(x)-A_{\widetilde{Q}_n}f(x)| {\,} d\mu(x)\\
%&\leq \frac{C2^m\mu(Q)\log r_Q}{\lambda} \frac{1}{\mu(\widetilde{Q}\cup\widetilde{Q}_n)\log r_Q} \int_{\widetilde{Q}\cup \widetilde{Q}_n} |f(x)-A_{\widetilde{Q}_n}f(x)| {\,} d\mu(x)\\
%&\leq \frac{C2^m\mu(Q)\log \mu(Q)}{\lambda}\mathcal M^\sharp f(\xi).
%\end{align*}
%When we go back to estiamte~\eqref{estiamteQj2}, we may assume that there exists $\xi\in Q$ and $M^\sharp f(\xi)\leq \gamma\lambda$.
%Then
%\begin{eqnarray*}
%\mu(\{  x\in Q: |f(x)|>N\lambda,\ \mathcal M^\sharp f(x) \leq \gamma\lambda \})
% \leq 2^m\gamma\mu(Q)\log\mu(Q),
%\end{eqnarray*}
%which the estimate~\eqref{estiamteQj2} holds.

In the end, we prove that for all $Q_k^{(3)}$ in Case (3), we have the estimate
\begin{eqnarray*}
\mu\big(\big\{  x\in Q_k^{(3)} : |f(x)|>N\lambda,\ \mathcal M^\sharp f(x) \leq \gamma\lambda \big\}\big)
 \leq 2^m\gamma\mu(\{x\in M: \mathcal N_L f(x) >\lambda\}).
\end{eqnarray*}
To see this, we denote $Q_k^{(3)}=Q$ and denote $\widetilde{Q}$ the father of $Q$. We
recall that in this case,
$$  \sup_{y\in \widetilde{Q}, |y|\geq r_{\widetilde{Q}}^{\frac{m-n}{m-2}}/2 } |e^{-r_{\widetilde{Q}} \sqrt L} f(y)| \leq\lambda.  $$
This implies that for every $x\in Q$ and $|x|\geq r_{Q}^{\frac{m-n}{m-2}}$, we have $|x|\geq r_{Q}^{\frac{m-n}{m-2}}\geq r_{\widetilde{Q}}^{\frac{m-n}{m-2}}/2$ and thus
$$
f(x)-A_{\widetilde{Q}}f(x)>N\lambda-\lambda>\lambda.
$$

Then
\begin{align*}
&\mu(\{  x\in Q\cap \{|x|\geq r_{Q}^{\frac{m-n}{m-2}}\}: |f(x)|>N\lambda \})\\
&\leq \mu(\{x\in Q\cap \{|x|\geq r_{Q}^{\frac{m-n}{m-2}}: |f(x)-A_{\widetilde{Q}}f(x)|>\lambda\})\\
&\leq \frac{1}{\lambda}\int_{Q} |f(x)-A_{\widetilde{Q}}f(x)| {\,} d\mu(x)\\
&\leq \frac{2^m\mu(Q)}{\lambda} \frac{1}{ \mu(\widetilde{Q})} \int_{\widetilde{Q}} |f(x)-A_{\widetilde{Q}}f(x)| {\,} d\mu(x)\\
&\leq \frac{C2^m}{\lambda}\mathcal M^\sharp f(\xi)\,\mu(Q).
\end{align*}
Hence, we obtain that
\begin{eqnarray*}
\mu(\{  x\in Q \cap \{|x|\geq r_{Q}^{\frac{m-n}{m-2}}\}: |f(x)|>N\lambda,\ \mathcal M^\sharp f(x) \leq \gamma\lambda \})
\leq C2^m\gamma\mu(Q).
\end{eqnarray*}

Then it remains to prove that
\begin{eqnarray*}
\mu(\{  x\in Q \cap \{|x|< r_{Q}^{\frac{m-n}{m-2}}\}: |f(x)|>N\lambda,\ \mathcal M^\sharp f(x) \leq \gamma\lambda \})
\leq C2^m\gamma\mu(\{x\in M: \mathcal N_L f(x)>\lambda\}).
\end{eqnarray*}
Note that we use $\mu(\{x\in M: \mathcal N_L f(x)>\lambda\})$ instead of $\mu(Q)$.
Then we can find the smallest dyadic cube $Q_m\in \mathcal{I}_1$ such that $Q \cap \{|x|<r_{Q}^{\frac{m-n}{m-2}}\}\subset Q_m$ and $r_{Q_m}\sim r_{Q}^{\frac{m-n}{m-2}}$. And we find a dyadic cube $Q_n\subset \Rf^n$ and $Q_n\in \mathcal{I}_1$ such that $r_{Q_n}=r_Q^{m/n}$. Combining $Q_m$ and $Q_n$ we have a new cube $Q_b=Q_m\cup Q_n$ which belongs to (3) of definition of the maximal function $\mathcal{N}_L$. Note that $r_{ Q_b}=r_{Q_n}=r_Q^{m/n}$.
Now if we have
$$  \sup_{y\in Q_b} |e^{-r_{Q_b} \sqrt L} f(y)| \leq\lambda,  $$
then following the similar proof of~\eqref{estiamteQj}, we obtain that
$$
\mu(\{  x\in Q \cap \{|x|< r_{Q}^{\frac{m-n}{m-2}}/2\}: |f(x)|>N\lambda,\ \mathcal M^\sharp f(x) \leq \gamma\lambda \})
\leq C2^m\gamma\mu(Q_b)\leq C2^m\gamma\mu(Q).
$$
So we only need to consider the case that
$$  \sup_{y\in Q_b} |e^{-r_{Q_b} \sqrt L} f(y)| >\lambda.  $$
We now point out that there must be one $Q'_n$ from the ancestors of $Q_n$  such that
 $$
 \sup_{y\in Q_m\cup Q'_n} |e^{-r_{Q'_n} \sqrt L} f(y)| > \lambda
$$
and
$$
 \sup_{y\in Q_m\cup \widetilde{Q'_n}} |e^{-r_{\widetilde{Q'_n}} \sqrt L} f(y)| \leq \lambda.
$$
Otherwise $\mu(\{x\in M: \mathcal N_L f(x)>\lambda\})$ would be infinity. The key point here is that there exists a small enough $\varepsilon>0$ such that
$$
r_{Q_m}\sim r_{Q}^{\frac{m-n}{m-2}}<r_{Q}^{\frac{m}{n}\cdot\frac{n-\varepsilon}{m-\varepsilon}}\leq r_{Q_n'}^{\frac{n-\varepsilon}{m-\varepsilon}}.
$$
That means $Q'_b:=Q_m\cup Q'_n\in \mathcal{B}_1^{\varepsilon}$. Also note that $Q'_b\subset \{x\in M: \mathcal N_L f(x)>\lambda\}$. Then following the similar proof of~\eqref{estiamteQj}, we obtain that
\begin{eqnarray*}
&&\mu(\{  x\in Q \cap \{|x|< r_{Q}^{\frac{m-n}{m-2}}/2\}: |f(x)|>N\lambda,\ \mathcal M^\sharp f(x) \leq \gamma\lambda \})\\
&&\leq C2^m\gamma\mu(Q'_b)\leq C2^m\gamma\mu(\{x\in M: \mathcal N_L f(x)>\lambda\}).
\end{eqnarray*}
Note that when we add all the measure together,  the number of $Q_k^{(3)}$ in Case (3) are at most $2^m$. So in the above estimate we can  use $\mu(\{x\in M: \mathcal N_L f(x)>\lambda\})$ instead of $\mu(Q)$.

Thus, combining all the cases above, and the inclusion as in \eqref{inclusion}, we obtain that
Lemma~\ref{lemma5.1} holds.
\end{proof}

\begin{theorem}\label{th interpolation}
Let $1\leq s\leq q$. Assume that $T$  is a sublinear operator that is
bounded on $L^q (M)$, $1 \leq q < \infty$, and
$$ \|\mathcal M^{\sharp} (Tf)\|_{L^\infty(M)} \leq  \|f\|_{L^\infty(M)}. $$
Then $T$ is bounded on $L^p(M)$ for all $q<p<\infty$.
\end{theorem}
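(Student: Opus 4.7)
The strategy is a Fefferman--Stein style real interpolation, built entirely on the good-$\lambda$ inequality of Lemma~\ref{lemma5.1} and the $L^p$-boundedness of $\N_L$ from Theorem~\ref{th nontangential 2}. I would first fix $f$ in a suitable dense subclass (for instance $f \in L^q(M) \cap L^\infty(M)$ with compact support) so that $Tf \in L^q(M)$, derive the a priori estimate $\|Tf\|_{L^p} \leq C\|f\|_{L^p}$, and conclude by density. Applying Lemma~\ref{lemma5.1} to $Tf$ together with the elementary decomposition
\begin{equation*}
\mu(\{|Tf|>K\lambda\}) \leq \mu(\{\mathcal{M}^\sharp(Tf)>\gamma\lambda\}) + \mu(\{|Tf|>K\lambda,\ \mathcal{M}^\sharp(Tf)\leq\gamma\lambda\}),
\end{equation*}
then multiplying by $p\lambda^{p-1}$, integrating in $\lambda>0$ and using the layer-cake formula (with a preliminary truncation $Tf \mapsto \min(|Tf|,\Lambda)\operatorname{sgn}(Tf)$ to keep all integrals finite) yields the core estimate
\begin{equation*}
K^{-p}\|Tf\|_{L^p}^p \leq \gamma^{-p}\|\mathcal{M}^\sharp(Tf)\|_{L^p}^p + C\gamma\|\N_L(Tf)\|_{L^p}^p.
\end{equation*}

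By Theorem~\ref{th nontangential 2}, $\|\N_L(Tf)\|_{L^p} \leq C_p\|Tf\|_{L^p}$ for every $1<p<\infty$, so choosing $\gamma$ small enough (depending on $p$ and $K$) one can absorb the $\N_L$-term into the left-hand side, producing the a priori bound
\begin{equation*}
\|Tf\|_{L^p} \leq C\|\mathcal{M}^\sharp(Tf)\|_{L^p}.
\end{equation*}
To control $\|\mathcal{M}^\sharp(Tf)\|_{L^p}$, I would combine the hypothesis $\|\mathcal{M}^\sharp(Tf)\|_{L^\infty} \leq \|f\|_{L^\infty}$ with a pointwise comparison derived from \eqref{sharp} and a covering argument (relating balls in $\mathcal{B}_0 \cup \mathcal{B}_1$ to cubes in $\mathcal{D}_1 \cup \mathcal{D}_2$, in the spirit of the proof of Theorem~\ref{main0}),
\begin{equation*}
\mathcal{M}^\sharp(Tf)(x) \leq C\bigl(\mathcal{M}(Tf)(x)+\N_L(Tf)(x)\bigr),
\end{equation*}
which together with the $L^q$-boundedness of $T$, Lemma~\ref{max} and Theorem~\ref{th nontangential 2} delivers $\|\mathcal{M}^\sharp(Tf)\|_{L^q} \leq C\|f\|_{L^q}$.

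Marcinkiewicz interpolation applied to the sublinear operator $f \mapsto \mathcal{M}^\sharp(Tf)$, which is simultaneously of strong types $(q,q)$ and $(\infty,\infty)$, therefore produces $\|\mathcal{M}^\sharp(Tf)\|_{L^p} \leq C\|f\|_{L^p}$ for every $q<p<\infty$; chaining this with the absorbed estimate above completes the argument. The main obstacle is the absorption step: since there is no a priori reason for $Tf$ to lie in $L^p(M)$ when $f$ is a generic element of $L^p(M)$, the double truncation (in the level-set parameter $\lambda$ and in $|Tf|$) must be arranged so that every quantity appearing in the good-$\lambda$ estimate is genuinely finite before the absorption is carried out, with monotone convergence then producing the full bound. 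The non-doubling geometry of $M$ enters the argument only through the already-established tools (Lemmas~\ref{lemma5.1} and~\ref{max}, Theorem~\ref{th nontangential 2}) and the covering comparison underlying the pointwise estimate for $\mathcal{M}^\sharp$.
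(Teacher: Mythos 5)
Your proposal follows essentially the same route as the paper's proof: the absorption step you describe (good-$\lambda$ inequality, layer-cake, $L^p$-boundedness of $\N_L$, choose $\gamma$ small) is exactly the paper's Lemma~\ref{lemma sharp}, and the paper likewise derives the pointwise bound $\mathcal M^\sharp g \lesssim \mathcal M g + \mathcal M(\N_L g)$ to obtain strong type $(q,q)$ for $f \mapsto \mathcal M^\sharp(Tf)$ and then interpolates against the $L^\infty$-to-$L^\infty$ hypothesis before concluding. Your explicit attention to the a priori finiteness needed for the absorption (truncating $Tf$ and working on a dense subclass) is a point the paper glosses over, but it does not change the substance of the argument.
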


\begin{lemma}\label{lemma sharp}
Let $1<p<\infty$. For every $f \in L^p(M)$, there exists
a constant $c_{p}$ which is independent on $f$ such that
$$ \|f\|_{L^p(M)} \leq c_p \|\mathcal M^\sharp f\|_{L^p(M)}. $$
\end{lemma}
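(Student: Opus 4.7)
The plan is to derive this sharp-maximal bound from the good-$\lambda$ inequality of Lemma \ref{lemma5.1} via the standard layer-cake / absorption argument, using that $\mathcal{N}_L$ is bounded on $L^p(M)$ for $1<p<\infty$ (Theorem \ref{th nontangential 2}).

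First, I would write, for any constants $K,\gamma>0$ to be chosen,
\begin{equation*}
\|f\|_{L^p(M)}^p = K^p \, p\int_0^\infty \lambda^{p-1}\,\mu\bigl(\{x\in M:|f(x)|>K\lambda\}\bigr)\,d\lambda,
\end{equation*}
and split the level set as
\begin{equation*}
\{|f|>K\lambda\} \subset \{|f|>K\lambda,\ \mathcal{M}^\sharp f\leq \gamma\lambda\} \cup \{\mathcal{M}^\sharp f>\gamma\lambda\}.
\end{equation*}
Applying Lemma \ref{lemma5.1} to the first piece and the trivial distributional bound to the second, then integrating in $\lambda$ and changing variables, I would arrive at
\begin{equation*}
\|f\|_{L^p(M)}^p \leq C\,K^p\gamma\,\|\mathcal{N}_L f\|_{L^p(M)}^p + (K/\gamma)^p\,\|\mathcal{M}^\sharp f\|_{L^p(M)}^p.
\end{equation*}

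Next, invoking Theorem \ref{th nontangential 2}, which gives $\|\mathcal{N}_L f\|_{L^p(M)} \leq C_p\|f\|_{L^p(M)}$ for $1<p<\infty$, the first term becomes $C\,K^p\gamma\,C_p^p\,\|f\|_{L^p(M)}^p$. Since $f\in L^p(M)$, the quantity $\|f\|_{L^p(M)}^p$ is finite, so I can choose $\gamma$ small enough (depending on $p$, $K$, and $C_p$) that $C\,K^p\gamma\,C_p^p \leq \tfrac{1}{2}$ and absorb this term into the left-hand side. With $K$ fixed from the good-$\lambda$ inequality and $\gamma$ chosen as above, one obtains $\|f\|_{L^p(M)}^p \leq 2(K/\gamma)^p\|\mathcal{M}^\sharp f\|_{L^p(M)}^p$, which yields the desired estimate with $c_p := 2^{1/p}K/\gamma$.

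The only real subtlety is the absorption step: it relies on the a priori finiteness of $\|f\|_{L^p(M)}$, which is built into the hypothesis $f\in L^p(M)$. If one wished to weaken this hypothesis (say, to locally integrable $f$), one would need a truncation argument—replacing $|f|$ by $\min(|f|,N)\chi_{B_R}$, proving the inequality uniformly in $N$ and $R$, and then passing to the limit by monotone convergence—but under the stated hypothesis no such device is needed. The remainder of the argument is the routine layer-cake manipulation, so I expect no further serious obstacles.
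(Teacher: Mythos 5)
Your proposal is correct and is essentially the same argument the paper uses: layer-cake decomposition of $\|f\|_{L^p}^p$, split the level set by the size of $\mathcal M^\sharp f$, apply the good-$\lambda$ inequality of Lemma \ref{lemma5.1} together with the $L^p$-boundedness of $\mathcal N_L$ from Theorem \ref{th nontangential 2}, and absorb the $\|f\|_{L^p}^p$ term by taking $\gamma$ small. Your explicit remark about the finiteness of $\|f\|_{L^p}$ being what licenses the absorption is a helpful clarification that the paper leaves implicit.
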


\begin{proof}

Note that
\begin{align*}
\|f\|^p_{L^p(M)} &= pN^p \int_0^\infty \lambda^{p-1} \mu(\{  x\in M: |f(x)|>N\lambda \})d\lambda \\
&\leq pN^p \int_0^\infty \lambda^{p-1} \mu(\{  x\in M: |f(x)|>N\lambda,\ \mathcal M^\sharp f(x) \leq \gamma\lambda \})d\lambda \\
&\quad+ pN^p \int_0^\infty \lambda^{p-1} \mu(\{  x\in M: \mathcal M^\sharp f(x) > \gamma\lambda \})d\lambda\\
&\leq pN^p \int_0^\infty \lambda^{p-1} \mu(\{  x\in M: |f(x)|>N\lambda,\ \mathcal M^\sharp f(x) \leq \gamma\lambda \})d\lambda \\
&\quad+\frac{pN^p}{\gamma^p}\| \mathcal M^\sharp f \|^p_{L^p(M)}.
\end{align*}
From good Lambda inequality,
\begin{align*}
\|f\|_{L^p(M)}^p
&\leq C pN^p \gamma\int_0^\infty \lambda^{p-1} \mu(\{  x\in M: \mathcal N_L f(x)>\lambda \})d\lambda \\
&+C \frac{pN^p}{\gamma^p}\| \mathcal M^\sharp f \|^p_{L^p(M)}\\
&\leq C\gamma\|\mathcal M_\Delta f\|^p_{L^p(M)}+\frac{pN^p}{\gamma^p}\| \mathcal M^\sharp f \|^p_{L^p(M)}\\
&\leq C\gamma\|f\|^p_{L^p(M)}+\frac{pN^p}{\gamma^p}\| \mathcal M^\sharp f \|^p_{L^p(M)}.
\end{align*}
Choose $\gamma=1/(2C)$, and we have
$$
\|f\|_{L^p(M)}^p\leq C\| \mathcal M^\sharp f \|^p_{L^p(M)}.
$$

The proof of Lemma \ref{lemma sharp} is complete.
\end{proof}

\begin{proof}[Proof of Theorem~\ref{th interpolation}]

We define a new sublinear operator
$$
T^\sharp f(x)=\mathcal M^\sharp Tf(x).
$$
According to the assumption, $T^\sharp$ is bounded from $L^\infty(M)$ to $L^\infty(M)$.
Then we show that $\mathcal  M^\sharp$ is bounded on $L^p$ for all $1<p<\infty$.
\begin{align*}
\mathcal M^\sharp T g(x)
&\leq \sup_{B\ni x}\frac{1}{\mu(B)}\int_B |g(y)-A_Bg(y)| {\,} d\mu(y)\\
&\leq \sup_{B\ni x} \frac{1}{\mu(B)}\int_B |g(y)| d\mu(y)+\sup_{B\ni x}\frac{1}{\mu(B)}\int_B |A_Bg(y)| {\,} d\mu(y)\\
&\leq \mathcal M g(x)+\sup_{B\ni x}\frac{1}{\mu(B)}\int_B \sup_{t>0}|A_tg(y)| {\,} d\mu(y)\\
&\leq \mathcal M g(x) +\mathcal M (\mathcal N_L g)(x).
\end{align*}
And from Lemma~\ref{max} and Theorem~\ref{th nontangential 2}, both $\mathcal M$ and $\mathcal N_L$ are bounded on $L^p$ for all $1<p<\infty$.
Also note that
$T$ is bounded on $L^q$ and then we have
\begin{eqnarray*}
\|T^\sharp f(x)\|_{L^q(M)}=\|\mathcal M^\sharp T f(x)\|_{L^q(M)}
\leq C\|T f\|_{L^q(M)}\leq C\|f\|_{L^q(M)}.
\end{eqnarray*}
Then by interpolation we have for all $q\leq p<\infty$
\begin{eqnarray*}
\|T^\sharp f\|_{L^p(M)}\leq C\|f\|_{L^p(M)}.
\end{eqnarray*}
Then from Lemma~\ref{lemma sharp}, we have
\begin{eqnarray*}
\|Tf\|_{L^p(M)}\leq \|\mathcal M^\sharp Tf\|_{L^p(M)}+\|\mathcal M^\sharp Tf\|_{L^p(M)}^2=\|T^\sharp f\|_{L^p(M)}+\|T^\sharp f\|_{L^p(M)}^2\leq C\|f\|_{L^p(M)}+C\|f\|_{L^p(M)}^2.
\end{eqnarray*}

The proof of Theorem~\ref{th interpolation} is complete.
\end{proof}

We now apply the sharp maximal function to obtain an interpolation
theorem for an analytic family of linear operators. Our assumptions are as
follows.

(a) Let $S$ denote the closed strip $0\leq Re z\leq 1 $ in the complex $z$-plane. There
exists some $1 < q < \infty$ such that $T_z$ is a family of uniformly bounded linear operator
on $L^q(M)$ i.e., there
is a $C$ such that
$$
\|T_z\|_{L^q(M)\to L^q(M)}\leq C \quad\quad \forall z\in S.
$$

(b) $T_z$ is a holomorphic function of $z$ in the sense that
$$
z\rightarrow \int T_z( f )(x)g(x)d(x)
$$
is continuous in $S$ and analytic in the interior of $S$ whenever $f\in L^q(M)$ and $g \in L^{q'}(M)$ with $1/q + 1/q' = 1$.

(c) There exists a constant $N>0$ so that
\begin{eqnarray}
\|T_{it}(f)\|_{L^q(M)}\leq N\|f\|_{L^q(M)}, \quad f\in  L^q(M) \cap L^\infty(M), \, -\infty<t<\infty
\end{eqnarray}
and
\begin{eqnarray}
\|T_{1+it}(f)\|_{\bmo}\leq N\|f\|_{L^\infty(M)}, \quad f\in L^q(M)\cap L^\infty(M), \, -\infty<t<\infty.
\end{eqnarray}

\begin{theorem}\label{Theorem-Interpolation}
Under the above assumptions (a) to (c), we can conclude that
$$
\|T_\theta(f)\|_{L^p(M)}\leq N_\theta\|f\|_{L^p(M)}, \quad f\in L^p(M)\cap L^q(M)
$$
whenever $0\leq \theta=1-q/p<1$ and $N_\theta$ depends only on $N$ and $\theta$, and not on $C$.
\end{theorem}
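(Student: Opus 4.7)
The plan is to extend the proof of Theorem \ref{th interpolation} to an analytic family by using Stein's complex interpolation. Concretely, by Lemma \ref{lemma sharp} it suffices to prove the $L^p$ bound
$$\|\mathcal{M}^\sharp T_\theta f\|_{L^p(M)} \leq C_\theta \|f\|_{L^p(M)}, \qquad f\in L^p(M)\cap L^q(M),$$
from which $\|T_\theta f\|_{L^p(M)} \leq N_\theta\|f\|_{L^p(M)}$ follows immediately.

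To obtain this inequality, I would linearize the sharp maximal function. For any measurable selection $x\mapsto B(x)$ of balls with $x\in B(x)$, and any measurable $\phi:M\times M\to \C$ satisfying $|\phi(x,\cdot)|\leq 1$ and $\operatorname{supp}\phi(x,\cdot)\subset B(x)$, set
$$\Lambda_z f(x) := \frac{1}{w(B(x))\,\mu(B(x))}\int_{B(x)} \bigl[T_z f(y) - A_{r_{B(x)}} T_z f(y)\bigr]\,\overline{\phi(x,y)}\,d\mu(y),$$
where $w(B)=1$ if $B\in \mathcal{B}_1$ and $w(B)=\log r_B$ if $B\in\mathcal{B}_0$. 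For each fixed choice of $(B(\cdot),\phi)$, the operator $\Lambda_z$ is linear in $f$; hypothesis (b) makes $z\mapsto \Lambda_z f$ analytic on $S$ since the selection data is independent of $z$; and dualizing the absolute value yields $\sup_{(B(\cdot),\phi)}|\Lambda_z f(x)|=\mathcal{M}^\sharp(T_z f)(x)$, together with the pointwise inequality $|\Lambda_z f(x)|\leq \mathcal{M}^\sharp(T_z f)(x)$ for every admissible choice.

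On the two edges of the strip, combining this pointwise bound with the $L^q$-boundedness of $\mathcal{M}^\sharp$ (via $\mathcal{M}^\sharp g\leq \mathcal{M} g+\mathcal{M}(\N_L g)$, Lemma \ref{max} and Theorem \ref{th nontangential 2}), and the identification $\|u\|_{\bmo}\simeq \|\mathcal{M}^\sharp u\|_{L^\infty}$ (Theorem \ref{main0} with $\rho=1$), one gets
\begin{align*}
\|\Lambda_{it}f\|_{L^q(M)} &\leq C\|\mathcal{M}^\sharp T_{it}f\|_{L^q(M)} \leq C\|T_{it}f\|_{L^q(M)}\leq CN\|f\|_{L^q(M)},\\
\|\Lambda_{1+it}f\|_{L^\infty(M)} &\leq \|\mathcal{M}^\sharp T_{1+it}f\|_{L^\infty(M)} \leq C\|T_{1+it}f\|_{\bmo}\leq CN\|f\|_{L^\infty(M)},
\end{align*}
uniformly in $t$ and in $(B(\cdot),\phi)$. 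Stein's analytic interpolation theorem applied to the linear analytic family $\{\Lambda_z\}$ now yields $\|\Lambda_\theta f\|_{L^p(M)}\leq C_\theta\|f\|_{L^p(M)}$ for $1/p=(1-\theta)/q$, i.e.\ $\theta=1-q/p$, with a constant independent of the selection. Taking the supremum over a countable dense collection of admissible $(B(\cdot),\phi)$ transfers the bound to $\mathcal{M}^\sharp T_\theta f$, and Lemma \ref{lemma sharp} finishes the proof.

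The delicate point is verifying that $\{\Lambda_z\}$ satisfies Stein's admissibility conditions, in particular the $e^{a|t|}$ growth of $\log|\langle\Lambda_z f,g\rangle|$ on vertical lines for simple $f,g$. This follows from the uniform $L^q$-bound on $T_z$ in hypothesis (a), which gives $|\langle\Lambda_z f,g\rangle|\lesssim \|f\|_{L^q}\|g\|_{L^{q'}}$ uniformly in $z\in S$. The measurable-selection step for exchanging the supremum over $(B(\cdot),\phi)$ with the $L^p$-norm is standard once one restricts to a countable dense family of balls together with simple sign-like $\phi$.
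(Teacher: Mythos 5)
Your proposal is correct and follows essentially the same route as the paper: linearize $\mathcal{M}^\sharp$ by fixing a $z$-independent measurable ball selection and a bounded test function, obtain a linear analytic family, verify the two endpoint bounds ($L^q\to L^q$ via $\mathcal{M}^\sharp g\lesssim\mathcal{M}g+\mathcal{M}(\mathcal{N}_Lg)$ and $L^\infty\to L^\infty$ via $\|\mathcal{M}^\sharp u\|_{L^\infty}\simeq\|u\|_{\bmo}$), apply Stein interpolation, take the sup over selections, and close with Lemma \ref{lemma sharp}. The only cosmetic difference is that the paper's $U^z$ sums two terms (one selection in $\mathcal{B}_1$ and one in $\mathcal{B}_0$) whereas you fold the two cases into a single weighted term $w(B)\in\{1,\log r_B\}$; these are equivalent since the two-term sum and the weighted single term are each comparable to $\mathcal{M}^\sharp$ after taking suprema.
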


\begin{proof}
The proof of this theorem follows closely the standard proof of Theorem
4 in [Stein, chap. 4] and Theorem 5.7 in [DY]. For completeness, we modify
the proof and sketch it here.

We fix a measurable function $x \mapsto B_x $ from points in $M$ to balls belonging to $\mathcal B_1$  with $x\in B_x$ and
a measurable function $x \mapsto B'_x $ from points in $M$ to balls belonging to $\mathcal B_0$ with $x\in B'_x$.
 We also fix two measurable function $\eta^1_x(y)$  and $\eta^2_x(y)$ with $|\eta^1_x(y)|,|\eta^2_x(y)|\leq 1$
 for $(x, y)\in M\times M$. Starting with an $f \in L^q$, we set $F_z = T_z( f )$ for $z$ in the
strip $S$ and write
$$
U^z(f)(x):=\frac{1}{\mu(B_x)}\int_{B_x} [F_z(y)-A_{B_x}F_z(y)]\eta^1_x(y)d\mu(y)+\frac{1}{\log r_{B'_x}\mu(B'_x)}\int_{B'_x} [F_z(y)-A_{B'_x}F_z(y)]\eta^2_x(y)d\mu(y).
$$
It is easy to check that
\begin{eqnarray}
|U^z(f)(x)|\leq 2M^\sharp (T_zf)(x) \quad\quad\mbox{and}\quad\quad 2\sup|U^z(f)(x)|\geq M^\sharp (T_zf)(x),
\end{eqnarray}
where the supremum is taken over all possible $B_x$ and $B'_x$ and functions $\eta^1_x$ and $\eta^2_x$ described
above.

The left proof  is quite similar to that of Theorem
4 in \cite[Chapter 4]{St2}   and \cite[Theorem 5.7]{DY1}. We omit it.
\end{proof}

\section{Boundedness of singular integrals from $L^\infty(M)$ to $\bmo$}
\setcounter{equation}{0}

While the regularised BMO spaces (introduced by Tolsa) can be defined for general non-homogeneous spaces which include the non-doubling manifolds with ends  $\mathbb R^m \sharp \Rf^n$, they are not the natural setting to study the end-point estimates for singular integrals with rough kernels such as singular integrals associated to an operator $L$ with generalised Poisson bounds when there are no further assumptions on the regularity of the kernels of
$e^{-tL}$. In this section, we will show that holomorphic functional calculus of $\sqrt{L}$ which includes the purely imaginary powers
$(\sqrt {L})^{it}$ with $t$ real, is bounded from $L^{\infty}(M)$ into our BMO space $\bmo$.

Concerning the definition of holomorphic functional calculus of operators, we refer the reader to \cite{Mc}. We will now prove Theorem \ref{main3}.

\begin{proof}[\bf Proof of Theorem \ref{main3}]
To begin with, we note that from Theorem \ref{main0}, we have that
$\bmo$ coincides with $\text{BMO}_L^\rho(M)$ for $0<\rho< n$. We now take $\rho=2$ since in our setting, $n\geq 3$.
Suppose $f\in L^\infty(M)$.

To verify that $\tilde  m(\sqrt{L})$ maps $L^\infty(M)$ to $\bmo$,
by definition, we only need to prove that there exists a positive constant $C$ such that
for all $B\in \mathcal{B}_0^2$,
\begin{align}\label{B0}
{1\over \log r_B\ \mu(B)} \int_B \big| \big( I-e^{-r_B\sqrt{L}}   \big)\tilde m(\sqrt{L})f(x) \big|\, d\mu(x)\leq C\|f\|_{L^\infty(M)}
\end{align}
and for all $B\in \mathcal{B}_1^2$,
\begin{align}\label{B1}
{1\over \mu(B)} \int_B \big| \big( I-e^{-r_B\sqrt{L}}   \big)\tilde m(\sqrt{L})f(x) \big|\, d\mu(x)\leq C\|f\|_{L^\infty(M)},
\end{align}
where $\mathcal{B}_0^2$ and $\mathcal{B}_1^2$ are defined as in \ref{b1} and \eqref{b2}, respectively.

We first consider \eqref{B1}. For every $B\in \mathcal{B}_1^2$ we consider the following two cases.

{\bf Case 1}: the center of $B$ is in $\R^m$.

To continue, we set
\begin{align}\label{f1f2}
f_1(x) = f(x)\cdot \chi_{\R^m\backslash K}, \quad f_2(x) = f(x)\cdot \chi_{\Rf^n}.
\end{align}
Then, to estimate \eqref{B1}, based on the upper bounds of the Poisson kernel, we need to estimate
\begin{align}\label{B11}
E:={1\over \mu(B)} \int_B \big| \big( I-e^{-r_B\sqrt{L}}   \big)\tilde m(\sqrt{L})f_1(x) \big|\, d\mu(x)
\end{align}
\noindent and
\begin{align}\label{B12}
F:= {1\over \mu(B)} \int_B \big| \big( I-e^{-r_B\sqrt{L}}   \big)\tilde m(\sqrt{L})f_2(x) \big|\, d\mu(x).
\end{align}

We first consider the term $E$. We write $f_1(x)= f_{11}(x)+f_{12}(x)$, where $f_{11}(x)=f_1(x)\cdot \chi_{4B}(x)$,
and $f_{12}(x)=f_1(x)\cdot \chi_{(4B)^c}(x)$. Note that $\mu(4B)\leq C\mu(B)$ in this case.
\begin{align*}
E&\leq {1\over \mu(B)} \int_B \big| \big( I-e^{-r_B\sqrt{L}}   \big)\tilde m(\sqrt{L})f_{11}(x) \big|\, d\mu(x)\\
&\quad+ {1\over \mu(B)} \int_B \big| \big( I-e^{-r_B\sqrt{L}}   \big)\tilde m(\sqrt{L})f_{12}(x) \big|\, d\mu(x)\\
&=: E_{11}+E_{12}.
\end{align*}
For the term  $E_{11}$, from the $L^2(M)$ boundedness of $\big( I-e^{-r_B\sqrt{L}}   \big)$ and $\tilde m(\sqrt{L})$ it is direct that
\begin{align*}
E_{11}
&\leq {1\over \mu(B)} \mu(B)^{1\over2}  \bigg( \int_B \big| \big( I-e^{-r_B\sqrt{L}}   \big)\tilde m(\sqrt{L})f_{11}(x) \big|^2\, d\mu(x)\bigg)^{1\over2}\\
&\leq C\mu(B)^{-{1\over2}} \|f_{11}\|_{L^2(M)}\\
&\leq C\|f\|_{L^\infty(M)}\mu(B)^{-{1\over2}} \mu(4B)^{1\over2} \\
&\leq C\|f\|_{L^\infty(M)},
\end{align*}
where the last inequality follows from the fact that $\mu(4B)\leq 4^m\mu(B)$, since the the center of $B$ is in $\R^m$.

We now consider the term $E_{12}$.

Note that
$$   I-e^{-r_B\sqrt{L}}  = \int_0^{r_B} -{d\over ds} e^{-s\sqrt{L}} {ds}  =\int_0^{r_B} s\sqrt{L} e^{-s\sqrt{L}}\, {ds\over s}. $$
Then we have
\begin{align}\label{eeee12}
&\big( I-e^{-r_B\sqrt{L}}   \big)\tilde m(\sqrt{L})f_{12}(x)\\
&= \int_0^{r_B} s\sqrt{L} e^{-s\sqrt{L}} {ds\over s} \int_0^\infty t\sqrt L \exp(-t\sqrt L) f_{12}(x) m(t)  {dt\over t}\nonumber\\
&=  \int_0^{r_B}\int_0^\infty {st \over (s+t)^2} \big( (s+t)\sqrt{L}\big)^2\exp(-(s+t)\sqrt L) f_{12}(x) m(t) \, {dt\over t}{ds\over s}\nonumber\\
&=  \int_0^{r_B}\int_0^\infty {st \over (s+t)^2} \int_M p_{t+s,2}(x,y) f_{12}(y) d\mu(y)\, m(t) \, {dt\over t}{ds\over s}\nonumber\\
&=  \int_0^{r_B}\int_0^\infty {st \over (s+t)^2} \int_{\R^m\cap (4B)^c} p_{t+s,2}(x,y) f(y) d\mu(y)\, m(t) \, {dt\over t}{ds\over s},\nonumber
\end{align}
where$p_{t+s,2}(x,y)$ is the   kernel  of the operator $\big( (s+t)\sqrt{L}\big)^2\exp(-(s+t)\sqrt L)$.

Next, we denote by $x_B$ and $r_B$ the center and the radius of $B$, respectively. Then, to estimate $E_{12}$, we consider the following cases: $d(x_B,K)\leq 2r_B$ and $d(x_B,K)>2r_B$.

Case (i): $d(x_B,K)\leq 2r_B$.

Note that in this case, when $y\in \R^m\cap (4B)^c$ and $x\in B$ is, from estimates 2, 4 and 5 in Proposition~\ref{prop2.1}, we obtain that the kernel $p_{t+s,2}(x,y)$  satisfies
\begin{align*}
|p_{t+s,2}(x,y)|\leq   C{(t+s)^2\over (t+s+d(x,y))^{m+2}}+ C{(t+s)^2\over |y|^{m-2} (t+s+|y|)^{n+2}}.
\end{align*}
Thus, from \eqref{eeee12} and the upper bound of $p_{t+s,2}(x,y)$ as above, we have
\begin{align*}
E_{12}
&\leq  {C\over \mu(B)} \int_B \int_0^{r_B}\int_0^\infty {st \over (s+t)^2} \int_{\R^m\cap (4B)^c}  {(t+s)^2\over (t+s+d(x,y))^{m+2}}\ |f(y)|\,d\mu(y)\, {dt\over t}{ds\over s} \, d\mu(x)\\
&\quad+ {C\over \mu(B)} \int_B \int_0^{r_B}\int_0^\infty {st \over (s+t)^2} \int_{\R^m\cap (4B)^c} {(t+s)^2\over |y|^{m-2} (t+s+|y|)^{n+2}}\ |f(y)|\,d\mu(y)\, {dt\over t}{ds\over s} \, d\mu(x)\\
&=E_{121}+E_{122}.
\end{align*}
We first estimate $E_{121}$. Note that
\begin{align*}
E_{121}&\leq  {C\|f\|_{L^\infty(M)}\over \mu(B)} \int_B \int_0^{r_B}\int_0^{r_B} {st \over (s+t)^2} \int_{\R^m\cap (4B)^c}  {(t+s)^2\over (t+s+d(x,y))^{m+2}}\,d\mu(y)\, {dt\over t}{ds\over s} \, d\mu(x)\\
&\quad+  {C\|f\|_{L^\infty(M)}\over \mu(B)} \int_B \int_0^{r_B}\int_{r_B}^\infty {st \over (s+t)^2} \int_{\R^m\cap (4B)^c}  {(t+s)^2\over (t+s+d(x,y))^{m+2}}\,d\mu(y)\, {dt\over t}{ds\over s} \, d\mu(x)\\
&=E_{1211}+E_{1212}.
\end{align*}
For the term $E_{1212}$, it is clear that from the fact that
\begin{align*}
\int_{\R^m\cap (4B)^c}  {(t+s)^2\over (t+s+d(x,y))^{m+2}}\,d\mu(y)
\leq C,
\end{align*}
we  have
\begin{align*}
E_{1212}
&\leq C{\|f\|_{L^\infty(M)}\over \mu(B)} \int_B \int_0^{r_B}\int_{r_B}^\infty {st \over (s+t)^2} \, {dt\over t}{ds\over s} \, d\mu(x)\leq C\|f\|_{L^\infty(M)}.
\end{align*}
For the term $E_{1211}$, we have that
\begin{align*}
E_{1211}&\leq  {C\|f\|_{L^\infty(M)}\over \mu(B)} \int_B \int_0^{r_B}\int_0^{r_B}  \int_{\R^m\cap (4B)^c}  {1\over d(x,y)^{m+2}}\,d\mu(y)\, {dt}{ds} \, d\mu(x)\\
&\leq  {C\|f\|_{L^\infty(M)}\over \mu(B)} \cdot r_B^2\cdot \int_B  \sum_{j=2}^\infty \int_{y\in \R^m\cap (4B)^c: 2^jr_B<d(x,y)\leq 2^{j+1}r_B}  {1\over d(x,y)^{m+2}}\,d\mu(y) \, d\mu(x)\\
&\leq  {C\|f\|_{L^\infty(M)}\over \mu(B)} \mu(B)  \cdot r_B^2\cdot\sum_{j=2}^\infty (2^{j+1}r_B)^m  {1\over (2^jr_B)^{m+2}}\\
&\leq C\|f\|_{L^\infty(M)}.
\end{align*}

Next we consider $E_{122}$. Again we write
\begin{align*}
E_{122}&\leq {C\|f\|_{L^\infty(M)}\over \mu(B)} \int_B \int_0^{r_B}\int_0^{r_B} {st \over (s+t)^2} \int_{\R^m\cap (4B)^c} {(t+s)^2\over |y|^{m-2} (t+s+|y|)^{n+2}}\,d\mu(y)\, {dt\over t}{ds\over s} \, d\mu(x)\\
&\quad+ {C\|f\|_{L^\infty(M)}\over \mu(B)} \int_B \int_0^{r_B}\int_{r_B}^\infty {st \over (s+t)^2} \int_{\R^m\cap (4B)^c} {(t+s)^2\over |y|^{m-2} (t+s+|y|)^{n+2}}\,d\mu(y)\, {dt\over t}{ds\over s} \, d\mu(x)\\
&=E_{1221}+E_{1222}.
\end{align*}
We first consider the term $E_{1222}$.
We claim that there exists a positive constant $C$ such that for all $s,t\in (0,\infty)$,
\begin{align}\label{eeee1222}
 \int_{\R^m\cap (4B)^c} {(t+s)^2\over |y|^{m-2} (t+s+|y|)^{n+2}}\,d\mu(y)\leq C.
\end{align}
In fact, note that
\begin{align*}
 &\int_{\R^m\cap (4B)^c} {(t+s)^2\over |y|^{m-2} (t+s+|y|)^{n+2}}\,d\mu(y)\\
 &\leq\int_{K} {(t+s)^2\over |y|^{m-2} (t+s+|y|)^{n+2}}\,d\mu(y) + \int_{\R^m\cap K} {(t+s)^2\over |y|^{m-2} (t+s+|y|)^{n+2}}\,d\mu(y)\\
 &\leq\int_{K} {1\over |y|^{m-2} |y|^{n}}\,d\mu(y) + \int_{\R^m\cap K} {1\over |y|^{m-2} |y|^{n}}\,d\mu(y)\\
 &\leq \mu(K) + C\int_1^\infty {r^{m-1}\over  r^{m+n-2} } dr\\
 &\leq C,
 \end{align*}
where the third inequality follows from the fact that $|y|\geq1$ and from the changing of the integration on $\R^m\cap K$
 into polar coordinates. Thus, we obtain that \eqref{eeee1222} holds.
Then we further have
\begin{align*}
E_{1222}&\leq C {\|f\|_{L^\infty(M)}\over \mu(B)} \int_B \int_0^{r_B}\int_{r_B}^\infty {st \over (s+t)^2} \, {dt\over t}{ds\over s} \, d\mu(x)\leq C\|f\|_{L^\infty(M)}.
\end{align*}

We now estimate the term $E_{1221}$.
We claim that there exists a positive constant $C$ such that,
\begin{align}\label{eeee1221}
 \int_{\R^m\cap (4B)^c} {1\over |y|^{m+n} }\,d\mu(y)\leq {C\over r_B^2}.
\end{align}
In fact, note that from the condition $d(x_B,K)\leq 2r_B$ in Case (i), for every $y\in \R^m\backslash (4B)^c$, we have $|y|\geq r_B$.
\begin{align*}
 \int_{\R^m\cap (4B)^c} {1\over |y|^{m+n}}\,d\mu(y)
 &\leq \int_{\{y\in \R^m: |y|\geq r_B \} }  {1\over |y|^{m+2} }\,d\mu(y)\\
 &\leq C\int_{r_B}^\infty {r^{m-1}\over  r^{m+2} } dr\\
 &\leq {C\over r_B^2},
 \end{align*}
where the first inequality follows from the fact that $|y|\geq1$. As a consequence, we obtain that \eqref{eeee1221} holds.
Then from \eqref{eeee1221} we have
\begin{align*}
E_{1221}&\leq {\|f\|_{L^\infty(M)}\over \mu(B)} \int_B \int_0^{r_B}\int_0^{r_B}  \int_{\R^m\cap (4B)^c} {1\over |y|^{m-2} |y|^{n+2}}\,d\mu(y)\, {dt}{ds} \, d\mu(x)\\
&\leq {C\|f\|_{L^\infty(M)}\over \mu(B)} \mu(B) \int_0^{r_B}\int_0^{r_B}  \int_{\R^m\cap (4B)^c} {1\over  |y|^{m+n}}\,d\mu(y)\, {dt}{ds} \\
&\leq C\|f\|_{L^\infty(M)}.
\end{align*}

Case (ii): $d(x_B,K)> 2r_B$.

Note that in this case, the ball $B$ is contained in $\R^m\backslash K$. Hence, from estimate 5 in Proposition~\ref{prop2.1}, we obtain that the kernel $p_{t+s,2}(x,y)$  satisfies
\begin{align*}
|p_{t+s,2}(x,y)|\leq   C{(t+s)^2\over (t+s+d(x,y))^{m+2}}+ C{(t+s)^2\over |x|^{m-2} |y|^{m-2} (t+s+|x|+|y|)^{n+2}}.
\end{align*}
Thus, from \eqref{eeee12} and the upper bound of $p_{t+s,2}(x,y)$ as above, we have
\begin{align*}
E_{12}
&\leq  {C\over \mu(B)} \int_B \int_0^{r_B}\int_0^\infty {st \over (s+t)^2} \int_{\R^m\cap (4B)^c}  {(t+s)^2\over (t+s+d(x,y))^{m+2}}\ |f(y)|\,d\mu(y)\, {dt\over t}{ds\over s} \, d\mu(x)\\
&\quad+ {C\over \mu(B)} \int_B \int_0^{r_B}\int_0^\infty {st \over (s+t)^2} \int_{\R^m\cap (4B)^c} {(t+s)^2\over |x|^{m-2} |y|^{m-2} (t+s+|x|+|y|)^{n+2}}\ |f(y)|\,d\mu(y)\, {dt\over t}{ds\over s} \, d\mu(x)\\
&=E_{121}+\widetilde E_{122},
\end{align*}
where the term $E_{121}$ is exactly the same as that in Case (i).  As a consequence, we only need to verify the term
$\widetilde E_{122}$ in this case.

We now write $\widetilde E_{122}$ as
\begin{align*}
&\widetilde E_{122}\\
&\leq {C\|f\|_{L^\infty(M)}\over \mu(B)} \int_B \int_0^{r_B}\int_0^{r_B} {st \over (s+t)^2} \int_{\R^m\cap (4B)^c} {(t+s)^2\over |x|^{m-2} |y|^{m-2} (t+s+|x|+|y|)^{n+2}}\,d\mu(y)\, {dt\over t}{ds\over s} \, d\mu(x)\\
&\quad+ {C\|f\|_{L^\infty(M)}\over \mu(B)} \int_B \int_0^{r_B}\int_{r_B}^\infty {st \over (s+t)^2} \int_{\R^m\cap (4B)^c} {(t+s)^2\over |x|^{m-2} |y|^{m-2} (t+s+|x|+|y|)^{n+2}}\,d\mu(y)\, {dt\over t}{ds\over s} \, d\mu(x)\\
&=\widetilde E_{1221}+ \widetilde E_{1222}.
\end{align*}
For the term $\widetilde E_{1222}$, from the fact that $|x|\geq1$, it is direct to see that
$\widetilde E_{1222}$ is controlled by the term $ E_{1222}$ as in Case (i) above, and hence it is bounded by
$C\|f\|_{L^\infty(M)}$.

Now it suffices to verify the term $\widetilde E_{1221}$. To continue, we now claim that
\begin{align}\label{eeee1221tilde}
 \int_{\R^m\cap (4B)^c} {1\over |x|^{m-2}|y|^{m-2} } {1\over (|x|+|y|)^{n+2}}\,d\mu(y)\leq {C\over r_B^2}.
\end{align}
To see this, we first point out that since $d(x_B,K)> 2r_B$, it is direct  that for every $x\in B$,
we have $|x|\geq r_B$. Then, if $r_B\geq1$, we have
\begin{align*}
&\int_{\R^m\cap (4B)^c} {1\over |x|^{m-2}|y|^{m-2} } {1\over (|x|+|y|)^{n+2}}\,d\mu(y)\\
 &\leq \int_{\{ y\in\R^m: |y|\leq 8r_B\}} {1\over r_B^{m-2}|y|^{m-2} } {1\over (r_B+|y|)^{n+2}}\,d\mu(y)\\
 &\quad +\int_{\{ y\in\R^m: |y|> 8r_B\}} {1\over r_B^{m-2}|y|^{m-2} } {1\over (r_B+|y|)^{n+2}}\,d\mu(y)\\
 &\leq \int_{\{ y\in\R^m: |y|\leq 8r_B\}} {1\over r_B^{m+n}} \,d\mu(y)+ {1\over r_B^{m-2}} \int_{\{ y\in\R^m: |y|> 8r_B\}}{1\over |y|^{m+n}} \,d\mu(y)\\
 &\leq  C{r_B^m\over r_B^{m+n}} + {C\over r_B^{m-2}} \int_{ 8r_B}^\infty {r^{m-1}\over r^{m+n}} \,dr\\
 &\leq {C\over r_B^2}.
 \end{align*}
 If $r_B<1$, then from the fact that $|x|\geq1$, we have
\begin{align*}
\int_{\R^m\cap (4B)^c} {1\over |x|^{m-2}|y|^{m-2} } {1\over (|x|+|y|)^{n+2}}\,d\mu(y)
 &\leq \int_{\{ y\in\R^m: |y|\geq 1\}} {1\over |y|^{m-2} } {1\over |y|^{n+2}}\,d\mu(y)\leq C\leq {C\over r_B^2}.
 \end{align*}
 Combining these estimates we see that the claim \eqref{eeee1221tilde} holds. As a consequence, we obtain that
 \begin{align*}
\widetilde E_{1221}
&\leq {C\|f\|_{L^\infty(M)}\over \mu(B)} \int_B \int_0^{r_B}\int_0^{r_B}  \int_{\R^m\cap (4B)^c} {1\over |x|^{m-2} |y|^{m-2} (|x|+|y|)^{n+2}}\,d\mu(y)\, {dt}{ds} \, d\mu(x)\\
&\leq {C\|f\|_{L^\infty(M)}\over \mu(B)} \int_B \int_0^{r_B}\int_0^{r_B} {1\over r_B^2} \, {dt}{ds} \, d\mu(x)\\
&\leq C\|f\|_{L^\infty(M)}.
\end{align*}

As a consequence of all the subcases for the term $E$ above, we obtain that  $$E \leq C\|f\|_{L^\infty(M)}.$$ We now turn to the term $F$.

We write $f_2(x)= f_{21}(x)+f_{22}(x)$, where $f_{21}(x)=f_2(x)\cdot \chi_{4B}(x)$,
and $f_{22}(x)=f_2(x)\cdot \chi_{(4B)^c}(x)$. Here we point out that if $4B\cap \Rf^n=\emptyset$ then we have $f_{21}=0$ and hence $f_{22}=f_2$. Note that $\mu(4B)\leq C\mu(B)$ in this case.
\begin{align*}
F&\leq {1\over \mu(B)} \int_B \big| \big( I-e^{-r_B\sqrt{L}}   \big)\tilde m(\sqrt{L})f_{21}(x) \big|\, d\mu(x)\\
&\quad+ {1\over \mu(B)} \int_B \big| \big( I-e^{-r_B\sqrt{L}}   \big)\tilde m(\sqrt{L})f_{22}(x) \big|\, d\mu(x)\\
&=: F_{21}+F_{22}.
\end{align*}
For the term  $F_{21}$, from the $L^2(M)$ boundedness of $\big( I-e^{-r_B\sqrt{L}}   \big)$ and $\tilde m(\sqrt{L})$ it is direct that
\begin{align*}
F_{21}
&\leq {1\over \mu(B)} \mu(B)^{1\over2}  \bigg( \int_B \big| \big( I-e^{-r_B\sqrt{L}}   \big)\tilde m(\sqrt{L})f_{21}(x) \big|^2\, d\mu(x)\bigg)^{1\over2}\\
&\leq C\mu(B)^{-{1\over2}} \|f_{21}\|_{L^2(M)}\\
&\leq C\|f\|_{L^\infty(M)}\mu(B)^{-{1\over2}} \mu(4B)^{1\over2} \\
&\leq C\|f\|_{L^\infty(M)},
\end{align*}
where the last inequality follows from the fact that $\mu(4B)\leq 4^m\mu(B)$, since the the center of $B$ is in $\R^m$.

We now consider the term $F_{22}$.
Again, we have
\begin{align*}
\big( I-e^{-r_B\sqrt{L}}   \big)\tilde m(\sqrt{L})f_{22}(x)=  \int_0^{r_B}\int_0^\infty {st \over (s+t)^2} \big( (s+t)\sqrt{L}\big)^2\exp(-(s+t)\sqrt L) f_{22}(x) m(t) \, {dt\over t}{ds\over s}.
\end{align*}

Next, note that when $y\in \Rf^n\cap (4B)^c$ no mater where $x$ is, the  kernel $p_{t+s,2}(x,y)$ of the operator $\big( (s+t)\sqrt{L}\big)^2\exp(-(s+t)\sqrt L)$ satisfies
\begin{align*}
|p_{t+s,2}(x,y)|\leq   {(t+s)^2\over (t+s+d(x,y))^{m+2}}+ {(t+s)^2\over  (t+s+d(x,y))^{n+2}}.
\end{align*}
Thus we have
\begin{align*}
F_{22}&\leq {1\over \mu(B)} \int_B \int_0^{r_B}\int_0^\infty {st \over (s+t)^2} \int_M |p_{t+s,2}(x,y)|\ |f_{22}(y)|\,d\mu(y)\, {dt\over t}{ds\over s} \, d\mu(x)\\
&\leq  {1\over \mu(B)} \int_B \int_0^{r_B}\int_0^\infty {st \over (s+t)^2} \int_{\Rf^n\cap (4B)^c}  {(t+s)^2\over (t+s+d(x,y))^{m+2}}\ |f(y)|\,d\mu(y)\, {dt\over t}{ds\over s} \, d\mu(x)\\
&\quad+ {1\over \mu(B)} \int_B \int_0^{r_B}\int_0^\infty {st \over (s+t)^2} \int_{\Rf^n\cap (4B)^c} {(t+s)^2\over  (t+s+d(x,y))^{n+2}}\ |f(y)|\,d\mu(y)\, {dt\over t}{ds\over s} \, d\mu(x)\\
&=F_{221}+F_{222}.
\end{align*}
We first consider the term $F_{221}$.  One can write
\begin{align*}
F_{221}
&\leq  {\|f\|_{L^\infty(M)}\over \mu(B)} \int_B \int_0^{r_B}\int_0^{r_B} {st \over (s+t)^2} \int_{\Rf^n\cap (4B)^c}  {(t+s)^2\over (t+s+d(x,y))^{m+2}}\,d\mu(y)\, {dt\over t}{ds\over s} \, d\mu(x)\\
&\quad+  {\|f\|_{L^\infty(M)}\over \mu(B)} \int_B \int_0^{r_B}\int_{r_B}^\infty {st \over (s+t)^2} \int_{\Rf^n\cap (4B)^c}  {(t+s)^2\over (t+s+d(x,y))^{m+2}}\,d\mu(y)\, {dt\over t}{ds\over s} \, d\mu(x)\\
&=F_{2211}+F_{2212}.
\end{align*}

We first consider $F_{2212}$.
We note that
\begin{align}\label{claim eeee}
 &\int_{\Rf^n\cap (4B)^c}  {(t+s)^2\over (t+s+d(x,y))^{m+2}}\,d\mu(y)\\
 &\leq \Bigg\{\int_{\{y\in\Rf^n\cap (4B)^c:\ d(x,y)\leq (t+s)\}}+\sum_{j=0}^\infty\int_{\{y\in\Rf^n\cap (4B)^c:\ 2^j(t+s)< d(x,y)\leq 2^{j+1}(t+s)\}}\Bigg\}  {(t+s)^2\over (t+s+d(x,y))^{m+2}}\,d\mu(y).\nonumber
\end{align}
 Then we have that
\begin{align*}
 &\int_{\Rf^n\cap (4B)^c}  {(t+s)^2\over (t+s+d(x,y))^{m+2}}\,d\mu(y)\\
 &\leq {\mu(B(x,t+s))\over  (t+s)^m}+ \sum_{j=0}^\infty    \mu(B(x,2^{j+1}(t+s))) {(t+s)^2\over ( 2^j(t+s))^{m+2}}\\
  &\leq C,
\end{align*}
where  in the second inequality above we have used the fact that  $\mu(B(x,r))\leq C r^m$ for any $r>0$.
Therefore, we obtain that
\begin{align*}
F_{2212}&\leq  {C\|f\|_{L^\infty(M)}\over \mu(B)} \int_B \int_0^{r_B}\int_{r_B}^\infty {st \over (s+t)^2} \, {dt\over t}{ds\over s} \, d\mu(x)\leq C\|f\|_{L^\infty(M)}.
\end{align*}

We now consider $F_{2211}$.
\begin{align*}
F_{2211}
&\leq  {\|f\|_{L^\infty(M)}\over \mu(B)} \int_B \int_0^{r_B}\int_0^{r_B}  \int_{\Rf^n\cap (4B)^c}  {1\over d(x,y)^{m+2}}\,d\mu(y)\, {dt}{ds} \, d\mu(x).
\end{align*}
By using the same approach as in the estimate of \eqref{claim eeee}, we obtain that
$$\int_{\Rf^n\cap (4B)^c}  {1\over d(x,y)^{m+2}}\,d\mu(y)\leq {C\over r_B^2}.$$
As a consequence we obtain that
\begin{align*}
F_{2211}
&\leq  C{\|f\|_{L^\infty(M)}\over \mu(B)} \cdot{1\over r_B^2}\cdot \int_B \int_0^{r_B}\int_0^{r_B}   {dt}{ds} \, d\mu(x)\leq C\|f\|_{L^\infty(M)}.
\end{align*}

As for the term $F_{222}$, by repeating the estimates for the term $E_{121}$, we obtain that
\begin{align*}
F_{222}
\leq  C\|f\|_{L^\infty(M)}.
\end{align*}

Combining the estimates of $E$ and $F$ we obtain that \eqref{B1} holds in {\bf Case 1}.

\medskip

{\bf Case 2:}  the center of $B$ is in $\Rf^n$.

\smallskip

If $B\cap (\R^m\backslash K) = \emptyset$, then by using the same approach and similar estimates as in
 {\bf Case 1}, we can obtain that \eqref{B1} holds. Thus, it suffices to consider $B\in\mathcal B_1^2$ and
$B\cap (\R^m\backslash K) \not= \emptyset$.

In this case, we decompose $B$ as follows. Let
\begin{align}\label{B decom}
B:=\bigg(\bigcup_{K_0<k\leq 0} T_k\bigg)\, \bigcup B^m,
\end{align}
where
\begin{align*}
&T_k:=\{x\in \Rf^n\cap B: 2^{k}r_B<|x|\leq 2^{k+1}r_B\},\quad\quad k\leq -1\\
&T_0:=B\cap\Rf^n\backslash \bigcup_{K_0<k< 0} T_k\\
&B^m:=B\cap \R^m
\end{align*}
and $K_0$ is the number such that $2^{K_0}r_B\approx1$.

Let $E$ be the same as the term in \eqref{B11}.
We write
\begin{align*}
E&\leq{1\over \mu(B)} \int_{B^m} \big| \big( I-e^{-r_B\sqrt{L}}   \big)\tilde m(\sqrt{L})f_1(x) \big|\, d\mu(x)\\
&\quad+ \sum_{K_0<k\leq 0}{1\over \mu(B)} \int_{T_k} \big| \big( I-e^{-r_B\sqrt{L}}   \big)\tilde m(\sqrt{L})f_1(x) \big|\, d\mu(x)\\
&=E_1+E_2.
\end{align*}

We first estimate  $E_1$. Denote the radius of $B^m$ by $r_{B^m}$.  Similar to {\bf Case 1},
we  write the function $f_1 = f_{11}+f_{12}$, where
$f_{11}(x)=f_1(x)\cdot \chi_{4B^m}(x)$,
and $f_{12}(x)=f_1(x)\cdot \chi_{(4B^m)^c}(x)$. Note that $\mu(4B^m)\leq C\mu(B^m) \leq C\mu(B)$ in this case.

we can split the term $E_{1}$ into $ E_{11}$ and $ E_{12}$. Then the term $E_{11}$ can be handled
following the same approach  for the term $E_{11}$ in {\bf Case 1}. Hence we get that
$E_{11}\leq C\|f\|_{L^\infty(M)}$.

Now it suffices to consider $E_{12}$. We first point out that for the ball $B^m$, we can consider that its center $x_{B^m}$ is
in $K$. That is, we have $d(x_{B^m},K)\leq 2r_B$.
Then we write
\begin{align*}
E_{12}
&={1\over \mu(B)} \int_{B^m} \big| \big( I-e^{-r_{B^m}\sqrt{L}}   \big)\tilde m(\sqrt{L})f_{12}(x) \big|\, d\mu(x)
\\
&\quad\quad+{1\over \mu(B)} \int_{B^m} \big| \big(  e^{-r_{B^m}\sqrt{L}} -e^{-r_B\sqrt{L}}   \big)\tilde m(\sqrt{L})f_{12}(x) \big|\, d\mu(x)\nonumber\\
&=:\overline E_{12}+\overline{\overline E}_{12}.\nonumber
\end{align*}
For the term $\overline E_{12}$, note that the scale in the semigroup in the integrand is exactly the radius of the ball $B^m$, hence, by
using the same approach as that in the estimate of Case (i) for term $E_{12}$ in {\bf Case 1} and by   the fact that $\mu(B^m)\leq \mu(B)$, we obtain that
$$
\overline E_{12}\leq C\|f\|_{L^\infty(M)}.
$$

We now consider the term $\overline {\overline E}_{12}$.

We note that
$$   e^{-r_{B^m}\sqrt{L}}-e^{-r_B\sqrt{L}}  = \int_{r^{B^m}}^{r_B} -{d\over ds} e^{-s\sqrt{L}} {ds}  =\int_{r^{B^m}}^{r_B} s\sqrt{L} e^{-s\sqrt{L}} {ds\over s}. $$
Then we have
\begin{align*}
&\big( e^{-r_{B^m}\sqrt{L}}-e^{-r_B\sqrt{L}}   \big)\tilde m(\sqrt{L})f_{12}(x)\\
&= \int_{r_{B^m}}^{r_B} s\sqrt{L} e^{-s\sqrt{L}} {ds\over s} \int_0^\infty t\sqrt L \exp(-t\sqrt L) f_{12}(x) m(t) \, {dt\over t}\\
&=  \int_{r_{B^m}}^{r_B}\int_0^\infty {st \over (s+t)^2} \big( (s+t)\sqrt{L}\big)^2\exp(-(s+t)\sqrt L) f_{12}(x) m(t) \, {dt\over t}{ds\over s}.
\end{align*}
Hence, we further have
\begin{align*}
\overline {\overline E}_{12}
&\leq  {C\over \mu(B)} \int_{B^m} \int_{r_{B^m}}^{r_B}\int_0^\infty {st \over (s+t)^2} \int_{\R^m\cap (4B^m)^c}  {(t+s)^2\over (t+s+d(x,y))^{m+2}}\ |f(y)|\,d\mu(y)\, {dt\over t}{ds\over s} \, d\mu(x)\\
&\quad+ {C\over \mu(B)} \int_{B^m} \int_{r_{B^m}}^{r_B}\int_0^\infty {st \over (s+t)^2} \int_{\R^m\cap (4B^m)^c} {(t+s)^2\over |y|^{m-2} (t+s+|y|)^{n+2}}\ |f(y)|\,d\mu(y)\, {dt\over t}{ds\over s} \, d\mu(x)\\
&=\overline {\overline E}_{121}+\overline {\overline E}_{122}.
\end{align*}
We now further split $\overline {\overline E}_{121}$ as
\begin{align*}
\overline {\overline E}_{121}
&\leq  {C\over \mu(B)} \int_{B^m} \int_{r_{B^m}}^{r_B}\int_0^{r_B} {st \over (s+t)^2} \int_{\R^m\cap (4B^m)^c}  {(t+s)^2\over (t+s+d(x,y))^{m+2}}\ |f(y)|\,d\mu(y)\, {dt\over t}{ds\over s} \, d\mu(x)\\
&\quad+ {C\over \mu(B)} \int_{B^m} \int_{r_{B^m}}^{r_B}\int_{r_B}^\infty {st \over (s+t)^2} \int_{\R^m\cap (4B^m)^c} {(t+s)^2\over (t+s+d(x,y))^{m+2}}\ |f(y)|\,d\mu(y)\, {dt\over t}{ds\over s} \, d\mu(x)\\
&=\overline {\overline E}_{1211}+\overline {\overline E}_{1212}.
\end{align*}
For the term $\overline {\overline E}_{1212}$, following the same approach as that  in Case (i)
for the term $E_{1212}$ in {\bf Case 1}, we obtain that
\begin{align*}
\overline {\overline E}_{1212}
&\leq   {C\|f\|_{L^\infty(M)}\over \mu(B)} \int_{B^m} \int_{r_{B^m}}^{r_B}\int_{r_B}^\infty {st \over (s+t)^2} \, {dt\over t}{ds\over s} \, d\mu(x)
\leq C\|f\|_{L^\infty(M)}{\mu(B^m)\over \mu(B)} {r_B-r_{B^m}\over r_B} \leq C\|f\|_{L^\infty(M)}.
\end{align*}
For the term $\overline {\overline E}_{1211}$, following the same approach as that  in Case (i)
for the term $E_{1211}$ in {\bf Case 1}, we obtain that
\begin{align*}
\overline {\overline E}_{1211}
&\leq     {C\|f\|_{L^\infty(M)}\over \mu(B)} \cdot \int_{r_{B^m}}^{r_B}\int_0^{r_B} dtds\cdot \int_{B^m}  \sum_{j=2}^\infty \int_{y\in \R^m\cap (4B^m)^c: 2^jr_{B^m}<d(x,y)\leq 2^{j+1}r_{B^m}}  {1\over d(x,y)^{m+2}}\,d\mu(y) \, d\mu(x)\\
&\leq     {C\|f\|_{L^\infty(M)}\over \mu(B)} \mu(B^m) {r^2_B\over r_{B^m}^2} \leq  {C\|f\|_{L^\infty(M)}\over \mu(B)} r^{m-2}_{B^m} r_B^2 \leq {C\|f\|_{L^\infty(M)}\over \mu(B)} r_B^n\\
& \leq C\|f\|_{L^\infty(M)},
\end{align*}
where the fourth inequality follows from the condition that $B\in\mathcal B_1^2$.

Now, following the same approach as that in  the estimate of Case (i) for term $E_{122}$ in {\bf Case 1} and using
the condition that $B\in\mathcal B_1^2$, we can also obtain that
\begin{align*}
\overline {\overline E}_{122}
&\leq   {C\over \mu(B)} \int_{B^m} \int_{r_{B^m}}^{r_B}\int_0^{r_B} {st \over (s+t)^2} \int_{\R^m\cap (4B^m)^c} {(t+s)^2\over |y|^{m-2} (t+s+|y|)^{n+2}}\ |f(y)|\,d\mu(y)\, {dt\over t}{ds\over s} \, d\mu(x)\\
&\quad +   {C\over \mu(B)} \int_{B^m} \int_{r_{B^m}}^{r_B}\int_{r_B}^\infty {st \over (s+t)^2} \int_{\R^m\cap (4B^m)^c} {(t+s)^2\over |y|^{m-2} (t+s+|y|)^{n+2}}\ |f(y)|\,d\mu(y)\, {dt\over t}{ds\over s} \, d\mu(x)\\
&=\overline {\overline E}_{1221}+\overline {\overline E}_{1222}.
\end{align*}
And it is clear that following similar decompositions as in $\overline {\overline E}_{1211}$ and $\overline {\overline E}_{1212}$ above respectively, we have that
\begin{align*}
\overline {\overline E}_{1222}
&\leq   {C\|f\|_{L^\infty(M)}\over \mu(B)} \int_{B^m} \int_{r_{B^m}}^{r_B}\int_{r_B}^\infty {st \over (s+t)^2} \, {dt\over t}{ds\over s} \, d\mu(x)
\leq C\|f\|_{L^\infty(M)}{\mu(B^m)\over \mu(B)} {r_B-r_{B^m}\over r_B} \leq C\|f\|_{L^\infty(M)}
\end{align*}
and that
\begin{align*}
\overline {\overline E}_{1221}
&\leq     {C\|f\|_{L^\infty(M)}\over \mu(B)} \mu(B^m) {r^2_B\over r_{B^m}^2} \leq  {C\|f\|_{L^\infty(M)}\over \mu(B)} r^{m-2}_{B^m} r_B^2 \leq {C\|f\|_{L^\infty(M)}\over \mu(B)} r_B^n \leq C\|f\|_{L^\infty(M)}.
\end{align*}

\begin{align*}
\overline {\overline E}_{122}\leq C\|f\|_{L^\infty(M)}.
 \end{align*}

As a consequence, we obtain that
\begin{align*}
\overline {\overline E}_{12}
\leq C\|f\|_{L^\infty(M)}.
 \end{align*}

Then we consider $E_2$.  Note that
\begin{align*}
E_2&\leq {\|f\|_{L^\infty(M)}\over \mu(B)}\sum_{K_0<k\leq 0} \int_{T_k}  \, \int_{0}^{r_B} \int_0^\infty {st \over (s+t)^2} \int_{\R^m\backslash K} |p_{t+s,2}(x,y)|\, d\mu(y)\, {dt\over t}{ds\over s} \, d\mu(x).
\end{align*}
Recall that for $x\in T_k$ and $y\in\R^m\backslash K$, we have that
\begin{align*}
|p_{t+s,2}(x,y)|&\leq C\frac{(t+s)^2}{(t+s+d(x,y))^{m+2}}
+C\frac{1}{t^n|y|^{m-2}}\frac{(t+s)^2}{(t+s+d(x,y))^{n+2}},
\end{align*}
which gives
\begin{align*}
E_2&\leq {C\|f\|_{L^\infty(M)}\over \mu(B)}\sum_{K_0<k\leq 0} \int_{T_k}  \, \int_{0}^{r_B} \int_0^\infty {st \over (s+t)^2} \int_{\R^m\backslash K} \frac{(t+s)^2}{(t+s+d(x,y))^{m+2}}\, d\mu(y)\, {dt\over t}{ds\over s} \, d\mu(x)\\
&\quad+{C\|f\|_{L^\infty(M)}\over \mu(B)}\sum_{K_0<k\leq 0} \int_{T_k}  \, \int_{0}^{r_B} \int_0^\infty {st \over (s+t)^2} \int_{\R^m\backslash K} \frac{1}{|y|^{m-2}}\frac{(t+s)^2}{(t+s+d(x,y))^{n+2}}\, d\mu(y)\, {dt\over t}{ds\over s} \, d\mu(x)\\
&=:E_{21}+E_{22}.
\end{align*}
We first consider $E_{21}$. We write
\begin{align*}
E_{21}&\leq {C\|f\|_{L^\infty(M)}\over \mu(B)}\sum_{K_0<k\leq 0} \int_{T_k}  \, \int_{0}^{r_B} \int_0^{r_B} {st \over (s+t)^2} \int_{\R^m\backslash K} \frac{(t+s)^2}{(t+s+d(x,y))^{m+2}}\, d\mu(y)\, {dt\over t}{ds\over s} \, d\mu(x)\\
&\quad+{C\|f\|_{L^\infty(M)}\over \mu(B)}\sum_{K_0<k\leq 0} \int_{T_k}  \, \int_{0}^{r_B} \int_{r_B}^\infty {st \over (s+t)^2} \int_{\R^m\backslash K} \frac{(t+s)^2}{(t+s+d(x,y))^{m+2}}\, d\mu(y)\, {dt\over t}{ds\over s} \, d\mu(x)\\
&=:E_{211}+E_{212}.
\end{align*}
For the term $E_{212}$,
since
$$ \int_{\R^m\backslash K} \frac{(t+s)^2}{(t+s+d(x,y))^{m+2}}\, d\mu(y) \leq C,$$ we obtain that
\begin{align*}
E_{212}&\leq {C\|f\|_{L^\infty(M)}\over \mu(B)}\sum_{K_0<k\leq 0} \int_{T_k}  \, \int_{0}^{r_B} \int_{r_B}^\infty {st \over (s+t)^2} \, {dt\over t}{ds\over s} \, d\mu(x)\\
&\leq {C\|f\|_{L^\infty(M)}\over \mu(B)}\sum_{K_0<k\leq 0} \mu(T_k) \\
&\leq {C\|f\|_{L^\infty(M)}\over \mu(B)}\sum_{K_0<k\leq 0} 2^{kn}\mu(B) \\
&\leq C\|f\|_{L^\infty(M)}.
\end{align*}
For the term $E_{211}$,   we obtain that
\begin{align*}
E_{211}&\leq {C\|f\|_{L^\infty(M)}\over \mu(B)}\sum_{K_0<k\leq 0} \int_{T_k}  \, \int_{0}^{r_B} \int_0^{r_B} \int_{\R^m\backslash K} \frac{1}{d(x,y)^{m+2}}\, d\mu(y)\, {dt}{ds} \, d\mu(x)\\
&\leq {C\|f\|_{L^\infty(M)}\over \mu(B)}\cdot \sum_{K_0<k\leq 0}  {1\over (2^kr_B)^2} \int_{T_k}  \, \int_{0}^{r_B} \int_0^{r_B}   {dt}{ds} \, d\mu(x)\\
&\leq {C\|f\|_{L^\infty(M)}\over \mu(B)}\sum_{K_0<k\leq 0} {1\over 2^{2k}} \mu(T_k) \\
&\leq {C\|f\|_{L^\infty(M)}\over \mu(B)}\sum_{K_0<k\leq 0} 2^{k(n-2)}\mu(B) \\
&\leq C\|f\|_{L^\infty(M)},
\end{align*}
where the second inequality follows from the fact that $x\in T_k$ and from decomposing $\R^m\backslash K$
into annuli according to the scale of $2^kr_B$.

The term $E_{22}$ can be handled by using similar approach and hence combing all the cases above we obtain that
$E_2\leq C\|f\|_{L^\infty(M)}$.  Combining with the estimate for $E_1$, we obtain that
$E\leq C\|f\|_{L^\infty(M)}$.

We now consider the term $F$ as defined in \eqref{B12}. We again write
$f_2(x)=f_{21}(x)+f_{22}(x)$ with $f_{21}(x)= f_2(x)\cdot \chi_{4B}(x)$ and $f_{22}(x)= f_2(x)\cdot \chi_{(4B)^c}(x)$.
\begin{align*}
F&\leq {1\over \mu(B)} \int_B \big| \big( I-e^{-r_B\sqrt{L}}   \big)\tilde m(\sqrt{L})f_{21}(x) \big|\, d\mu(x)\\
&\quad+{1\over \mu(B)} \int_B \big| \big( I-e^{-r_B\sqrt{L}}   \big)\tilde m(\sqrt{L})f_{22}(x) \big|\, d\mu(x).
\end{align*}
We point out that the estimate for $F$ can be handled by following the same approach and techniques of those for the term $E$ in {\bf Case 1}. For the detail we omit here.

Combining all the estimates of $E$ and $F$ we obtain that \eqref{B1} holds in {\bf Case 2}. Hence  \eqref{B1} holds.

\medskip

We now consider \eqref{B0}. Recall that for every $B\in \mathcal{B}_0^2$, we have that $B$ is centered in  $\Rf^n$
with $r_B\geq 2, K\subset B,  r_B^{\frac{n-1}{m-1}}<r_{B^m}<r_B \}.
$

We now decompose $B$ following the same way as in {\bf Case 2} of \eqref{B1} above,

Then we have that the left-hand side of \eqref{B0} is bounded by
\begin{align*}
&{1\over \log r_B\ \mu(B)} \int_B \big| \big( I-e^{-r_B\sqrt{L}}   \big)\tilde m(\sqrt{L})f_1(x) \big|\, d\mu(x)+{1\over \log r_B\ \mu(B)} \int_B \big| \big( I-e^{-r_B\sqrt{L}}   \big)\tilde m(\sqrt{L})f_2(x) \big|\, d\mu(x)\\
&=:\tilde E+\tilde F.
\end{align*}
with $f_1$ and $f_2$ the same as in \eqref{f1f2}. It is clear that for the term $\tilde F$, by using the same estimate as that for $F$ in {\bf Case 2} above and using the fact that $r_B\geq2$, we obtain that $\tilde F\leq C\|f\|_{L^\infty(M)}$.

For the term $\tilde E$, we further decompose it as
\begin{align*}
\tilde E&\leq{1\over \log r_B\ \mu(B)} \int_{B^m} \big| \big( I-e^{-r_B\sqrt{L}}   \big)\tilde m(\sqrt{L})f_1(x) \big|\, d\mu(x)\\
&\quad+ \sum_{K_0<k\leq 0}{1\over \log r_B\ \mu(B)} \int_{T_k} \big| \big( I-e^{-r_B\sqrt{L}}   \big)\tilde m(\sqrt{L})f_1(x) \big|\, d\mu(x)\\
&=\tilde E_1+\tilde E_2.
\end{align*}
Again, for the term $\tilde E_2$, by using the same estimate as that for $E_2$ in {\bf Case 2} above and using the fact that $r_B\geq2$, we obtain that $\tilde E_2\leq C\|f\|_{L^\infty(M)}$.

For the term $\tilde E_1$, following the estimate for $E_1$ in {\bf Case 2}, we denote the radius of $B^m$ by $r_{B^m}$, then we further control it by $\tilde E_{11}+\tilde E_{12}$. Again, the term $\tilde E_{11}$ can be estimate by using the same approach as for $E_{11}$ in  {\bf Case 2} of \eqref{B1} above.

For the term  $\tilde E_{12}$, we further control it as
\begin{align*}
\overline {\overline {\tilde E}}_{12}
&\leq  {C\|f\|_{L^\infty(M)}\over \log r_B\mu(B)} \int_{B^m} \int_{r_{B^m}}^{r_B}\int_0^\infty {st \over (s+t)^2} \int_{\R^m\cap (4B^m)^c}  {(t+s)^2\over (t+s+d(x,y))^{m+2}}\ \,d\mu(y)\, {dt\over t}{ds\over s} \, d\mu(x)\\
&\quad+ {C\|f\|_{L^\infty(M)}\over \log r_B\mu(B)} \int_{B^m} \int_{r_{B^m}}^{r_B}\int_0^\infty {st \over (s+t)^2} \int_{\R^m\cap (4B^m)^c} {(t+s)^2\over |y|^{m-2} (t+s+|y|)^{n+2}}\ \,d\mu(y)\, {dt\over t}{ds\over s} \, d\mu(x)\\
&=\overline {\overline {\tilde E}}_{121}+\overline {\overline{\tilde  E}}_{122}.
\end{align*}
We first consider $\overline {\overline {\tilde E}}_{121}$.
By noting that there exists a positive constant $C$ such that for every $s,t\in (0,\infty)$,
$$\int_{\R^m\cap (4B^m)^c}  {(t+s)^2\over (t+s+d(x,y))^{m+2}}\,d\mu(y) \leq C,$$
we have
\begin{align*}
\overline {\overline {\tilde E}}_{121}
&\leq  {C\|f\|_{L^\infty(M)}\over \log r_B\mu(B)} \int_{B^m} \int_{r_{B^m}}^{r_B}\int_0^\infty {st \over (s+t)^2} \, {dt\over t}{ds\over s} \, d\mu(x)\\
&\leq  {C\|f\|_{L^\infty(M)}\over\log r_B \mu(B)} \int_{B^m} \int_{r_{B^m}}^{r_B} {ds\over s} \, d\mu(x)\\
&\leq C\|f\|_{L^\infty(M)} { \mu(B^m)\over \log r_B\mu(B)   } \log\bigg( {r_B\over r_{B^m}} \bigg)\\
&\leq C\|f\|_{L^\infty(M)},
\end{align*}
where the second inequality follows from the direct calculation via splitting the integration $\int_0^\infty = \int_0^s+\int_s^\infty$ and the last inequality follows from the condition that  $r_{B^m}$ has a positive lower bound $r_B^{\frac{n-1}{m-1}}$.

For the term $\overline {\overline {\tilde E}}_{122}$, again,
by noting that there exists a positive constant $C$ such that for every $s,t\in (0,\infty)$,
$$\int_{\R^m\cap (4B^m)^c}  {(t+s)^2\over |y|^{m-2} (t+s+|y|)^{n+2}}\,d\mu(y) \leq C,$$
and following the same estimate as that for $\overline {\overline {\tilde E}}_{121}$ above, we have that
\begin{align*}
\overline {\overline {\tilde E}}_{122}\leq C\|f\|_{L^\infty(M)}.
\end{align*}

As a consequence, we obtain that
\begin{align*}
\tilde E_{12}
\leq C\|f\|_{L^\infty(M)}.
 \end{align*}

Combining all the estimates of $\tilde E$ and $\tilde F$ we obtain that \eqref{B0} holds.
The proof of Theorem \ref{main3} is complete.
\end{proof}

\bigskip
\noindent
{\bf Acknowledgements}: P. Chen was supported by NNSF of China 11501583, Guangdong Natural Science Foundation
 2016A030313351 and the Fundamental Research Funds for the Central Universities 161gpy45.
Duong and Li were supported by the Australian Research Council (ARC) through the
research grant DP160100153 and by Macquarie University Research Seeding Grant. L. Song is supported in part by
 Guangdong Natural Science Funds for Distinguished Young Scholar
(No. 2016A030306040) and  NNSF of China (Nos 11471338 and 11622113).
 L.X. Yan was supported by the NNSF
of China, Grant No.  ~11521101, and Guangdong Special Support Program.

\bigskip

\end{document}